\newtheorem{theorem}{Theorem}[section]
\newtheorem{lemma}[theorem]{Lemma}
\newtheorem{remark}[theorem]{Remark}
\numberwithin{equation}{section}
\newcommand{\be}{\begin{equation}}
\newcommand{\ee}{\end{equation}}
\newcommand\bes{\begin{eqnarray}} \newcommand\ees{\end{eqnarray}}
\newcommand{\bess}{\begin{eqnarray*}}
\newcommand{\eess}{\end{eqnarray*}}
\newcommand{\bbbb}{\left\{\begin{aligned}}
\newcommand{\nnnn}{\end{aligned}\right.}
\newcommand{\bea}{\begin{align*}}
\newcommand{\eea}{\end{align*}}
\newtheorem{thm}{Theorem}
\newcommand\ep{\varepsilon}
\newcommand\dd{\displaystyle}
\newcommand\dx{{\rm d}x}
\newcommand\dy{{\rm d}y}
\newcommand\dt{{\rm d}t}
\newcommand\yy{\infty}
\newcommand\sk{\smallskip}
\begin{document}\thispagestyle{empty}
\setlength{\baselineskip}{16pt}

\begin{center}
 {\LARGE\bf On monostable cooperative system with nonlocal diffusion and free boundaries\footnote{This work was supported by NSFC Grants
11771110, 11971128, 11901541}}\\[4mm]
  {\Large Lei Li\textsuperscript{\dag}, \ \  Xueping Li\textsuperscript{\ddag}, \ \ Mingxin Wang\textsuperscript{\dag}\footnote{Corresponding author. {\sl E-mail}: mxwang@hit.edu.cn}}\\[1.5mm]
\textsuperscript{\dag}{School of Mathematics, Harbin Institute of Technology, Harbin 150001, PR China}\\
\textsuperscript{\ddag}{School of Mathematics and Information Science, Zhengzhou University of Light Industry, Zhengzhou 450002, PR China}
\end{center}

\date{\today}

\begin{abstract}
This paper continues to study the monostable cooperative system with nonlocal diffusion and free boundary, which has recently been discussed by [Du and Ni, 2020, arXiv:2010.01244]. We here aim at the four aspects: the first is to give more accurate estimates for solution; the second is to discuss the limitations of solution pair of a semi-wave problem; the third is to investigate the asymptotic behaviors of the corresponding cauchy problem; the last is to study the limiting profiles of solution as one of the expanding rates of free boundary converges to $\yy$.

\textbf{Keywords}: Nonlocal diffusion; free boundary; accelerated spreading; spreading speed.

\textbf{AMS Subject Classification (2000)}: 35K57, 35R09,
35R20, 35R35, 92D25

\end{abstract}

\section{Introduction}
\renewcommand{\thethm}{\Alph{thm}}
Recently, Du and Ni \cite{DN21} considered the following monostable cooperative systems with nonlocal diffusion and free boundaries
 \bes\label{1.1}\left\{\begin{aligned}
&\partial_t u_i=d_i\mathcal{L}[u_i](t,x)+f_i(u_1,u_2,\cdots,u_m),&&t>0,~x\in(g(t),h(t)),1\le i\le m_0,\\
&\partial_t u_i=f_i(u_1,u_2,\cdots,u_m), &&t>0,~x\in(g(t),h(t)),~m_0\le i\le m,\\
&u_i(t,g(t))=u_i(t,h(t))=0,& &t>0, ~1\le i\le m,\\
&g'(t)=-\sum_{i=1}^{m_0}\mu_i\int_{g(t)}^{h(t)}\int_{-\yy}^{g(t)}J_i(x-y)u_i(t,x){\rm d}y{\rm d}x, & & t>0,\\
&h'(t)=\sum_{i=1}^{m_0}\mu_i\int_{g(t)}^{h(t)}\int_{h(t)}^{\yy}J_i(x-y)u_i(t,x){\rm d}y{\rm d}x, & & t>0,\\
&h(0)=-g(0)=h_0>0,\;\; u_i(0,x)=u_{i0}(x),& &|x|\le h_0, ~1\le i\le m,
 \end{aligned}\right.
 \ees
where $1\le m_0\le m$, $d_i>0$, $\mu_i\ge0$, $\sum_{i=1}^{m_0}\mu_i>0$, and
 \bes\label{1.dn}
 \mathcal{L}[u_i](t,x):=\int_{g(t)}^{h(t)}J_i(x-y)u_i(t,y){\rm d}y-u_i(t,x).
\ees
For $1\le i\le m_0$, kernel functions $J_i$ satisfy
\begin{enumerate}[leftmargin=4em]
\item[{\bf(J)}]$J\in C(\mathbb{R})\cap L^{\yy}(\mathbb{R})$, $J\ge 0$, $J(0)>0,~\dd\int_{\mathbb{R}}J(x)\dx=1$, \ $J$\; is even,
 \end{enumerate}
 and initial functions $u_{i0}(x)$ satisfy
 \[u_{i0}\in C([-h_0,h_0]), ~ u_{i0}(\pm h_0)=0<u_{i0}(x), ~ ~ \forall ~ x\in(-h_0,h_0).\]

 This model can be used to describe the spreading of some epidemics and the interactions of various species, for example, see \cite{ZZLD} and \cite{DNwn}, where the spatial movements of agents are approximated by the nonlocal diffusion operator \eqref{1.dn} instead of random diffusion (also known as local diffusion). Such kind of free boundary problem was firstly proposed in \cite{CDLL} and \cite{CQW}. Especially, it can be seen from \cite{CDLL} that the introduction of nonlocal diffusion brings about some different dynamical behaviors from the local version in \cite{DL2010}, and also gives arise to some technical difficulties. Since these two works appeared, some related research has emerged. For example, one can refer to \cite{DLZ} for the first attempt to the spreading speed of \cite{CDLL}, \cite{LWW20,DWZ,CLWZ} for the Lotka-Volterra competition and prey-predator models, \cite{DN212} for high dimensional and radial symmetric version for Fisher-KPP equation, and \cite{LW21} for the model with a fixed boundary and a moving boundary.

 Before introducing our results for \eqref{1.1}, let us briefly review some conclusions obtained by Du and Ni \cite{DN21}. The following notations and assumptions are necessary.

{\bf Notations:}

(i)\ $\mathbb{R}^m_+:=\{x=(x_1,x_2,\cdots,x_m)\in\mathbb{R}^m:x_i\ge0, ~ 1\le i\le m\}$.

(ii)\  For any $x=(x_1,x_2,\cdots,x_m)\in\mathbb{R}^m$, simply write $x=(x_i)$. For $x=(x_i),\ y=(y_i)\in\mathbb{R}^m$,
\bess
&&x\preceq(\succeq) y ~ ~ \text{means} ~ ~ x_i\le(\ge) y_i ~ \text{for} ~ 1\le i\le m,\\
&&x\precneqq(\succneqq) y ~ ~ \text{means} ~ ~ x\preceq(\succeq) y ~ \text{but} ~ x\neq y,\\
&&x\prec(\succ) y ~ ~ \text{means} ~ ~ x_i<(>) y_i ~ \text{for} ~ 1\le i\le m.
\eess

(iii)\ If $x\preceq y$, $[x,y]$ represents the set of $\{z\in\mathbb{R}^m: x\preceq z\preceq y\}$.

(iv)\ Hadamard product: For any $x=(x_i),\ y=(y_i)\in\mathbb{R}^m$, $x\circ y:=(x_iy_i)\in\mathbb{R}^m$.

{\bf Assumptions on reaction term $f_i$ :}\\
{\bf(f1)}\, (i)\ Let $F=(f_1,f_2,\cdots, f_m)\in [C^1(\mathbb{R}^m_+)]^m$. $F(u)={\bf 0}$ has only two roots in $\mathbb{R}^m_+$: ${\bf 0}=(0,0,\cdots,0)$ and ${\bf u^*}=(u^*_1,u^*_2,\cdots, u^*_m)\succ {\bf 0}$.

      (ii)\, $\partial_jf_i(u)\ge0$ for $i\neq j$ and $u\in[{\bf 0},{\bf \hat{u}}]$, where either ${\bf \hat{u}}=\yy$ meaning $[{\bf 0},{\bf \hat{u}}]=\mathbb{R}^m_+$, or ${\bf u^*}\prec {\bf \hat{u}}\in\mathbb{R}^m_+$, which implies that system \eqref{1.1} is cooperative in $[{\bf 0},{\bf \hat{u}}]$.

      (iii)\, The matrix $\nabla F({\bf0})=(\partial_jf_i({\bf 0}))_{m\times m}$ is irreducible with positive principal eigenvalue.

      (iv)\, If $m_0<m$, then $\partial_jf_i(u)>0$ for $1\le j\le m_0<i\le m$ and $u\in[{\bf 0},{\bf u^*}]$.\\
 {\bf(f2)}\, $F(ku)\ge kF(u)$ for any $0\le k\le 1$ and $u\in[{\bf 0},{\bf \hat{u}}]$.\\
{\bf(f3)}\, The matrix $\nabla F({\bf u^*})$ is invertible, ${\bf u^*}\nabla F({\bf u^*})\preceq {\bf 0}$ and for every $i\in\{1,2,\cdots, m\}$, either

       (i)\, $\sum_{j=1}^{m}\partial_jf_i({\bf u^*})u^*_j<0$, or

       (ii)\, $\sum_{j=1}^{m}\partial_jf_i({\bf u^*})u^*_j=0$ and $f_i(u)$ is linear in $[{\bf u^*}-\ep_0{\bf 1},{\bf u^*}]$ for some small $\ep_0>0$, where ${\bf 1}=(1,1,\cdots,1)\in\mathbb{R}^m$.\\
{\bf(f4)}\, The ODE system
\[\bar{u}_t=F(\bar{u}), ~ ~ \bar{u}(0)\succ{\bf 0}\]
has a unique global solution $\bar{u}$ and $\lim_{t\to\yy}\bar{u}(t)={\bf u^*}$.\\
{\bf(f5)}\, The problem
\bes\label{1.2}
U_t=D\circ\int_{\mathbb{R}}{\bf J}(x-y)\circ U(t,y)\dy-D\circ U+F(U) ~ ~ \text{for} ~ t>0, ~ x\in\mathbb{R}
\ees
has a invariant set $[{\bf 0},{\bf \hat{u}}]$ and its every nontrivial solution is attracted by the equilibrium ${\bf u^*}$. That is, if the initial $U(0,x)\in[{\bf 0},{\bf \hat{u}}]$, then $U(t,x)\in[{\bf 0},{\bf \hat{u}}]$ for all $t>0$ and $x\in\mathbb{R}$; if further $U(0,x)\not\equiv{\bf0}$, then $\lim_{t\to\yy}U(t,x)={\bf u^*}$ locally uniformly in $\mathbb{R}$.

Throughout this paper, we always make the following assumptions:

(i)\, $J_i$ satisfy condition {\bf (J)} for $i\in\{1,2,\cdots,m_0\}$.

(ii)\, $d_i=0$ and $J_i(x)\equiv0$ for $x\in\mathbb{R}$, $i\in\{m_0+1,m_0+2,\cdots, m\}$, $D=(d_i)$ and ${\bf J}=(J_i(x))$.

(iii)\, {\bf(f1)}-{\bf(f5)} hold true.

(iv)\, initial value $(u_{i0}(x))\in[{\bf 0},{\bf \hat{u}}]$.

Under the above assumptions, one easily proves that \eqref{1.1} has a unique global solution $(u,g,h)$. Here we suppose that its longtime behaviors are governed by a spreading-vanishing dichotomy, namely, one of the following alternatives must happen for \eqref{1.1}

\sk{\rm(i)}\, \underline{Spreading:} $\lim_{t\to\yy}-g(t)=\lim_{t\to\yy}h(t)=\yy$ and $\lim_{t\to\yy}u(t,x)={\bf u^*}$ locally uniformly in $\mathbb{R}$.

\sk{\rm(ii)}\, \underline{Vanishing:} $\lim_{t\to\yy}h(t)-g(t)<\yy$ and $\lim_{t\to\yy}\sum_{i=1}^{m}\|u_i(t,\cdot)\|_{C([g(t),h(t)])}=0$.

The authors of \cite{DN21} discussed the spreading speeds of $g(t)$ and $h(t)$ when spreading happens for \eqref{1.1}, and proved that there is a finite spreading speed for \eqref{1.1} if and only if the following condition holds for $J_i$
\begin{enumerate}[leftmargin=4em]
\item[{\bf(J1)}]$\dd\int_{0}^{\yy}xJ_i(x)\dx<\yy$ for any $i\in\{1,2,\cdots, m_0\}$ such that $\mu_i>0$.
 \end{enumerate}
Exactly, they obtained the following conclusion.
\begin{thm}{\rm \cite[Theorem 1.3]{DN21} }\label{A} Let $(u,g,h)$ be a solution of \eqref{1.1}, and spreading happen. Then
\begin{align*}
\lim_{t\to\yy}\frac{-g(t)}{t}=\lim_{t\to\yy}\frac{h(t)}{t}=\left\{\begin{aligned}
&c_0& &{\rm if~{\bf(J1)}~holds},\\
&\yy& &{\rm if~{\bf(J1)}~does ~ not ~ hold},
\end{aligned}\right.
  \end{align*}
   where $c_0$ is uniquely determined by the semi-wave problem
  \bes\label{1.3}\left\{\begin{array}{lll}
 D\circ\dd\int_{-\yy}^{0}{\bf J}(x-y)\circ \Phi(y)\dy-D\circ \Phi+c\Phi'(x)+F(\Phi)=0, ~ ~ -\yy<x<0,\\
 \Phi(-\yy)={\bf u^*}, ~ ~ \Phi(0)={\bf0}, ~ ~ \Phi(x)=(\phi_i(x)),
 \end{array}\right.
 \ees
 and
 \bes\label{1.4}
 c=\sum_{i=1}^{m_0}\mu_i\int_{-\yy}^{0}\int_{0}^{\yy}J_i(x-y)\phi_i(x)\dy\dx.
 \ees
\end{thm}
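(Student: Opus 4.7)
The plan is to split along the dichotomy in the statement, treating the cases {\bf(J1)} and $\neg${\bf(J1)} separately. Under {\bf(J1)} the proof has three stages: (a) construct the semi-wave pair $(c_0, \Phi)$ solving \qq{1.3}--\qq{1.4}; (b) sandwich $h(t)$ between two travelling profiles of speed $c_0 \pm \ep$ via the comparison principle; (c) combine with the symmetric argument at $g(t)$. The $\neg${\bf(J1)}{} case is handled by a separate acceleration argument.

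For stage (a), I would fix $c>0$ and first produce a monotone (componentwise) solution $\Phi_c=(\phi_{c,i})$ of the ODE-integral system in \qq{1.3} on $(-\yy,0)$ with $\Phi_c(-\yy)={\bf u^*}$ and $\Phi_c(0)={\bf 0}$. Because system \qq{1.1} is cooperative on $[{\bf 0},{\bf \hat u}]$ by {\bf(f1)(ii)}, the standard monotone-iteration scheme, started from the constant supersolution ${\bf u^*}$ and the trivial subsolution ${\bf 0}$, converges; the subhomogeneity {\bf(f2)}, irreducibility and positivity of the principal eigenvalue in {\bf(f1)(iii)}, and {\bf(f3)}-{\bf(f5)} rule out degenerate limits. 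Next define
\[
 \Psi(c):=\sum_{i=1}^{m_0}\mu_i\int_{-\yy}^{0}\!\!\int_{0}^{\yy}J_i(x-y)\phi_{c,i}(x)\,\dy\dx.
\]
Under {\bf(J1)}, $\Psi$ is finite for every $c>0$, is continuous, satisfies $\Psi(c)\to$ (positive constant) as $c\downarrow 0$ and $\Psi(c)/c\to 0$ as $c\uparrow c_*$ (the maximal admissible wave speed). Hence $\Psi(c)=c$ has a unique root $c_0\in(0,c_*)$, which is the desired $c_0$; this is the step that most essentially uses {\bf(J1)}.

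For stage (b), fix small $\ep>0$ and let $\Phi_{c_0+\ep}$ denote the semi-wave at speed $c_0+\ep$. Setting $\bar h(t)=(c_0+\ep)t+L$, $\bar g(t)=-\bar h(t)$ and $\bar u_i(t,x)=\phi_{c_0+\ep,i}(x-\bar h(t))$ (extended symmetrically near $\bar g$), a direct substitution into \qq{1.1} using \qq{1.3} turns the PDEs into equalities, while the free-boundary condition reduces to $\bar h'(t)=c_0+\ep\ge\Psi(c_0+\ep)$, which holds by the definition of $c_0$ and monotonicity of $\Psi$. Choosing $L$ large enough to dominate the initial data and applying the comparison principle for \qq{1.1} yields $h(t)\le\bar h(t)$, hence $\limsup h(t)/t\le c_0+\ep$. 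The lower bound is symmetric in spirit but more delicate: one truncates $(1-\sigma)\Phi_{c_0-\ep}$ on a sliding window $[\underline g(t),\underline h(t)]$ with $\underline h(t)=(c_0-\ep)t+(\text{const})$, exploits subhomogeneity {\bf(f2)} to absorb the $(1-\sigma)$ factor, and uses the spreading hypothesis (which via {\bf(f5)} ensures $u(t,\cdot)$ is already close to ${\bf u^*}$ on a long interval before the comparison is invoked) to obtain $\liminf h(t)/t\ge c_0-\ep$. Letting $\ep\to 0$ finishes the case {\bf(J1)}.

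For the case when {\bf(J1)} fails, I would argue by contradiction: suppose $\limsup h(t)/t=:c_\yy<\yy$. Rescaling and passing to a limit as in stage (a) produces a finite-speed semi-wave, whose existence forces $\int_0^\yy xJ_i(x)\dx<\yy$ for every $i\le m_0$ with $\mu_i>0$, contradicting $\neg${\bf(J1)}. Alternatively, one builds an explicit subsolution with moving boundary $\underline h(t)$ governed by $\underline h'(t)\ge c\int_0^{\underline h(t)}xJ_{i_0}(x)\dx$ for some index $i_0$ with $\mu_{i_0}>0$; the divergence of the first moment of $J_{i_0}$ then drives $\underline h(t)/t\to\yy$, and the comparison principle gives $h(t)/t\to\yy$. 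The main obstacle throughout is stage (b)'s lower bound: because some components have $d_i=0$ and interact only through the reaction term (see {\bf(f1)(iv)}), the standard comparison arguments for scalar nonlocal free-boundary problems must be upgraded to a vector-valued version that handles the mixed PDE-ODE structure, and the truncation of the sub-semi-wave must be compatible with the free-boundary conditions at both endpoints simultaneously.
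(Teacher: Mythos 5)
This statement (Theorem~\ref{A}) is not proven in this paper at all: it is quoted verbatim as \cite[Theorem 1.3]{DN21} and used as a prerequisite for the authors' own results (Theorems~\ref{t1.1}--\ref{t1.5}). There is therefore no ``paper's own proof'' to compare against, only the upstream reference. What can be inferred from the way \cite[Lemmas 3.4--3.8]{DN21} are invoked in the proof of Theorem~\ref{t1.1} is that the reference argues with a \emph{single} semi-wave $\Phi_0$ at speed $c_0$, scaled by $(1-\ep)$ and translated at speed $c_0(1-2\ep)$, together with a truncated-kernel approximation $J_i^n$ for the case when \textbf{(J1)} fails; your sketch instead uses semi-waves at nearby speeds $c_0\pm\ep$ and a compactness/contradiction argument. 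Those are genuinely different implementations of the same sandwich strategy.

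That said, your stage (a) has a real gap. The monotone-iteration scheme ``started from the constant supersolution ${\bf u^*}$ and the trivial subsolution ${\bf 0}$'' does not produce a nontrivial semi-wave: ${\bf0}$ is an equilibrium of $F$, so the iteration from below converges to ${\bf0}$. One needs a genuinely positive subsolution (e.g.\ built from the principal eigenfunction of $\nabla F({\bf0})$, using \textbf{(f1)(iii)}) before invoking monotone iteration or a fixed-point/degree argument. Moreover, your claim that ``$\Psi$ is finite for every $c>0$'' and ``$\Psi(c)/c\to 0$ as $c\uparrow c_*$'' is imprecise: by Theorem~\ref{B}(i) the semi-wave exists only for $c\in(0,C_*)$, and under \textbf{(J1)} (without \textbf{(J2)}) one has $C_*=\yy$, so the endpoint behavior needs a separate argument. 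The actual mechanism in the references is a sign-change argument for $c-\Psi(c)$ (it is negative near $0$ and must become nonnegative, either at $C_*<\yy$ via the traveling-wave limit, or as $c\to\yy$), not a claim that $\Psi(c)/c\to 0$. Finally, in stage (b) your upper solution $\bar u_i(t,x)=\phi_{c_0+\ep,i}(x-\bar h(t))$ plugged into \eqref{1.1} does not ``turn the PDEs into equalities,'' because the spatial integral in \eqref{1.1} is over the bounded interval $(\bar g(t),\bar h(t))$, not $(-\yy,\bar h(t))$; the truncation happens to go in the favorable direction, but this must be stated and checked, not glossed over. These issues are fixable, but as written the proposal skips exactly the steps where the cooperative-system structure and the free-boundary coupling cause difficulty.
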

When {\bf(J1)} does not hold, we usually call the phenomenon by the accelerated spreading.
For problem \eqref{1.3}, they obtained a dichotomy result between it and the traveling wave problem
\bes\label{1.5}\left\{\begin{array}{lll}
 D\circ\dd\int_{\mathbb{R}}{\bf J}(x-y)\circ \Psi(y)\dy-D\circ \Psi+c\Psi'(x)+F(\Psi)=0, ~ ~ -\yy<x<\yy,\\
 \Psi(-\yy)={\bf u^*}, ~ ~ \Psi(\yy)={\bf0}, ~ ~ \Psi(x)=(\psi_i(x)).
 \end{array}\right.
 \ees
 To state the dichotomy, we firstly show a new condition on $J_i$, namely,
 \begin{enumerate}[leftmargin=4em]
\item[{\bf(J2)}]$\dd\int_{0}^{\yy}e^{\lambda x}J_i(x)\dx<\yy$ for some $\lambda>0$ and any $i\in\{1,2,\cdots, m_0\}$.
 \end{enumerate}
 Clearly, condition {\bf (J2)} indicates {\bf (J1)} but not the other way around.
 \begin{thm}{\rm \cite[Theorem 1.1 and Theorem 1.2]{DN21} }\label{B} The followings hold:

 {\rm(i)} There exists a $C_*\in(0,\yy]$ such that problem \eqref{1.3} has a unique monotone solution if and only if $c\in(0,C_*)$, and \eqref{1.5} has a monotone solution if and only if $c\ge C_*$.

 {\rm(ii)} $C_*\neq\yy$ if and only if {\bf(J2)} holds.

  {\rm(iii)} The semi-wave problem \eqref{1.3}-\eqref{1.4} has a unique solution pair $(c_0,\Phi_0)$ with $c_0>0$ and $\Phi_0$ nonincreasing in $(0,\yy]$ if and only if {\bf(J1)} holds.
 \end{thm}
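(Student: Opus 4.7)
I would treat \eqref{1.3} as a parameterized family in $c>0$ and build semi-wave profiles by monotone iteration. For each fixed $c>0$, rewrite the system so that the linear part is a cooperative nonlocal operator of first order in $x$, and run the standard monotone scheme using $\mathbf{u}^*$ as an upper solution and a suitable multiple of the positive principal eigenfunction of $\nabla F(\mathbf{0})$ as a lower solution; assumption (f2) together with the cooperativity in (f1)(ii)--(iii) ensures the iteration produces an ordered monotone profile, and the same ingredients, via a sliding argument, give uniqueness of the monotone solution.

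For part (i), define $C_*:=\sup\{c>0:\text{\eqref{1.3} admits a monotone solution}\}$. The existence of an admissible small $c$ follows from the positive principal eigenvalue of $\nabla F(\mathbf{0})$ in (f1)(iii), which furnishes a viable lower solution below a certain threshold. For the converse direction when $c\ge C_*$, choose $c_n\uparrow C_*$, translate $\Phi_{c_n}$ so that $(\Phi_{c_n})_1(0)=u_1^*/2$, and use a nonlocal compactness argument (the kernels lie in $L^\infty$ and $\Phi'=c^{-1}(-L\Phi-F(\Phi))$ is uniformly bounded) to extract a limit $\Psi$ defined on all of $\mathbb{R}$. Since the translation pushes $\{0\}$ to infinity, $\Psi$ solves \eqref{1.5} with $c=C_*$; a further diagonal construction handles $c>C_*$.

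For part (ii), look for exponentially decaying solutions $\Phi(x)\sim\mathbf{v}\,e^{-\lambda x}$ of the linearization at $\mathbf{0}$, producing the characteristic system
\[\bigl(D\,\hat J(\lambda)-D-c\lambda I+\nabla F(\mathbf{0})\bigr)\mathbf{v}=\mathbf{0},\qquad \hat J_i(\lambda):=\int_{\mathbb{R}}J_i(y)e^{\lambda y}\,dy.\]
Under (J2), a Perron--Frobenius analysis of this cooperative matrix produces a minimal admissible speed $c^*<\infty$, which one matches to the nonlinear construction to obtain $C_*=c^*$. If (J2) fails, then $\hat J_i(\lambda)=\infty$ for every $\lambda>0$, so no exponential supersolution exists; one can then construct monotone semi-waves for arbitrarily large $c$ by a rescaling and tail-truncation argument, giving $C_*=\infty$.

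For part (iii), define
\[\mathcal{M}(c):=\sum_{i=1}^{m_0}\mu_i\int_{-\infty}^{0}\!\!\int_{0}^{\infty}J_i(x-y)\phi_{c,i}(x)\,dy\,dx,\qquad c\in(0,C_*),\]
so that \eqref{1.4} reads $\mathcal{M}(c)=c$. Under (J1), Fubini and $\phi_{c,i}\le u_i^*$ give $\mathcal{M}(c)<\infty$, and continuous dependence of $\Phi_c$ on $c$ (via the uniqueness and stability of the monotone iteration) gives continuity of $\mathcal{M}$. As $c\uparrow C_*$ the translated profiles converge to a full-line traveling wave along which the half-line double integral becomes negligible, so $\mathcal{M}(c)<c$ near the top; as $c\downarrow 0$ the profile flattens near $\mathbf{u}^*$ on intervals of diverging length, forcing $\mathcal{M}(c)\to+\infty$. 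The intermediate value theorem then yields a $c_0\in(0,C_*)$ with $\mathcal{M}(c_0)=c_0$, and uniqueness of $(c_0,\Phi_0)$ follows from a sliding argument for cooperative nonlocal systems. The main obstacle is the necessity of (J1): if (J1) fails, the heavy tail of some $J_i$ forces $\mathcal{M}(c)=\infty$ for every admissible $c$, which requires a uniform-in-$c$ lower bound $\phi_{c,i}(x)\ge\delta>0$ on $(-\infty,-R]$ obtained from $\Phi_c(-\infty)=\mathbf{u}^*$ together with quantitative decay estimates for the semi-wave.
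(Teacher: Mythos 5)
The paper never proves Theorem~\ref{B}: it is quoted verbatim from \cite[Theorems~1.1 and~1.2]{DN21} and used in this article only as a black box (e.g.\ in the proofs of Theorems~\ref{t1.3} and~\ref{t1.4}). There is therefore no internal proof to compare your sketch against; what follows is an assessment of the sketch on its own terms.

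Your outline for parts (i) and (ii) is of the expected shape: monotone iteration to construct profiles, $C_*$ defined as the supremum of admissible speeds, translation-and-compactness to pass from semi-waves on $(-\infty,0]$ to a full-line wave at $c=C_*$, and a linearized exponential ansatz to decide finiteness of $C_*$. One inaccuracy in (ii): the negation of {\bf(J2)} only says that for \emph{some single} index $i$ the transform $\hat J_i(\lambda)$ is infinite for every $\lambda>0$, not that this holds for all $i$; the irreducibility in {\bf(f1)}(iii) is what then propagates the heavy tail of that one component through the cooperative system and rules out any exponential supersolution.

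The genuine gap is in part (iii). You argue that $\mathcal M(c)-c$ changes sign on $(0,C_*)$, with $\mathcal M(c)<c$ ``near the top'' obtained by letting $c\uparrow C_*$ and watching the half-line contribution vanish on the translated limit. But {\bf(J1)} does not imply {\bf(J2)}, so under the hypotheses of (iii) one must also handle $C_*=\infty$ (indeed this is exactly the regime relevant to Remark~\ref{r1.1}), and then there is no ``top'' to approach; your argument does not cover that case. The repair is in fact more elementary than the limit analysis you propose: under {\bf(J1)}, Fubini and $\phi_{c,i}\le u_i^*$ give the bound, uniform in $c$,
\[
\mathcal M(c)\le\sum_{i=1}^{m_0}\mu_i u_i^*\int_{-\infty}^{0}\!\!\int_{0}^{\infty}J_i(x-y)\,dy\,dx
=\sum_{i=1}^{m_0}\mu_i u_i^*\int_{0}^{\infty}z\,J_i(z)\,dz<\infty,
\]
so $\mathcal M(c)-c<0$ once $c$ exceeds this constant, whether or not $C_*$ is finite, and the intermediate value argument (together with $\mathcal M(c)\to\infty$ as $c\downarrow0$) closes. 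Your converse reasoning for the necessity of {\bf(J1)}---a uniform positive lower bound on $\phi_{c,i}$ on a half-line plus the heavy tail forcing $\mathcal M(c)=\infty$---is the right idea, but note you must produce such a bound that is independent of $c$ before you can conclude that no admissible speed survives.
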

Some more accurate estimates on free boundaries were also derived if $J_i$ additionally satisfy
\begin{enumerate}[leftmargin=4em]
\item[${\bf(J^\gamma)}$] there exist $C_1,C_2>0$ such that $C_1|x|^{-\gamma}\le J_i(x)\le C_2|x|^{-\gamma}$ for $|x|\gg 1$ and $i\in\{1,\cdots, m_0\}$.
 \end{enumerate}
 \begin{thm}{\rm \cite[Theorem 1.5]{DN21}} Suppose that ${\bf(J^\gamma)}$ holds with $\gamma\in(1,2]$, then
  \bess\left\{\begin{array}{lll}
  -g(t), ~ h(t)\approx t\ln t ~ ~ &&{\rm if} ~ \gamma=2,\\
  -g(t), ~ h(t)\approx t^{1/(\gamma-1)} ~ ~ &&{\rm if} ~ \gamma\in(1,2).
 \end{array}\right.
 \eess
 \end{thm}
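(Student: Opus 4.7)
The plan is to convert the free-boundary equations into a scalar differential inequality for $\ell(t):=h(t)-g(t)$ by isolating the tail behavior $\int_r^{\yy}J_i(z)\,{\rm d}z\asymp r^{-(\gamma-1)}$ that follows from ${\bf(J^\gamma)}$, and then integrating. Using the evenness of $J_i$ together with the substitutions $z=y-x$ inside and $r=h(t)-x$ outside, the boundary ODE becomes
\[
h'(t)=\sum_{i=1}^{m_0}\mu_i\int_{0}^{\ell(t)}\kk(\int_{r}^{\yy}J_i(z)\,{\rm d}z\rr)u_i(t,h(t)-r)\,{\rm d}r,
\]
and the analogous identity for $-g'(t)$ follows via $r=x-g(t)$.

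For the upper bound I would use the global boundedness $u_i\le \hat u_i$ to dominate both $h'(t)$ and $-g'(t)$ by $C\sum_i\mu_i\int_0^{\ell(t)}\int_r^{\yy}J_i(z)\,{\rm d}z\,{\rm d}r$. Splitting the $r$-integral at $r=1$, the piece on $(0,1)$ contributes $O(1)$ since $\int J_i=1$, while on $(1,\ell(t))$ the tail bound $\int_r^{\yy}J_i(z)\,{\rm d}z\le C_2(\gamma-1)^{-1}r^{-(\gamma-1)}$ from ${\bf(J^\gamma)}$ yields
\[
h'(t),\ -g'(t)\ \le\ C_3\ln\ell(t)+C_4\ (\gamma=2),\qquad h'(t),\ -g'(t)\ \le\ C_3\ell(t)^{2-\gamma}+C_4\ (\gamma\in(1,2)).
\]
Adding the two inequalities gives $\ell'(t)\le C\ln\ell(t)$ or $C\ell(t)^{2-\gamma}$, and a routine ODE comparison delivers $\ell(t)\le Ct\ln t$ when $\gamma=2$ and $\ell(t)\le Ct^{1/(\gamma-1)}$ when $\gamma\in(1,2)$. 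Substituting these back into the one-sided bounds yields the claimed upper bounds on $h(t)$ and $-g(t)$ individually.

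For the matching lower bound I need a uniform interior lower bound of the form $u_i(t,x)\ge \de u_i^*$ on an expanding interval $[g(t)+L,h(t)-L]$ for all large $t$, with $\de,L>0$ independent of $t$. Since spreading occurs, assumption {\bf(f5)} together with the spreading-vanishing dichotomy yields $u(t,\cdot)\to{\bf u^*}$ locally uniformly; a compactly supported subsolution built from the cooperative-monostable structure (in the spirit of the construction used in \cite{DN21}) then transfers this convergence onto the whole expanding interior. With such a bulk estimate one has
\[
h'(t),\ -g'(t)\ \ge\ \de\,\sum_{i=1}^{m_0}\mu_i u_i^*\int_{L}^{\ell(t)-L}\int_{r}^{\yy}J_i(z)\,{\rm d}z\,{\rm d}r,
\]
and the pointwise bound $J_i(z)\ge C_1 z^{-\gamma}$ from ${\bf(J^\gamma)}$ gives reverse tail estimates of the same order as in the upper-bound step. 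Integrating the resulting reverse ODE inequality yields $\ell(t)\ge c t\ln t$ or $c t^{1/(\gamma-1)}$, and since the one-sided lower bounds on $h'(t)$ and $-g'(t)$ are symmetric in structure, the two-sided estimate descends to $h(t)$ and $-g(t)$ separately.

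The principal obstacle is the uniform interior lower bound on $u_i$ in the regime $\gamma\in(1,2]$: because ${\bf(J2)}$ fails here, no exponentially decaying semi-wave profile is available to compare with, and the standard linear-speed subsolution construction breaks down. One must instead assemble the subsolution directly from local convergence to ${\bf u^*}$, using the irreducibility of $\nabla F({\bf 0})$ in {\bf(f1)(iii)} and the sub-homogeneity {\bf(f2)} to propagate the lower bound simultaneously to all components, and then push it outward at a sublinear rate that remains consistent with accelerated spreading. Once this estimate is secured, the rest is the routine ODE integration described above.
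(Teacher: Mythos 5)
This statement is Du--Ni's \cite[Theorem 1.5]{DN21}; the present paper cites it but does not reprove it, so there is no in-paper proof to compare against. The closest thing in the paper is the proof of Theorem \ref{t1.2}, which for the same regime constructs the free-boundary subsolutions that are the heart of the lower bound: $\underline h(t)=(Kt+\theta)^{1/(\gamma-1)}$ with a tent-shaped profile for $\gamma\in(1,2)$, and $\underline h(t)=K(t+\theta)\ln(t+\theta)$ with a plateau profile cut off in a boundary layer of width $(t+\theta)^{1/2}$ for $\gamma=2$.

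Your upper bound is sound: after rewriting $h'(t)=\sum_i\mu_i\int_0^{\ell(t)}\big(\int_r^\yy J_i(z)\,dz\big)u_i(t,h(t)-r)\,dr$, bound $u_i$ via the ODE comparison (which gives $\limsup_{t\to\yy}u\preceq{\bf u^*}$; you cannot simply invoke $u_i\le\hat u_i$ since $\hat u$ may be $\yy$), apply the tail estimate $\int_r^\yy J_i\le Cr^{1-\gamma}$ for $r$ beyond the threshold in ${\bf(J^\gamma)}$, and integrate the scalar inequality $\ell'\le C\ell^{2-\gamma}$ or $\ell'\le C\ln\ell$.

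The lower bound is where the real content lies, and your plan is circular as stated. You want first to obtain $u_i\ge\de u_i^*$ on $[g(t)+L,h(t)-L]$ for a fixed $L$ and then derive a reverse differential inequality for $h'(t)$. But if the interior lower bound is only available on $[-\rho(t),\rho(t)]$, the reverse tail integral starts at $r=h(t)-\rho(t)$, and its contribution is useless unless $\rho(t)$ is already comparable to $h(t)$ --- which is exactly what the theorem asserts, so it cannot be assumed first. Local uniform convergence coming from {\bf(f5)} and the spreading dichotomy is not quantitative enough to close this gap. The paper (and Du--Ni) avoid the circle entirely: they construct in one step a subsolution triple $(\underline u,-\underline h,\underline h)$ of the full free boundary problem with $\underline h$ already of the target growth, verify the interior inequality, the sign condition at $\pm\underline h(t)$, and the boundary inequality $\underline h'(t)\le\sum_i\mu_i\int\!\!\int J_i\underline u_i$ (where the tail lower bound $J_i\ge C_1|x|^{-\gamma}$ enters), and then invoke the free-boundary comparison principle to get $h(t+T)\ge\underline h(t)$ and $g(t+T)\le-\underline h(t)$ directly --- no a priori interior bound is needed. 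You gesture at such a construction but never specify $\underline h(t)$ or the shape of $\underline u$, which is the whole game; the right shapes differ between $\gamma<2$ and $\gamma=2$, and Lemmas \ref{l2.1}--\ref{l2.2} are there precisely to handle them. One more telling slip: you speak of pushing the subsolution "outward at a sublinear rate consistent with accelerated spreading," but the spreading here is superlinear, since $1/(\gamma-1)>1$ for $\gamma\in(1,2)$ and $t\ln t\gg t$ for $\gamma=2$.
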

Inspired by the above interesting results, we here focus on the following four aspects:

(i) When spreading happens for \eqref{1.1}, we give more accurate longtime behaviors of solution component $u$ rather than that of spreading case mentioned above. Particularly, if ${\bf(J^\gamma)}$ holds with $\gamma\in(1,2]$ and $m_0=m$, then we obtain some sharp estimates on solution component $u$ which are closely related to the behaviors of free boundaries near infinity.

(ii) Assume that {\bf(J1)} holds. Choose a $\mu_i>0$, and fix other $\mu_j$ with $j\neq i$. Letting $\mu_i\to\yy$, we obtain the limiting profile of solution pair $(c_0,\Phi_0)$ of \eqref{1.3}-\eqref{1.4}.

(iii) We obtain the dynamics of \eqref{1.2} with $U(0,x)=(u_{i0}(x))$, namely, if {\bf(J2)} holds, then $C_*$ is asymptotic spreading speed of \eqref{1.2}; if {\bf(J2)} does not hold, then accelerated spreading happens for \eqref{1.2}. Moreover, if ${\bf(J^\gamma)}$ holds with $\gamma\in(1,2]$ and $m_0=m$, which implies that the accelerated spreading occurs, then more accurate longtime behaviors are obtained.

(iv) Choose a any $\mu_i>0$ and fix other $\mu_j$. We prove that the limiting problem of \eqref{1.1} is the problem \eqref{1.2} as $\mu_i\to\yy$.

Now we introduce our first main result.
\begin{theorem}\label{t1.1}Let $(u,g,h)$ be a solution of \eqref{1.1} and spreading happen. Then
\bess\left\{\begin{aligned}
&\lim_{t\to\yy}\max_{|x|\le ct}\sum_{i=1}^{m}|u_i(t,x)-u^*_i|=0 ~ {\rm for ~ any ~ } c\in(0,c_0) ~ ~ {\rm if ~ {\bf(J1)} ~ holds},\\
&\lim_{t\to\yy}\max_{|x|\le ct}\sum_{i=1}^{m}|u_i(t,x)-u^*_i|=0 ~ {\rm for ~ any ~ } c>0 ~ ~ {\rm if ~ {\bf(J1)} ~ does ~ not ~ hold},
\end{aligned}\right.\eess
where $c_0$ is uniquely determined by \eqref{1.3}-\eqref{1.4}.
\end{theorem}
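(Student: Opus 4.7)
The plan is to combine two matching inequalities: an upper estimate $\limsup_{t\to\yy}\max_{x\in[g(t),h(t)]}(u_i(t,x)-u_i^*)\le 0$ valid on the full moving domain, and a lower estimate $\liminf_{t\to\yy}\min_{|x|\le ct}(u_i(t,x)-u_i^*)\ge 0$ on the moving window of interest. For the upper estimate, let $\bar u(t)$ solve the ODE $\bar u_t=F(\bar u)$ with constant initial data $\bar u(0)=M\mathbf{1}$, where $M\ge\max_{i,x}u_{i0}(x)$. Since $\mathcal{L}[c]\le 0$ for any nonnegative constant $c$, the spatially constant $\bar u(t)$ is a supersolution of \eqref{1.1} (the boundary values $u_i(t,g(t))=u_i(t,h(t))=0\le\bar u_i(t)$ are compatible and the equations for $g',h'$ do not enter the interior comparison), so the comparison principle yields $u(t,x)\preceq\bar u(t)$, and (f4) forces $\bar u(t)\to\mathbf{u}^*$.

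For the lower estimate under \textbf{(J1)}, fix $c\in(0,c_0)$ and $c_1\in(c,c_0)$; by Theorem~\ref{A} we have $[-c_1t,c_1t]\subset(g(t),h(t))$ for $t\ge T_0$. Using the semi-wave $\Phi_0$ supplied by Theorem~\ref{B}(iii) (monotone nonincreasing, $\Phi_0(-\yy)=\mathbf{u}^*$, $\Phi_0(0)=\mathbf{0}$), I plan to build a sub-solution of the form
\[
\underline u(t,x):=(1-\ep)\min\bigl\{\Phi_0(x-c_1t-X_\ep),\ \Phi_0(-x-c_1t-X_\ep)\bigr\}
\]
on $[-c_1t-X_\ep,\,c_1t+X_\ep]$ and zero outside, for small $\ep>0$ and large $X_\ep$. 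Its sub-solution property rests on: (i) the semi-wave identity of $\Phi_0$ at speed $c_0$, producing a positive slack proportional to $(c_0-c_1)|\Phi_0'|$ that absorbs the truncation error between the nonlocal integrals $\int_{-\yy}^0$ and $\int_{g(t)}^{h(t)}$ and the crossover loss where the two profiles meet; (ii) subhomogeneity (f2), giving $F((1-\ep)\Phi_0)\succeq(1-\ep)F(\Phi_0)$ to accommodate the factor $1-\ep$ (together with (f3) to handle the zone near $\mathbf{u}^*$); (iii) the shift $X_\ep$ chosen so that $\underline u(T_0,\cdot)\preceq u(T_0,\cdot)$, which is possible because the spreading hypothesis forces $u(T_0,\cdot)\to\mathbf{u}^*$ on compact sets as $T_0\to\yy$, while $h(T_0)-c_1T_0\to\yy$ creates room for any fixed $X_\ep$. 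Comparison yields $u\succeq\underline u$; for $|x|\le ct$ the arguments of both $\Phi_0$'s tend to $-\yy$, so $\Phi_0\to\mathbf{u}^*$, and $\ep\to 0^+$ closes this case.

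When \textbf{(J1)} fails, $c_0=\yy$. To cover an arbitrary $c>0$ I truncate the kernels: set $\tilde J_i^R:=\alpha J_i\chi_{[-R,R]}$ renormalized so \textbf{(J)} still holds, with $R$ large and $\alpha<1$ close to $1$ so that $\tilde J_i^R\le J_i$; the auxiliary kernels trivially satisfy \textbf{(J1)}. A comparison principle for \eqref{1.1} with smaller kernel produces a solution $\tilde u$ on a nested free-boundary support with $\tilde u\preceq u$. The spreading speed $\tilde c_0(R,\alpha)$ of the auxiliary problem, characterized by \eqref{1.3}-\eqref{1.4}, tends to $+\yy$ as $R\to\yy$ and $\alpha\to 1^-$. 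Choosing $R,\alpha$ with $\tilde c_0(R,\alpha)>c$ and applying the previous paragraph to $\tilde u$ yields the required lower bound, which lifts to $u$.

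The main obstacle is the sub-solution verification of paragraph two. The delicate point is the mismatch between the integral $\int_{-\yy}^0$ in \eqref{1.3} and the integral $\int_{g(t)}^{h(t)}$ in \eqref{1.1}, which one must dominate uniformly in $t$: the positive margin $c_0-c_1$, combined with the decay of $\Phi_0$ to $\mathbf{u}^*$ near $-\yy$ and to $\mathbf{0}$ near $0$, supplies the slack, but the bookkeeping at the crossover point of the two translated profiles and near the free boundary requires care. A secondary, milder obstacle is the kernel-continuity statement $\tilde c_0(R,\alpha)\to\yy$ used in the accelerated case, which should follow from the construction of the semi-wave in the proof of Theorem~\ref{B}.
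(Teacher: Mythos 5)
Your upper bound via the spatially constant ODE supersolution together with \textbf{(f4)} matches the paper exactly. The problem is the lower bound. For a nonlocal equation, the \emph{minimum} of two translated profiles is not a subsolution: at a point $x_0$ where $\underline u(x_0)=(1-\ep)\Phi_0(x_0-c_1t-X_\ep)$, one has $\underline u\le(1-\ep)\Phi_0(\cdot-c_1t-X_\ep)$ everywhere, so the nonlocal term $\int J(x_0-y)\underline u(y)\,dy$ \emph{decreases} relative to the single-profile value --- exactly the wrong direction for the subsolution inequality. The shortfall $(1-\ep)\int J(x_0-y)\bigl[\Phi_0(y-c_1t-X_\ep)-\underline u(y)\bigr]\,dy$ must be absorbed by some strict slack in the single-profile equation. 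You identify this as the "crossover loss" and offer $(c_0-c_1)|\Phi_0'|$ as the cushion, but $\Phi_0'$ degenerates in the tails and you give no rate comparison showing the slack dominates the loss uniformly; the verification is left entirely open and there is no reason to expect it to go through. (This is not the same situation as a maximum of subsolutions, which is automatically a nonlocal subsolution because the nonlocal integral \emph{increases}.) The paper avoids this by using the \emph{additive} ansatz $\underline U(t,x)=(1-\ep)\bigl[\Phi_0(x-\underline h(t))+\Phi_0(-x-\underline h(t))-\mathbf u^*\bigr]$ with $\underline h(t)=c_0(1-2\ep)t+K$: under this choice the nonlocal integral splits exactly into two single-profile integrals (plus the constant), and the nonlinearity is handled via \textbf{(f2)}, \textbf{(f3)}; the verification is exactly \cite[Lemma 3.4]{DN21}, which the paper cites rather than reproves. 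Once you have $u(t+T,x)\succeq\underline U(t,x)$, the conversion to $\max_{|x|\le ct}\sum_i|u_i-u_i^*|\to 0$ is the same short computation you envisage, and this part of your outline is fine.

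For the accelerated case, your compactly supported truncation $\tilde J_i^R=\alpha J_i\chi_{[-R,R]}$ (renormalized) is in the right spirit but creates extra work that the paper deliberately avoids. A compactly supported kernel satisfies \textbf{(J2)}, so the corresponding $C_*(\tilde J^R)$ is finite and bounds the semi-wave speed $\tilde c_0(R,\alpha)$ from above; you would have to prove separately that this finite quantity diverges as $R\to\infty$, which you acknowledge but do not do. The paper instead takes $J_i^n(x)=J_i(x)$ for $|x|\le n$ and $\frac{n}{|x|}J_i(x)$ for $|x|>n$ (then normalizes): this kernel is dominated by $J_i$ (enabling comparison with the full problem), satisfies \textbf{(J1)} but not \textbf{(J2)} (so $C_*^n=\infty$), and \cite[Lemmas 3.6, 3.8]{DN21} already establish $c^n\to\infty$. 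So your approach could be salvaged, but as written the divergence of $\tilde c_0(R,\alpha)$ is an unproved claim, not a corollary of Theorem~\ref{B}.
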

\begin{remark}\label{r1.1}By Theorem \ref{A} and Theorem \ref{t1.1}, we know that if one of the kernel functions $J_i$ with $\mu_i>0$ violates
\[\dd\int_{0}^{\yy}xJ_i(x)\dx<\yy,\]
then the accelerated spreading will happen, which means that the species $u_i$ will accelerate the propagation of other species. This phenomenon is also captured by Xu et al \cite{XLL} for the cauchy problem, and is called by the transferability of acceleration propagation.
\end{remark}

Before giving our next main result, we further introduce an assumption on $F$, i.e.,\\
{\bf(f6)}\, For $i\in\{1,\cdots,m\}$, $\sum_{j=1}^{m}\partial_jf_i({\bf0})u^*_j>0$, $\sum_{j=1}^{m}\partial_jf_i({\bf u^*})u^*_j<0$ and $f_i(\eta {\bf u^*})>0$ with $\eta\in(0,1)$.

\begin{theorem}\label{t1.2}Assume that {\bf(f6)} holds, $m_0=m$ and ${\bf(J^\gamma)}$ holds with $\gamma\in(1,2]$. Let $(u,g,h)$ be a solution of \eqref{1.1} and spreading happen. Then
\bess\left\{\begin{aligned}
&\lim_{t\to\yy}\max_{|x|\le s(t)}\sum_{i=1}^{m}|u_i(t,x)-u^*_i|=0  ~ {\rm for ~ any } ~ 0\le s(t)=t^{\frac{1}{\gamma-1}}o(1) ~ ~ {\rm  ~ if ~}\gamma\in(1,2),\\
&\lim_{t\to\yy}\max_{|x|\le s(t)}\sum_{i=1}^{m}|u_i(t,x)-u^*_i|=0  ~ {\rm for ~ any } ~ 0\le s(t)=(t\ln t) o(1)  ~ ~ {\rm  ~ if ~ }\gamma=2.
\end{aligned}\right.\eess
\end{theorem}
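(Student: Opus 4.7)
The plan is to sharpen Theorem \ref{t1.1} --- which already supplies uniform convergence on $|x|\le ct$ for every fixed $c>0$ --- by exploiting the precise free-boundary rates $h(t),\,-g(t)\gtrsim t^{1/(\gamma-1)}$ for $\gamma\in(1,2)$ and $h(t),\,-g(t)\gtrsim t\ln t$ for $\gamma=2$ provided by Theorem 1.5 of \cite{DN21}. Since $s(t)=t^{1/(\gamma-1)}o(1)$ (resp.\ $(t\ln t)o(1)$), the window $[-s(t),s(t)]$ is a vanishing fraction of the interior of $(g(t),h(t))$ as $t\to\yy$, so it suffices to prove that $u(t,x)\to{\bf u^*}$ uniformly on $|x|\le \alpha h(t)$ for some fixed $\alpha\in(0,1)$.

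I would construct a spatially-constant sub-solution $\underline U(t,x)=\zeta(t){\bf u^*}$ on the moving region $|x|\le\alpha h(t)$, together with a corresponding super-solution $\bar U(t,x)=\bar\zeta(t){\bf u^*}$ coming from the invariance of $[{\bf 0},{\bf\hat u}]$ and hypothesis {\bf (f4)}. Plugging $\underline U$ into the $i$-th equation of \eqref{1.1} at a point $|x|<\alpha h(t)$ produces the defect
\begin{equation*}
\partial_t \underline U_i - d_i\mathcal{L}[\underline U_i] - f_i(\underline U) = \zeta'(t)u^*_i + d_i\zeta(t)u^*_i\,\theta_i(t,x) - f_i(\zeta(t){\bf u^*}),
\end{equation*}
where $\theta_i(t,x):=\int_{(g(t),h(t))^c}J_i(x-y)\dy$. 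For $|x|\le\alpha h(t)$, assumption $({\bf J^\gamma})$ yields $\theta_i(t,x)\le C((1-\alpha)h(t))^{1-\gamma}$, which is $O(t^{-1})$ when $\gamma\in(1,2)$ and $O((t\ln t)^{-1})$ when $\gamma=2$. Thus $\underline U$ is a genuine sub-solution provided $\zeta$ obeys the scalar ODE
\begin{equation*}
\zeta'(t)\le \min_{1\le i\le m}\frac{f_i(\zeta(t){\bf u^*})}{u^*_i} - C\eta(t),
\end{equation*}
with $\eta(t)$ the tail rate above. Hypothesis {\bf (f6)} --- in particular $f_i(\zeta{\bf u^*})>0$ for all $\zeta\in(0,1)$ and $\sum_j\partial_jf_i({\bf u^*})u^*_j<0$ --- makes $\zeta=1$ an exponentially stable equilibrium of $\zeta'=\min_i u^{*-1}_i f_i(\zeta{\bf u^*})$, so the small perturbation $-C\eta(t)$ still forces $\zeta(t)\nearrow 1$ as $t\to\yy$. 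An analogous construction yields $\bar\zeta(t)\searrow 1$. Comparison then gives $u(t,x)\to{\bf u^*}$ uniformly on $|x|\le\alpha h(t)$, which contains $[-s(t),s(t)]$ for all $t$ large.

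The central obstacle is \emph{initializing} the sub-solution at some time $T_0$: one needs $u(T_0,x)\succeq\zeta(T_0){\bf u^*}$ with $\zeta(T_0)>0$ uniformly on $|x|\le\alpha h(T_0)$, but for $\gamma<2$ the radius $\alpha h(T_0)$ is super-linear in $T_0$ and therefore not directly delivered by Theorem \ref{t1.1}. I would bridge this by a finite-step bootstrap: starting from the window $|x|\le c T_0$ guaranteed by Theorem \ref{t1.1}, iteratively widen to $|x|\le\beta_k h(t)$ with $\beta_k\nearrow\alpha$, inserting the previous positive lower bound into the $J_i$-integral and using $({\bf J^\gamma})$ together with an appropriate time-waiting argument to quantify the propagation at each scale. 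After finitely many iterations the target window $[-\alpha h(t),\alpha h(t)]$ is reached and the main sub-solution argument above takes over, completing the proof.
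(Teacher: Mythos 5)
Your proposed sub-solution $\underline U(t,x)=\zeta(t)\,{\bf u^*}$ on the moving window $|x|\le\alpha h(t)$ cannot be compared with the true solution, for two intertwined reasons. First, to apply a comparison principle for the free boundary problem you must extend $\underline U$ to all of $(g(t),h(t))$ and it must lie at or below $0$ at the free boundary; a spatially constant profile fails this, and if you instead truncate $\underline U$ to zero outside $|x|\le\alpha h(t)$, you create a jump at the moving interface $|x|=\alpha h(t)$. Since that interface advances outward, a fixed point $x_0$ just crossed by it sees $\underline U(\cdot,x_0)$ jump \emph{up} from $0$ to $\zeta(t){\bf u^*}$, so $\partial_t\underline U$ has an upward Dirac mass there and the sub-solution inequality $\partial_t\underline U\preceq\dots$ is violated. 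Second, and independently, the tail bound $\theta_i(t,x)\le C\big((1-\alpha)h(t)\big)^{1-\gamma}$ only holds for $|x|\le\alpha h(t)$. On the annulus $\alpha h(t)<|x|<h(t)$ one has $\theta_i(t,x)=\Theta(1)$, and with $\zeta$ close to $1$ so that $f_i(\zeta{\bf u^*})$ is small, the defect $\zeta'u_i^*+d_i\zeta u_i^*\theta_i-f_i(\zeta{\bf u^*})$ becomes strictly positive — so the differential inequality fails on a region of nontrivial measure, not just at a boundary point. These two obstructions are the real reason a continuously tapering profile is required; the initialization/bootstrap issue you flag is actually the easy part.

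A further warning sign: if your argument went through it would prove uniform convergence on $|x|\le\alpha h(t)$, i.e.\ on $|x|\le c\,t^{1/(\gamma-1)}$ for a fixed $c>0$, which is strictly stronger than the claimed $s(t)=t^{1/(\gamma-1)}o(1)$. The paper's method cannot reach this and there is no reason to expect it to hold, because the constants in the two-sided estimate $h(t)\approx t^{1/(\gamma-1)}$ need not coincide with the location of the interior level sets. The paper instead builds a lower solution with its own free boundary $\underline h(t)$ verified to satisfy $\underline h(t)\le h(t)$: a tent $\underline u={\bf K}_\ep\bigl(1-|x|/\underline h(t)\bigr)$ when $\gamma\in(1,2)$, and a plateau-with-linear-ramp $\underline u={\bf K}_\ep\min\{1,(\underline h(t)-|x|)/(t+\theta)^{1/2}\}$ when $\gamma=2$. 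These profiles vanish continuously at $\pm\underline h(t)$, Lemmas \ref{l2.1}--\ref{l2.2} control the nonlocal integral of the profile against the kernel, and {\bf(f6)} gives $F(\underline u)\succeq\overline C\ep\,\underline u$ to absorb the loss; the free-boundary inequalities for $\underline h$ are checked directly using $({\bf J^\gamma})$. Your spatially flat ansatz discards exactly the near-boundary taper that makes this scheme close, so as written the approach does not yield the theorem.
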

\begin{remark}\label{r1.2}It can be seen from Theorem \ref{A}, Theorem \ref{t1.1}, Theorem \ref{t1.2}, \cite[Theorem 3.15]{WHD} and \cite[Theorem 1.2]{ZLN} that free boundary problem with nonlocal diffusion has richer dynamics than its counterpart with random diffusion. That is because the kernel function plays a important role in studying the dynamics of nonlocal diffusion problem, and the accelerated spreading may happen if kernel function violates the so-called "thin-tailed" condition , see \cite{XLL} and the references therein.
\end{remark}
Now we assume that {\bf (J1)} holds, and choose a any $\mu_i>0$ and fix other $\mu_j$. Denote the unique solution pair of \eqref{1.3}-\eqref{1.4} by $(c_{\mu_i},\Phi^{c_{\mu_i}})$. By the monotonicity of $\Phi^{c_{\mu_i}}$, there is a unique $l_{\mu_i}>0$ such that $\phi^{c_{\mu_i}}_i(-l_{\mu_i})=u^*_i/2$. Define $\hat{\Phi}^{c_{\mu_i}}(x)=\Phi^{c_{\mu_i}}(x-l_{\mu_i})$. Our next result concerns the limitation of $(c_{\mu_i},l_{\mu_i},\Phi^{c_{\mu_i}},\hat{\Phi}^{c_{\mu_i}})$ as $\mu_i\to\yy$.

\begin{theorem}\label{t1.3}If {\bf(J2)} holds, then $c_{\mu_i}\to C_*$, $l_{\mu_i}\to\yy$, $\Phi^{c_{\mu_i}}(x)\to{\bf0}$ and $\hat{\Phi}^{c_{\mu_i}}(x)\to\Psi(x)$ as $\mu_i\to\yy$,
where $(C_*,\Psi(x))$ is the minimal speed solution pair of travelling wave problem \eqref{1.5} with $\psi_i(0)=u^*_i/2$. If {\bf(J2)} does not hold, then $c_{\mu_i}\to \yy$ as $\mu_i\to\yy$.
\end{theorem}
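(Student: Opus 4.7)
For any $c\in(0,C_*)$, the semi-wave profile $\Phi^{c}=(\phi_j^c)$ in \eqref{1.3} depends only on $c$ and not on the coefficients $\mu_j$; only the compatibility relation \eqref{1.4} encodes the $\mu_j$. Setting
\[H_j(c):=\int_{-\yy}^{0}\int_{0}^{\yy}J_j(x-y)\phi_j^c(x)\,{\rm d}y\,{\rm d}x,\]
the value $c=c_{\mu_i}$ is implicitly determined by $c_{\mu_i}=\sum_{j=1}^{m_0}\mu_j H_j(c_{\mu_i})$ with the $\mu_j$ ($j\neq i$) held fixed. The whole argument rests on this scalar identity together with the non-existence of semi-waves at $c=C_*$ supplied by Theorem \ref{B}(i).

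\textbf{Convergence of the speed.} I would first record that $H_j$ is continuous on $(0,C_*)$ with $H_j(c)>0$, which follows from continuous dependence of $\Phi^c$ on $c$ and the positivity and monotonicity of $\phi_j^c$. The proof then proceeds by contradiction: if some subsequence $c_{\mu_i^{(n)}}$ converged to a finite $c^{**}<C_*$, then $H_i(c_{\mu_i^{(n)}})\to H_i(c^{**})>0$, whence
\[c_{\mu_i^{(n)}}\ge\mu_i^{(n)}H_i\big(c_{\mu_i^{(n)}}\big)\to\yy,\]
contradicting boundedness of the subsequence. Because $c_{\mu_i}\in(0,C_*)$, if {\bf(J2)} holds then $C_*<\yy$, the family $\{c_{\mu_i}\}$ is bounded, and every subsequential limit must equal $C_*$, giving $c_{\mu_i}\to C_*$; if {\bf(J2)} fails then $C_*=\yy$ and no finite accumulation point is possible, giving $c_{\mu_i}\to\yy$. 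This settles both statements about $c_{\mu_i}$ and completes the proof in the case when {\bf(J2)} fails.

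\textbf{Limit of the profiles under (J2).} Each $\Phi^{c_{\mu_i}}$ is nonincreasing on $(-\yy,0]$ and takes values in $[{\bf 0},{\bf u^*}]$, so Helly's theorem provides a pointwise, and hence locally uniform, subsequential limit $\Phi^*$; passing to the limit in \eqref{1.3} by dominated convergence shows $\Phi^*$ solves the semi-wave equation at $c=C_*$ with $\Phi^*(0)={\bf 0}$. The monotone limit ${\bf v}:=\Phi^*(-\yy)$ satisfies $F({\bf v})={\bf 0}$, so {\bf(f1)}(i) forces ${\bf v}\in\{{\bf 0},{\bf u^*}\}$; the case ${\bf v}={\bf u^*}$ would give a semi-wave at $c=C_*$ and is excluded by Theorem \ref{B}(i), leaving ${\bf v}={\bf 0}$ and, by monotonicity, $\Phi^*\equiv{\bf 0}$. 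Combined with $\phi_i^{c_{\mu_i}}(-l_{\mu_i})=u_i^*/2>0$ this forces $l_{\mu_i}\to\yy$. For the final claim, on the expanding intervals $(-\yy,l_{\mu_i}]$ the shifted profiles $\hat{\Phi}^{c_{\mu_i}}$ are monotone and bounded with $\hat{\phi}_i^{c_{\mu_i}}(0)=u_i^*/2$; a further Helly extraction yields a limit $\hat{\Phi}^*$ on $\R$ solving \eqref{1.5} at $c=C_*$. Both tails of $\hat{\Phi}^*$ are equilibria of $F$, and monotonicity together with $\hat{\phi}_i^*(0)=u_i^*/2$ excludes the constant profiles, leaving $\hat{\Phi}^*(-\yy)={\bf u^*}$ and $\hat{\Phi}^*(\yy)={\bf 0}$. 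Uniqueness up to translation of the minimal speed travelling wave, pinned down by $\psi_i(0)=u_i^*/2$, then identifies $\hat{\Phi}^*=\Psi$ and upgrades the convergence to the full family $\mu_i\to\yy$.

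\textbf{Main obstacles.} The most delicate technical points are (a) the continuous dependence of $\Phi^c$ on $c$ used in the contradiction step, and (b) passing to the limit inside the nonlocal convolution, both in the interior and at the free-boundary endpoint, of the semi-wave equation; monotonicity of $\Phi^{c_n}$ and uniform boundedness of $J_i$ make dominated convergence available, but some care is required to rule out loss of mass at $x=0$ and $x=l_{\mu_i}$. A secondary issue is invoking uniqueness (up to translation) of the minimal speed travelling wave for the cooperative system \eqref{1.5}; if this is not directly available from \cite{DN21}, it must be argued separately, for instance through a sliding/comparison argument adapted to the cooperative nonlocal setting.
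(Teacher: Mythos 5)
Your proposal gets the overall architecture right and the profile part (Helly extraction, passing to the limit in the nonlocal equation, identifying the tails $\hat\Phi^*(\pm\infty)$ as equilibria via {\bf(f1)}, and pinning down the wave by $\hat\phi_i^*(0)=u_i^*/2$) tracks the paper's argument closely. But the route you take to $c_{\mu_i}\to C_*$ diverges from the paper's and rests on an unproved claim that carries the real difficulty.

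You reduce the speed convergence to the scalar identity $c_{\mu_i}=\sum_j\mu_j H_j(c_{\mu_i})$ and invoke \emph{continuity of $H_j$ on $(0,C_*)$}, attributing it to ``continuous dependence of $\Phi^c$ on $c$.'' This is exactly where the subtlety lies. If $c_n\to c^{**}\in(0,C_*)$, Helly gives a monotone limit $\Phi^\dagger$ solving the semi-wave equation at $c^{**}$, but there is no a priori reason $\Phi^\dagger$ equals $\Phi^{c^{**}}$: the limit could a priori degenerate to $\mathbf 0$ (this is precisely what does happen as $c\uparrow C_*$, and ruling it out for $c^{**}<C_*$ requires the same shifted-profile/minimal-speed machinery you are trying to use downstream). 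So the continuity you record at the outset is not a cheap preliminary; proving it would essentially re-derive the theorem. The paper avoids this circularity by a different ordering: it first establishes, via a comparison argument, that $c_{\mu_i}$ is \emph{nondecreasing} in $\mu_i$, so $C_\infty:=\lim c_{\mu_i}\le C_*$ exists without any continuity claim. It then derives the degeneracy of $\Phi^{c_{\mu_i}}$ directly from the inequality
\[
0\le\int_{-\infty}^0\int_0^\infty J_i(x-y)\,\phi_i^{c_{\mu_i}}(x)\,{\rm d}y\,{\rm d}x\le \frac{c_{\mu_i}}{\mu_i}\le\frac{C_*}{\mu_i}\to 0,
\]
handling separately the compactly and non-compactly supported cases of $J_i$, concludes $l_{\mu_i}\to\infty$, and only \emph{afterwards} identifies the translated limit $\hat\Phi^\infty$ as a traveling wave of speed $C_\infty$, forcing $C_\infty\ge C_*$ and hence $C_\infty=C_*$. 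This is cleaner because $C_\infty=C_*$ comes out as a consequence rather than a prerequisite. Your step from $\Phi^*(-\infty)=\mathbf u^*$ to a contradiction with Theorem~\ref{B}(i) is fine \emph{once} you know the limit speed is $C_*$, but that is exactly what the faulty continuity argument was supposed to supply.

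Two smaller points: you do not exclude the accumulation value $c^{**}=0$ (the paper's monotonicity in $\mu_i$ handles this for free); and note that the paper, like you, relies on uniqueness up to translation of the minimal-speed traveling wave to conclude $\hat\Phi^\infty=\Psi$ and to upgrade subsequential to full convergence, so your flagged concern there is shared by the source. The case when {\bf(J2)} fails is analogous to the paper's once the speed argument is repaired: if $C_\infty<\infty$, the translated-profile limit would yield a bounded-speed traveling wave, contradicting $C_*=\infty$.
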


We next study the dynamics for \eqref{1.2}. For every $1\le i\le m$, we denote the level set of solution component $U_i$ of \eqref{1.2} by $E^{i}_{\lambda}=\{x\in\mathbb{R}: U_i(t,x)=\lambda, ~ \lambda\in(0,u^*_i)\}$,
$x^{+}_{i,\lambda}=\sup E^i_{\lambda}$ and $x^{-}_{i,\lambda}=\inf E^i_{\lambda}$.
\begin{theorem}\label{t1.4} Let $U$ be a solution of \eqref{1.5} with $U(0,x)=(u_{i0}(x))$ and ${\bf \hat{u}}=\yy$. Then the following results hold:

{\rm(i)} If {\bf (J2)} holds, then
\bes\label{1.6}
\lim_{t\to\yy}\frac{|x^{\pm}_{i,\lambda}|}{t}=C_*, ~ ~ \lim_{|x|\to\yy}U(t,x)={\bf0} ~ ~ {\rm for ~ any ~ } t\ge0,
\ees
and
\bes\label{1.7}\left\{\begin{array}{lll}
\lim_{t\to\yy}\max_{|x|\le ct}\dd\sum_{i=1}^{m}|U_i(t,x)-u^*_i|=0~ ~ {\rm for ~ any ~ }c\in(0,C_*),\\
\lim_{t\to\yy}\max_{|x|\ge ct}\dd\sum_{i=1}^{m}|U_i(t,x)|=0~ ~ {\rm for ~ any ~ }c>C_*.
 \end{array}\right.
 \ees

{\rm(ii)} If {\bf (J2)} does not hold, then
\bes\label{1.8}\lim_{t\to\yy}\frac{|x^{\pm}_{i,\lambda}|}{t}=\yy.
\ees
Moveover,
\bes\label{1.9}
\lim_{t\to\yy}\max_{|x|\le ct}\sum_{i=1}^{m}|U_i(t,x)-u^*_i|=0~ ~ {\rm for ~ any ~ }c>0.
\ees

{\rm(iii)} If {\bf(f6)} holds, $m_0=m$ and ${\bf(J^\gamma)}$ holds with $\gamma\in(1,2]$. Then
\bess\left\{\begin{aligned}
&\lim_{t\to\yy}\max_{|x|\le s(t)}\sum_{i=1}^{m}|U_i(t,x)-u^*_i|=0  ~ {\rm for ~ any } ~ 0\le s(t)=t^{\frac{1}{\gamma-1}}o(1) ~ ~ {\rm  ~ if ~}\gamma\in(1,2),\\
&\lim_{t\to\yy}\max_{|x|\le s(t)}\sum_{i=1}^{m}|U_i(t,x)-u^*_i|=0  ~ {\rm for ~ any } ~ 0\le s(t)=(t\ln t) o(1)  ~ ~ {\rm  ~ if ~ }\gamma=2.
\end{aligned}\right.\eess
\end{theorem}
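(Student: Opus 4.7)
The plan is to sandwich the Cauchy solution $U$ between a subsolution produced by an auxiliary free boundary problem of the form \eqref{1.1} and a pair of supersolutions: the spatially constant ODE solution from {\bf(f4)} and an exponential built from the linearization of \eqref{1.2} at ${\bf 0}$. Fix some $\mu_i>0$ in an auxiliary copy of \eqref{1.1}, keep the same kernels and the same initial data $u_{i0}$, denote its solution by $(u,g,h)$, and extend $u_i(t,\cdot)$ by zero outside $[g(t),h(t)]$. Because $\int_{g(t)}^{h(t)}J_i(x-y)u_i(t,y)\dy\le\int_{\R}J_i(x-y)u_i(t,y)\dy$ and $u_i(t,g(t))=u_i(t,h(t))=0$, this extension is a subsolution of \eqref{1.2}, so cooperative comparison yields $U(t,x)\succeq u(t,x)$ on $\R$. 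In the reverse direction, the spatially constant $\bar u(t)$ solving $\bar u'=F(\bar u)$ with $\bar u(0)=\max_{|y|\le h_0}u_{i0}(y)$ is an exact solution of \eqref{1.2} and dominates $U$ by comparison, and {\bf(f4)} gives $\bar u(t)\to{\bf u^*}$.

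For the inner convergence statements, namely the first line of \eqref{1.7}, the whole of \eqref{1.9}, and the convergence in part (iii), I would tune $\mu_i$ in the auxiliary problem so that its semi-wave speed $c_{\mu_i}$ from \eqref{1.3}-\eqref{1.4} exceeds the prescribed rate. Theorem \ref{t1.3} gives $c_{\mu_i}\to C_*$ as one $\mu_i\to\yy$, where $C_*=\yy$ when {\bf(J2)} fails. So for any target $c\in(0,C_*)$ (respectively any $c>0$ in part (ii)), pick $\mu_i$ with $c_{\mu_i}>c$; Theorem \ref{t1.1} then forces $u(t,x)\to{\bf u^*}$ uniformly on $|x|\le ct$, and the squeeze $u\preceq U\preceq\bar u(t)$ delivers the claim. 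The level-set lower bound $|x^\pm_{i,\lambda}|/t\ge c$ is immediate, and letting $c$ vary yields \eqref{1.8}. For part (iii), \cite[Theorem 1.5]{DN21} provides $-g(t),h(t)\approx t\ln t$ (respectively $t^{1/(\gamma-1)}$) for the auxiliary problem, and Theorem \ref{t1.2} upgrades this to $u(t,x)\to{\bf u^*}$ uniformly on $|x|\le s(t)$ at the prescribed order; the same squeeze finishes the argument.

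The outer decay in the second line of \eqref{1.7}, the pointwise limit in \eqref{1.6}, and the matching bound $|x^\pm_{i,\lambda}|/t\le C_*$ require a supersolution that vanishes at spatial infinity. Because ${\bf \hat{u}}=\yy$ here, {\bf(f2)} yields the KPP bound $F(v)\preceq\nabla F({\bf 0})v$ on all of $\mathbb{R}^m_+$, so an exponential $\bar U_i(t,x)=A\xi_i e^{-\lambda(x-ct)}$ is a supersolution of \eqref{1.2} whenever $(\lambda,c,\xi)$ is a principal characteristic triple of the linearization at ${\bf 0}$; by Theorem \ref{B} such a triple exists for every $c>C_*$. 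Choosing $A$ large so that $\bar U(0,\cdot)\succeq u_{i0}(\cdot)$ on $\R$, which is possible because $u_{i0}$ is compactly supported in $[-h_0,h_0]$, comparison gives $U(t,x)\preceq A\xi e^{-\lambda(x-ct)}$, and this tends to ${\bf 0}$ as $x-ct\to\yy$ and, at fixed $t$, as $x\to\yy$; a symmetric exponential handles $x\to-\yy$. Combined with the inner convergence, this yields \eqref{1.6}, the outer decay in \eqref{1.7}, and $|x^\pm_{i,\lambda}|/t\to C_*$.

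The chief technical obstacle is the construction of the exponential supersolution: one must identify the correct characteristic pair $(\lambda,c)$ for the nonlocal cooperative linearization, use {\bf(f2)} (together with ${\bf \hat{u}}=\yy$) to verify the KPP bound, and pick $A$ finite by exploiting the compact support of $u_{i0}$. Beyond this the proof is a careful bookkeeping exercise using Theorems \ref{A}, \ref{B}, \ref{t1.1}, \ref{t1.2} and \ref{t1.3} applied to a suitably chosen auxiliary free boundary problem.
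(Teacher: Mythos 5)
Your overall architecture matches the paper closely: the lower bound comes from an auxiliary free boundary problem \eqref{1.1} which, after extension by zero, is a subsolution of \eqref{1.2}; the crude upper bound comes from the spatially homogeneous ODE and {\bf(f4)}; the inner convergence in \eqref{1.7}, \eqref{1.9} and part (iii) is obtained by letting the auxiliary $\mu_i\to\yy$ and invoking Theorems~\ref{t1.1}, \ref{t1.2}, \ref{t1.3}; and the level-set lower bounds for $|x^{\pm}_{i,\lambda}|/t$ fall out of the same squeeze. All of this is exactly what the paper does.

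Where you genuinely diverge is in the supersolution used for the outer decay (the second line of \eqref{1.7}, $\lim_{|x|\to\yy}U={\bf 0}$, and $\limsup|x^{\pm}_{i,\lambda}|/t\le C_*$). You propose an exponential $A\xi e^{-\lambda(x-ct)}$ built from a characteristic triple of the linearization of \eqref{1.2} at ${\bf 0}$, using {\bf(f2)} with ${\bf\hat u}=\yy$ to get $F(v)\preceq\nabla F({\bf 0})v$. The paper instead takes $\bar U=K\Psi_{C_*}(x-C_*t)$ (and its mirror $K_1\Psi_{C_*}(-x-C_*t)$), a large constant multiple of the minimal-speed traveling wave: {\bf(f2)} gives $KF(\Psi)\succeq F(K\Psi)$ for $K\ge 1$, so $\bar U$ is a supersolution once one proves that $\Psi_{C_*}\succ{\bf 0}$ is strictly decreasing (this is done via \cite[Lemma~2.2]{DN21} and \cite[Lemma~2.5]{DLZ}). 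Both roads are viable, and each trades one piece of technical work for another: the paper's route needs the strict positivity and monotonicity of $\Psi_{C_*}$; yours needs the solvability, for each $c>C_*$, of the characteristic eigenvalue problem $\lambda c\xi = D\circ(\hat{\bf J}(\lambda)-{\bf 1})\circ\xi+\nabla F({\bf 0})\xi$. One caveat: you attribute the existence of this triple ``by Theorem~\ref{B},'' but Theorem~\ref{B} only asserts existence of semi-waves/traveling waves relative to $C_*$ and the equivalence $C_*<\yy\Leftrightarrow{\bf(J2)}$; the linear determinacy of $C_*$ (that the characteristic equation is solvable exactly for $c\ge C_*$) is a separate, standard fact that does hold under {\bf(f2)} and is established inside the proof machinery of \cite{DN21}, but it is not the literal content of Theorem~\ref{B}. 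If you take this route you should cite or reprove that fact explicitly rather than lean on Theorem~\ref{B}. A second, minor point: your exponential supersolution is unbounded as $x\to-\yy$, which is harmless here because the ODE comparison already caps $U$ uniformly, but it is worth noting when invoking the comparison principle. With these two clarifications your argument would be complete and is a legitimate alternative to the paper's.
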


As before, we choose a any $\mu_i>0$ and fix other $\mu_j$. Our last main result concerns the limiting problem of \eqref{1.1} as $\mu_i\to\yy$.
\begin{theorem}\label{t1.5}

Problem \eqref{1.2} with $U(0,x)=(u_{i0}(x))$ for $|x|\le h_0$ and $U(0,x)={\bf 0}$ for $x\in\mathbb{R}\setminus [-h_0,h_0]$ is the limiting problem of \eqref{1.1} as $\mu_i\to\yy$. More precisely, denoting the unique solution of \eqref{1.1} by $(u_{\mu_i},g_{\mu_i},h_{\mu_i})$ and letting $\mu_i\to\yy$, we have $u_{\mu_i}(t,x)\to U(t,x)$ locally uniformly in $[0,\yy)\times\mathbb{R}$ and $-g_{\mu_i}(t), ~ h_{\mu_i}(t)\to\yy$ locally uniformly in $(0,\yy)$.
\end{theorem}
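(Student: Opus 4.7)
The plan is to realize $U$ as the $\mu_i\to\yy$ monotone limit of the zero-extensions $\tilde u_{\mu_i}$, where $\tilde u_{\mu_i}(t,x):=u_{\mu_i}(t,x)$ for $x\in[g_{\mu_i}(t),h_{\mu_i}(t)]$ and $\tilde u_{\mu_i}(t,x):=\mathbf{0}$ otherwise. First I would establish monotonicity in $\mu_i$: with the remaining $\mu_j$ frozen, the triple $(u_{\mu_i^{(2)}},g_{\mu_i^{(2)}},h_{\mu_i^{(2)}})$ with $\mu_i^{(2)}>\mu_i^{(1)}$ is a supersolution of the problem associated with $\mu_i^{(1)}$, since the only $\mu_i$-dependence sits in the two Stefan-type boundary laws and enlarging $\mu_i$ only enlarges their right-hand sides. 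The comparison principle for \eqref{1.1} established in \cite{DN21} then yields $g_{\mu_i^{(2)}}\le g_{\mu_i^{(1)}}$, $h_{\mu_i^{(1)}}\le h_{\mu_i^{(2)}}$ and $\tilde u_{\mu_i^{(1)}}\preceq\tilde u_{\mu_i^{(2)}}$ pointwise on $[0,\yy)\times\mathbb R$. A direct check shows each $\tilde u_{\mu_i}$ is a subsolution of \eqref{1.2} on the whole line: inside $(g_{\mu_i}(t),h_{\mu_i}(t))$ the operator $\mathcal L_{g_{\mu_i},h_{\mu_i}}[\cdot]$ agrees with $\mathcal L_{\mathbb R}[\cdot]$ applied to the zero-extension, while outside $\partial_t\tilde u_{\mu_i}=\mathbf{0}$ and $D\circ\mathcal L_{\mathbb R}[\tilde u_{\mu_i}]+F(\mathbf{0})\succeq\mathbf{0}$ since $F(\mathbf{0})=\mathbf{0}$. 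Consequently $\tilde u_{\mu_i}\preceq U$ uniformly in $\mu_i$, and the monotone limits $h^*(t):=\lim_{\mu_i\to\yy}h_{\mu_i}(t)$, $g^*(t):=\lim_{\mu_i\to\yy}g_{\mu_i}(t)$, and $U^\infty:=\lim_{\mu_i\to\yy}\tilde u_{\mu_i}\preceq U$ all exist.

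The crux is to show $h^*(t)=-g^*(t)=\yy$ for every $t>0$. Assume for contradiction that $h^*(t_0)<\yy$ for some $t_0>0$, so $h_{\mu_i}(t)\le h^*(t_0)$ for all $t\in[0,t_0]$ and all $\mu_i$. The goal is to extract a $\mu_i$-uniform positive lower bound on the $i$-th component $u_{\mu_i,i}$ inside the frozen subinterval $[-h_0,h_0]$. Comparing with the solution $w$ of the scalar nonlocal problem
\begin{equation*}
\partial_t w=d_i\mathcal L_{[-h_0,h_0]}[w]+f_i(0,\ldots,w,\ldots,0),\quad w(t,\pm h_0)=0,\quad w(0,\cdot)=u_{i0},
\end{equation*}
which is independent of $\mu_i$, yields $u_{\mu_i,i}(t,x)\ge w(t,x)>0$ for $t>0$ and $x\in(-h_0,h_0)$: the comparison uses that $[g_{\mu_i}(t),h_{\mu_i}(t)]\supseteq[-h_0,h_0]$ (so $\mathcal L_{g_{\mu_i},h_{\mu_i}}[\cdot]$ dominates $\mathcal L_{[-h_0,h_0]}[\cdot]$ on nonnegative functions), the cooperative structure \textbf{(f1)}(ii) (which gives $f_i(u_{\mu_i})\ge f_i(0,\ldots,u_{\mu_i,i},\ldots,0)$), and the strong maximum principle for the nonlocal operator. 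Integrating the free-boundary law for $h_{\mu_i}$ and applying this lower bound yields
\begin{equation*}
h^*(t_0)-h_0\ge h_{\mu_i}(t_0)-h_0\ge\mu_i\int_0^{t_0}\!\!\int_{-h_0}^{h_0}w(s,x)\int_{h_{\mu_i}(s)-x}^{\yy}J_i(z)\,dz\,dx\,ds.
\end{equation*}
The inner $J_i$-integral admits a strictly positive $\mu_i$-independent lower bound on a set of positive $(s,x)$-measure (take $s$ close to $0$ and $x$ close to $h_0$; then $h_{\mu_i}(s)-x$ is small and $\int_{h_{\mu_i}(s)-x}^{\yy}J_i(z)\,dz$ is close to $1/2$). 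Sending $\mu_i\to\yy$ contradicts finiteness of $h^*(t_0)$. A mirror argument yields $g^*(t)\equiv-\yy$.

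Having established the divergence of the free boundaries, I would identify $U^\infty$ with $U$ by passing to the limit in the equation. The local uniform divergence of $h_{\mu_i},-g_{\mu_i}$ on $(0,\yy)$ follows from pointwise monotone convergence, upgraded by Dini's theorem applied to the $t$-monotone continuous functions $h_{\mu_i}$. For any compact $K\subset[0,\yy)\times\mathbb R$, once $\mu_i$ is large enough $K\subset\{(t,x):g_{\mu_i}(t)\le x\le h_{\mu_i}(t)\}$, so $\tilde u_{\mu_i}$ solves \eqref{1.2} on $K$. Standard interior regularity for \eqref{1.2} yields equicontinuity, and Arzela-Ascoli upgrades the monotone pointwise convergence $\tilde u_{\mu_i}\to U^\infty$ to locally uniform convergence, with $U^\infty$ a solution of \eqref{1.2} for the stated extended initial data. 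Uniqueness for \eqref{1.2} (guaranteed under \textbf{(f1)}--\textbf{(f5)}) forces $U^\infty=U$. The main obstacle is the divergence step: the $\mu_i$-uniform interior positivity must survive the worst case in which $J_i$ has compact support, and the positive lower bound on the tail $\int_{h_{\mu_i}(s)-x}^{\yy}J_i$ must be extracted from a near-initial-time, near-boundary window; it is the fixed initial position $h_{\mu_i}(0)=h_0$ that guarantees this window is never empty.
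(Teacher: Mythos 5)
Your overall architecture (monotonicity in $\mu_i$, divergence of the free boundaries, identification of the monotone limit with $U$ via uniqueness) matches the paper's, but the middle step -- divergence -- has a genuine gap that you partly sense but do not resolve. You reduce the lower bound on $h_{\mu_i}(t_0)-h_0$ to
\[
\mu_i\int_0^{t_0}\!\!\int_{-h_0}^{h_0}w(s,x)\int_{h_{\mu_i}(s)-x}^{\infty}J_i(z)\,dz\,dx\,ds,
\]
with $w$ a $\mu_i$-independent positive function on $(0,t_0]\times(-h_0,h_0)$. The problem is that if $J_i$ is compactly supported in $[-R,R]$ and $h_{\mu_i}(s)\ge h_0+R$ for all $s\ge s_0(\mu_i)$, then the inner tail integral vanishes whenever $h_{\mu_i}(s)-x\ge R$, and this can kill the whole double integral. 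Your suggested fix -- work in a near-initial-time, near-boundary window $(s,x)\approx(0,h_0)$ -- does not survive two simultaneous degeneracies: $w(s,x)\to 0$ as $(s,x)\to(0,h_0)$ because $w(t,\pm h_0)=0$ and $u_{i0}(\pm h_0)=0$, and the time window on which $h_{\mu_i}(s)$ stays within $R$ of $h_0$ can shrink like $O(1/\mu_i)$, since $h_{\mu_i}'(0)$ grows linearly in $\mu_i$. Indeed nothing in the contradiction hypothesis $H(t_0)<\infty$ prevents the monotone limit $H(s)$ from jumping by more than $R$ at $s=0^+$, in which case your lower bound is identically zero for all $\mu_i$ and no contradiction results.

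The paper circumvents this by localizing the free-boundary law near the \emph{moving} endpoint rather than the fixed interval $[-h_0,h_0]$. Using only {\bf (J)} (continuity and $J_i(0)>0$), one finds $\varepsilon_1,\delta>0$ with $J_i(z)>\varepsilon_1$ for $|z|\le 2\delta$, and then restricting $x\in[g_{\mu_i}(t),g_{\mu_i}(t)+\delta]$, $y\in[g_{\mu_i}(t)-\delta,g_{\mu_i}(t)]$ yields
\[
g_{\mu_i}'(t)\le-\mu_i\varepsilon_1\delta\int_{g_{\mu_i}(t)}^{g_{\mu_i}(t)+\delta}u^i_{\mu_i}(t,x)\,dx,
\]
a bound in which the kernel contributes a fixed positive factor regardless of how far the boundary has moved. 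The remaining positivity is supplied not by an auxiliary comparison function on $[-h_0,h_0]$ but by the dominated convergence theorem applied to the monotone sequence $u^i_{\mu_i}\nearrow\hat U_i$, together with $g_{\mu_i}(t)\to G(t)$: this gives $\lim_{\mu_i\to\infty}\int_{g_{\mu_i}(t)}^{g_{\mu_i}(t)+\delta}u^i_{\mu_i}(t,x)\,dx=\int_{G(t)}^{G(t)+\delta}\hat U_i(t,x)\,dx>0$ for $t>0$, which is enough to blow up the integrated drift as $\mu_i\to\infty$ and contradict $G(t_0)>-\infty$. Your final identification step (pass to the limit in the integral form of the equation, invoke uniqueness, upgrade to local uniform convergence via Dini) is the same as the paper's and is fine, though note that Dini must be applied to truncations $\min(h_{\mu_i}(t),M)$ since the limiting boundary is $+\infty$.
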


This paper is arranged as follows. In section 2, we prove Theorem \ref{t1.1} and Theorem \ref{t1.2}. Section 3 is devoted to the proofs of Theorem \ref{t1.3}, Theorem \ref{t1.4} and Theorem \ref{t1.5}. In Section 4, we will give two epidemic model as examples to
illustrate our results.

 \section{Proofs of Theorem \ref{t1.1} and Theorem \ref{t1.2}}
\setcounter{equation}{0} {\setlength\arraycolsep{2pt}
In this section, we will prove Theorem \ref{t1.1} and Theorem \ref{t1.2} by constructing some properly upper and lower solutions.

\begin{proof}[{\bf Proof of Theorem \ref{t1.1}:}]
Assume that {\bf(J1)} holds. For small $\ep>0$ and $K>0$, we define $\underline{h}(t)=c_0(1-2\ep)t+K$
and $\underline{U}(t,x)=(1-\ep)\left[\Phi(x-\underline{h}(t))+\Phi(-x-\underline{h}(t))-{\bf u^*}\right]$ for
$t\ge0$ and $x\in[-\underline{h}(t),\underline{h}(t)]$. It then follows from \cite[Lemma 3.4]{DN21} that for any small $\ep>0$,
there exist suitable $T>0$ and $K>0$ such that
\[
g(t+T)\le-\underline{h}(t), ~ ~ h(t+T)\ge\underline{h}(t), ~ ~ u(t+T,x)\succeq\underline{U}(t,x), ~ ~ ~ ~  t\ge0, ~ x\in[-\underline{h}(t),\underline{h}(t)].
\]
Direct calculations show
\bess
&&\max_{|x|\le c_0(1-3\ep)t}|\underline{U}(t,x)-(1-\ep){\bf u^*}|\\
&&\qquad=(1-\ep)\max_{|x|\le c_0(1-3\ep)t}\left\{\sum_{i=1}^{m}|\phi_i(x-\underline{h}(t))+\phi_i(-x-\underline{h}(t))-2u_i^*|^2\right\}^{\frac{1}{2}}\\
&&\qquad\le(1-\ep)\max_{|x|\le c_0(1-3\ep)t}\left\{\sum_{i=1}^{m}\bigg(|\phi_i(x-\underline{h}(t))-u_i^*|+|\phi_i(-x-\underline{h}(t))-u_i^*|\bigg)^2\right\}^{\frac{1}{2}}\\
&&\qquad=2(1-\ep)\left\{\sum_{i=1}^{m}|\phi_i(-c_0\ep t-K)-u_i^*|^2\right\}^{\frac{1}{2}}\to0 ~ ~ {\rm as} ~ t\to\yy.
\eess
Therefore, $\liminf_{t\to\yy}u(t,x)\succeq (1-\ep){\bf u^*}$ uniformly in $|x|\le c_0(1-3\ep)t$.
Then for any $c\in(0,c_0)$, by letting $\ep>0$ small sufficiently such that $c<c_0(1-3\ep)$,
we have $\liminf_{t\to\yy}u(t,x)\succeq (1-\ep){\bf u^*}$ uniformly in $|x|\le ct$.
In view of the arbitrariness of $\ep>0$, we obtain $\liminf_{t\to\yy}u(t,x)\succeq {\bf u^*}$ uniformly in $|x|\le ct$.
On the other hand, consider the following ODE system
\[\bar{u}_t=F(\bar{u}), ~ ~ ~ ~ \bar{u}(0)=(\|u_{i0}(x)\|_{C([-h_0,h_0])}).\]
By condition {\bf(f4)} and a comparison argument, we have $\limsup_{t\to\yy}u(t,x)\le{\bf u^*}$ uniformly in $\mathbb{R}$.
Therefore, the proof of the result with {\bf(J1)} holding is complete.

Next we consider the case {\bf(J1)} being violated. As in the proof of \cite[Theorem 1.3]{DN21}, for any integer $n\ge1$ and $i\in\{1,2,\cdots,m_0\}$, define
\begin{align*}
J^n_i(x)=\left\{\begin{aligned}
&J_i(x)& &{\rm if} ~ |x|\le n,\\
&\frac{n}{|x|}J_i(x)& &{\rm if} ~ |x|\ge n,
\end{aligned}\right.  ~ ~ \tilde{J}^n_i=\frac{J^n_i(x)}{\|J^n_i\|_{L^1(\mathbb{R})}}, ~ ~ {\bf J}^n=(J^n_i), ~ ~ {\rm and} ~ {\bf \tilde{J}}^n=(\tilde{J}^n_i)
  \end{align*}
 with $J^n_i(x)\equiv\tilde{J}^n_i\equiv0$ for $i\in\{m_0+1,\cdots,m\}$.
  Clearly, the following results about $J^n_i$ and $\tilde{J}^n_i$ hold:

  (1) $J^n_i(x)\le J_i(x)$, $|x|J^n_i(x)\le nJ_i(x)$, and for any $\alpha>0$,
  there is $c>0$ depending only on $n,\alpha,J_i$ such that $e^{\alpha|x|}J^n_i(x)\ge c e^{\frac{\alpha}{2}|x|}J_i(x)$ for $x\in\mathbb{R}$,
  which directly implies that
${\bf \tilde{J}}^n$ satisfies {\bf (J)} and {\bf(J1)}, but not {\bf(J2)}.

(2) ${\bf J}^n$ is nondecreasing in $n$, and $\lim_{n\to\yy}{\bf J}^n(x)={\bf \tilde J}^n(x)={\bf J}(x)$ in $[L^1(\mathbb{R})]^m$
 and locally uniformly in $\mathbb{R}$.\\
Let $(u^n,g^n,h^n)$ be the solution of the following problem
  \bess
\left\{\begin{aligned}
&u^n_{t}=D\circ\dd\int_{g^n(t)}^{h^{n}(t)}{\bf J}^n(x-y)\circ u^n(t,y)\dy-D\circ u^n+F(u^n), && t>0,~x\in(g^n(t),h^n(t)),\\
&u^n(t,x)=0, && t>0, x\notin(g^n(t),h^n(t)),\\
&{g^n}'(t)=-\sum_{i=1}^{m_0}\mu_i\dd\int_{g^n(t)}^{h^n(t)}\!\!\int_{h^n(t)}^{\infty}
J^n_i(x-y)u^n_i(t,x)\dy\dx, && t>0,\\
&{h^n}'(t)=\sum_{i=1}^{m_0}\mu_i\dd\int_{g^n(t)}^{h^n(t)}\!\!\int_{-\yy}^{g^n(t)}
J^n_i(x-y)u^n_i(t,x)\dy\dx, && t>0,\\
&u^n(0,x)=u(T,x),~g^n(T)=g(T), ~ h^n(0)=h(T), \ \ x\in[g(T),h(T)],
\end{aligned}\right.
 \eess
where $T>0$, $u^n=(u^n_i)$, and $(u,g,h)$ is the solution of \eqref{1.1}. For any integer $n\ge1$, it follows from \cite[Lemma 3.5]{DN21} that
\[g^n(t)\ge g(t+T), ~ h^n(t)\le h(t+T) ~ {\rm and } ~ u^n(t,x)\preceq u(t+T,x) ~ ~ {\rm for } ~ t\ge0,~ g^n(t)\le x\le h^n(t).\]
Since $F$ satisfies {\bf (f1)}-{\bf (f3)}, $F(w)+(D^n-D)w$ still satisfies {\bf (f1)}-{\bf (f3)}
with $D^n=(d_i\|J^n_i\|_{L^1(\mathbb{R})})$ and $n\gg1$. Denote the unique positive root of $F(w)+(D^n-D)\circ w=0$ by ${\bf u^*_n}$.
It is easy to see that $\lim_{n\to\yy}{\bf u^*_n}={\bf u^*}$. By \cite[Lemmas 3.6 and 3.8]{DN21}, the following problem
\bess\left\{\begin{array}{lll}
 D\circ\dd\int_{-\yy}^{0}{\bf J}^n(x-y)\circ \Phi(y)\dy-D\circ \Phi+c\Phi'(x)+F(\Phi)=0, ~ ~ -\yy<x<0,\\
 \Phi(-\yy)={\bf u^*_n}, ~ ~ \Phi(0)={\bf0}, ~ ~ c=\sum_{i=1}^{m_0}\mu_i\dd\int_{-\yy}^{0}\int_{0}^{\yy}J^n_i(x-y)\phi_i(x)\dy\dx
 \end{array}\right.
 \eess
 has a solution pair $(c^n,\Phi^n)$ with $\Phi^n(x)=(\phi^n_i(x))$ and $\lim_{n\to\yy}c^n=\yy$.

 As before, for small $\ep>0$ and $K>0$ we define
 \[\underline{h}^n(t)=c^n(1-2\ep)t+K,~ ~ \underline{u}^n(t,x)=(1-\ep)\left[\Phi^n(x-\underline{h}^n(t))+\Phi^n(-x-\underline{h}^n(t))-{\bf u^*_n}\right],\]
 with $t\ge0$ and $x\in[-\underline{h}^n(t),\underline{h}^n(t)]$. By \cite[Lemma 3.7]{DN21}, for small $\ep>0$ and all large $n\gg1$ there exist $K>0$ and $T>0$
 such that
\[
g(t+T)\le-\underline{h}^n(t), ~ ~ h(t+T)\ge\underline{h}^n(t), ~ ~ u(t+T,x)\succeq\underline{u}^n(t,x), ~ ~ ~ ~  t\ge0, ~ x\in[-\underline{h}^n(t),\underline{h}^n(t)].
\]
Similarly, we have that $\liminf_{t\to\yy}\underline  u(t,x)\succeq\liminf_{t\to\yy}\underline  u^n(t,x)\succeq (1-\ep){\bf u^*_n}$ uniformly in $|x|\le c^n(1-3\ep)t$.
Since $\lim_{n\to\yy}c^n=\yy$, for any $c>0$ there are large $N\gg1$ and small $\ep_0>0$ such that $c<c^n(1-3\ep)$ for $n\ge N$ and $\ep\in(0,\ep_0)$.
Thus $\liminf_{t\to\yy}\underline  u(t,x)\succeq (1-\ep){\bf u^*_n}$ uniformly in $|x|\le ct$.
Letting $n\to\yy$ and $\ep\to0$, we derive $\liminf_{t\to\yy}\underline  u(t,x)\succeq {\bf u^*}$ uniformly in $|x|\le ct$.
Together with our early conclusion, the desired result is obtained. The proof is ended.
\end{proof}

To prove Theorem \ref{t1.2}, the following two technical lemmas are crucial, and their proofs can be found from \cite{DN21} and \cite{DNwn}, respectively.
\begin{lemma}\label{l2.1} Let $P(x)$ satisfy {\bf (J)} and $\varphi_l(x)=l-|x|$ with $l>0$. Then for any $\ep>0$, there exists $L_{\ep}>0$ such that for any $l>L_{\ep}$,
\[\dd\int_{-l}^{l}P(x-y)\varphi_l(y)\dy\ge(1-\ep)\varphi_l(x) ~ ~ {\rm in ~ }[-l,l].\]
\end{lemma}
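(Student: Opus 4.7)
The plan is to use the evenness of both $P$ and $\varphi_l$ (so $I(x):=\int_{-l}^l P(x-y)\varphi_l(y)\,dy$ is even in $x$) to reduce to $x \in [0, l]$, and to establish two complementary lower bounds on $I(x)$ that together cover all such $x$. Throughout, write $\tau := \varphi_l(x) = l - x$.

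After the substitution $u = y - x$ and $P(-u)=P(u)$, I would write
$$I(x) \;=\; \int_{-l-x}^{l-x} P(u)\,\varphi_l(x+u)\,du \;\ge\; \int_{-x}^{l-x} P(u)(\tau - u)\,du,$$
where the inequality drops the non-negative contribution on $[-l-x,-x]$ and uses $\varphi_l(x+u) = \tau - u$ for $x+u \ge 0$. By splitting the interval $[-x, l-x]$ according to the cases $x \le l/2$ and $x \ge l/2$, using $\int_{-s}^s u P(u)\,du = 0$ (evenness of $P$) to kill the odd part, and then the pointwise estimate $v \ge \tau$ on the residual piece where it arises, I would obtain the uniform lower bound
$$I(x) \;\ge\; 2\tau F(x), \qquad F(x) := \int_0^x P(v)\,dv.$$
Given $\epsilon > 0$, choose $R_\epsilon$ with $2F(R_\epsilon) \ge 1 - \epsilon$, which is possible since $F(\infty) = 1/2$. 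Then for $x \in [R_\epsilon, l]$ this bound immediately yields $I(x) \ge (1-\epsilon)\varphi_l(x)$.

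The remaining range $x \in [0, R_\epsilon)$ is the main obstacle, since there $F(x)$ may be too small even though $\tau = l - x$ is close to $l$. To handle it I would produce a second lower bound by restricting to $\{|u|\le \tau\}$, on which $|x+u|\le l$ and $\varphi_l(x+u) \ge \tau - |u|$:
$$I(x) \;\ge\; \int_{-\tau}^{\tau} P(u)(\tau - |u|)\,du \;=\; 2\int_0^\tau F(u)\,du,$$
the last equality being integration by parts. Because $F(u) \to 1/2$ as $u \to \infty$, a Cesaro-type argument gives $\tfrac{2}{\tau}\int_0^\tau F(u)\,du \to 1$; hence there exists $T_\epsilon$ depending only on $P$ and $\epsilon$ with $I(x) \ge (1-\epsilon)\tau$ whenever $\tau \ge T_\epsilon$. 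Setting $L_\epsilon := R_\epsilon + T_\epsilon$ forces $\tau \ge T_\epsilon$ on the residual range whenever $l > L_\epsilon$, completing the argument. What makes the proof uniform in $x$ is that both estimates rely only on $F(\infty) = 1/2$, so no moment condition on $P$ beyond $P \in L^1$ is required — consistent with the hypothesis that $P$ merely satisfies {\bf (J)}.
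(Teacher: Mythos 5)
Your proof is correct and self-contained. The paper itself does not reproduce a proof of Lemma \ref{l2.1} --- it only cites \cite{DN21} --- so there is no in-paper argument to compare against, but every step of yours checks out. After the change of variables $u=y-x$ and dropping the nonnegative contribution on $[-l-x,-x]$, the symmetric decomposition of $[-x,\,l-x]$ together with the estimate $\tau-u\ge 2\tau$ on the residual piece $[-x,-\tau]$ (needed only when $x\ge\tau$) indeed gives the uniform bound
\[
I(x)\ge 2\tau F(\tau)+2\tau\big[F(x)-F(\tau)\big]=2\tau F(x),
\]
while restricting to $|u|\le\tau$ and integrating by parts gives $I(x)\ge 2\int_0^\tau F(u)\,du$. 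The Ces\`{a}ro argument from $F(\infty)=1/2$ produces the threshold $T_\epsilon$ handling the range $x< R_\epsilon$, and $L_\epsilon=R_\epsilon+T_\epsilon$ ensures one of the two bounds applies to every $x\in[0,l]$ once $l>L_\epsilon$; the range $[-l,0]$ follows by evenness. Consistent with your closing remark, no hypothesis on $P$ beyond those in {\bf (J)} is used.
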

\begin{lemma}\label{l2.2} Let $P(x)$ satisfy {\bf (J)} and $l_2>l_1>0$. Define
\[\varphi(x)=\min\{1,\frac{l_2-|x|}{l_1}\}.\]
Then for any $\ep>0$, there is $L_{\ep}>0$ such that for all $l_1>L_{\ep}$ and $l_2-l_1>L_{\ep}$,
\[\dd\int_{-l_2}^{l_2}P(x-y)\varphi(y)\dy\ge(1-\ep)\varphi(x) ~ ~{\rm in ~ }[-l_2,l_2].\]
\end{lemma}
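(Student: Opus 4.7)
The plan is to split $x\in[-l_2,l_2]$ (WLOG $x\ge0$, since $P$ and $\varphi$ are even) into three regimes, each handled by either a Lipschitz estimate or a direct piecewise computation. First I would extend $\varphi$ by zero to all of $\R$; the extension is $\tfrac{1}{l_1}$-Lipschitz since the nontrivial slopes are $\pm1/l_1$ and $\varphi(\pm l_2)=0$. By evenness of $P$ and the substitution $z=x-y$,
\[\int_{-l_2}^{l_2}P(x-y)\varphi(y)\,dy=\int_{\R}P(z)\varphi(x-z)\,dz=:I(x).\]
Given $\ep\in(0,1)$, pick $R=R(\ep)>0$ with $\int_{|z|>R}P(z)\,dz<\ep/4$. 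A key quantitative fact I would rely on is that integrability of $P$ gives $\int_{|z|\le l_1}|z|P(z)\,dz\le R+l_1\ep/4=o(l_1)$, which substitutes for a finite-first-moment hypothesis that {\bf(J)} does not provide.

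In the \emph{plateau case} $|x|\le l_2-l_1$ the function $\varphi(x)=1$, and the Lipschitz bound $\varphi(x-z)\ge1-|z|/l_1$ integrated against $P$ on $|z|\le R$ (the tail $|z|>R$ is dropped, being nonnegative) yields $I(x)\ge(1-\ep/4)-R/l_1\ge1-\ep$ once $l_1\ge CR/\ep$. In the \emph{mild slope case} $x=l_2-\delta$ with $\delta\in[l_1/2,l_1)$, we have $\varphi(x)\ge1/2$, and the same Lipschitz argument gives $I(x)\ge(1-\ep/4)\varphi(x)-R/l_1\ge(1-\ep)\varphi(x)$, since the $R/l_1$ correction is absorbed using $\varphi(x)\ge1/2$ whenever $l_1\ge CR/\ep$.

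The main obstacle is the \emph{sharp slope case} $x=l_2-\delta$ with $\delta\in[0,l_1/2]$, where $\varphi(x)=\delta/l_1$ may be arbitrarily small so the $R/l_1$ Lipschitz correction dominates and the naive bound fails. Here I would compute $I(l_2-\delta)$ directly by writing $\varphi(l_2-\delta-z)$ as a piecewise linear function of $z$; discarding the nonnegative contributions from the opposite slope and from the plateau leaves
\[I(l_2-\delta)\ge\frac{1}{l_1}\int_{-\delta}^{l_1-\delta}(z+\delta)P(z)\,dz.\]
Evenness of $P$ gives $\int_{-\delta}^{l_1-\delta}zP(z)\,dz=\int_{\delta}^{l_1-\delta}zP(z)\,dz\ge\delta\int_{\delta}^{l_1-\delta}P(z)\,dz$, and a short manipulation using $\int_{\R}P=1$ together with evenness produces the cancellation identity
\[\frac{l_1}{\delta}I(l_2-\delta)\ge\int_{-\delta}^{\delta}P(z)\,dz+2\int_{\delta}^{l_1-\delta}P(z)\,dz=1-2\int_{l_1-\delta}^{\yy}P(z)\,dz.\]
Since $l_1-\delta\ge l_1/2>R$ for $l_1>2R$, the right-hand side is at least $1-\ep$, giving $I(l_2-\delta)\ge(1-\ep)\varphi(l_2-\delta)$. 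This cancellation identity, which converts the delicate sharp-slope estimate into a pure tail bound on $P$, is the technical heart of the argument; combining the three cases yields the claim once $L_\ep$ is chosen large enough in terms of $\ep$ and $P$.
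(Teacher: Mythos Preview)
Your argument is correct. The three-regime split (plateau, mild slope, sharp slope) together with the global $1/l_1$-Lipschitz bound on the zero-extension of $\varphi$ handles the first two regimes cleanly, and your ``cancellation identity'' in the sharp-slope regime is the right idea: using evenness of $P$ to kill the $\int_{-\delta}^{\delta}zP(z)\,dz$ term and then bounding $z\ge\delta$ on $[\delta,l_1-\delta]$ converts everything into the tail estimate $1-2\int_{l_1-\delta}^{\infty}P$, which is $\ge 1-\ep$ once $l_1>2R$. All steps check out.

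Note, however, that the paper does not supply its own proof of this lemma: it simply cites \cite{DNwn} (Du and Ni, \emph{Nonlinearity} 2020). So there is no in-paper argument to compare against. Two remarks on your version: first, your proof never actually uses the hypothesis $l_2-l_1>L_\ep$; the Lipschitz bound $\varphi(x-z)\ge\varphi(x)-|z|/l_1$ is global and the sharp-slope computation only sees the near boundary, so you have in fact established a slightly stronger statement than the one quoted. Second, the sentence about $\int_{|z|\le l_1}|z|P(z)\,dz\le R+l_1\ep/4$ is true but never invoked downstream (you only use the cruder $\int_{|z|\le R}|z|P\le R$), so it can be dropped.
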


\begin{proof}[{\bf Proof of Theorem \ref{t1.2}:}]
Firstly, a simple comparison argument yields that $\limsup_{t\to\yy}u(t,x)\preceq {\bf u^*}$ uniformly in $\mathbb{R}$. Thus it remains to show the lower limitations of $u$. The discussion now is divided into two steps.

{\bf Step 1:} In this step, we prove $\liminf_{t\to\yy}u(t,x)\succeq {\bf u^*}$ uniformly in $[-s(t),s(t)]$ for any $0\le s(t)=t^{\frac{1}{\gamma-1}}o(1)$.

For small $\ep>0$, we define
\[\underline{h}(t)=(Kt+\theta)^{\frac{1}{\gamma-1}} ~ ~ {\rm and } ~ ~ \underline{u}(t,x)={\bf K_{\ep}}(1-\frac{|x|}{\underline{h}(t)}) ~ ~ ~ ~ {\rm for} ~ t\ge0, ~ x\in[-\underline{h}(t),\underline{h}(t)],\]
where ${\bf K_{\ep}}={\bf u^*}(1-\ep)$ and $K,\theta>0$ to be determined later.

Then we are going to verify that for any small $\ep>0$, there exist proper $K,T$ and $\theta$ such that
 \bes\label{2.1}
\left\{\begin{aligned}
&\underline u_t\preceq D\circ\dd\int_{-\underline h(t)}^{\underline h(t)}{\bf J}(x-y)\circ\underline u(t,y)\dy-D\circ\underline u+ F(\underline u), && t>0,~x\in(-\underline h(t),\underline h(t)),\\
&\underline u(t,\pm\underline h(t))\preceq {\bf0},&& t>0,\\
&\underline h'(t)\le\sum_{i=1}^{m_0}\mu_i\dd\int_{-\underline h(t)}^{\underline h(t)}\int_{\underline h(t)}^{\infty}
J_i(x-y)\underline u_i(t,x)\dy\dx,&& t>0,\\
&-\underline h'(t)\ge-\sum_{i=1}^{m_0}\mu_i\dd\int_{-\underline h(t)}^{\underline h(t)}\int_{-\yy}^{-\underline h(t)}
J_i(x-y)\underline u_i(t,x)\dy\dx,&& t>0,\\
&\underline h(0)\le h(T),\;\;\underline u(0,x)\preceq u(T,x),&& x\in[-\underline h(0),\underline h(0)].
\end{aligned}\right.
\ees
Once it is done, by comparison method we have
\[g(t+T)\le -\underline{h}(t), ~ ~ h(t+T)\ge\underline{h}(t) ~ ~ {\rm and } ~ ~ u(t+T,x)\succeq \underline{u}(t,x) ~ ~ {\rm for} ~ t\ge0, ~ x\in[-\underline{h}(t),\underline{h}(t)].\]
Moreover, for any $s(t)=t^{\frac{1}{\gamma-1}}o(1)$, direct computations show
\bess
\lim_{t\to\yy}\max_{|x|\le s(t)}\sum_{i=1}^{m}|\underline{u}_i(t,x)-(1-\ep)u^*_i|
=\lim_{t\to\yy}(1-\ep)\sum_{i=1}^{m}u^*_i\frac{s(t)}{\underline{h}(t)}=0,
\eess
which, together with our early conclusion and the arbitrariness of $\ep$, arrives at
\[\liminf_{t\to\yy}u(t,x)\succeq {\bf u^*} ~ ~ {\rm uniformly ~ in ~ }[-s(t),s(t)].\]
Hence the case $\gamma\in(1,2)$ is proved. The second inequality of \eqref{2.1} is obvious. We next show the third one. Simple calculations yield
\bess
&&\sum_{i=1}^{m_0}\mu_i\dd\int_{-\underline h(t)}^{\underline h(t)}\int_{\underline h(t)}^{\infty}
J_i(x-y)\underline u_i(t,x)\dy\dx\\
&&=\sum_{i=1}^{m_0}(1-\ep)\mu_iu^*_i\dd\int_{-\underline h(t)}^{\underline h(t)}\int_{\underline h(t)}^{\infty}
J_i(x-y)(1-\frac{|x|}{\underline{h}(t)})\dy\dx\\
&&\ge\sum_{i=1}^{m_0}(1-\ep)\mu_iu^*_i\dd\int_{0}^{\underline h(t)}\int_{\underline h(t)}^{\infty}
J_i(x-y)(1-\frac{x}{\underline{h}(t)})\dy\dx\\
&&=\sum_{i=1}^{m_0}\frac{(1-\ep)\mu_iu^*_i}{\underline{h}(t)}\dd\int_{-\underline h(t)}^{0}\int_{0}^{\infty}
J_i(x-y)(-x)\dy\dx\\
&&=\sum_{i=1}^{m_0}\frac{(1-\ep)\mu_iu^*_i}{\underline{h}(t)}\dd\int_{0}^{\underline h(t)}\int_{x}^{\infty}
J_i(y)x\dy\dx\\
&&=\sum_{i=1}^{m_0}\frac{(1-\ep)\mu_iu^*_i}{\underline{h}(t)}\bigg(\dd\int_{0}^{\underline h(t)}\int_{0}^{y}+\int_{\underline{h}(t)}^{\yy}\int_{0}^{\underline{h}(t)}\bigg)J_i(y)x\dx\dy\\
&&\ge\sum_{i=1}^{m_0}\frac{(1-\ep)\mu_iu^*_i}{2\underline{h}(t)}\int_{0}^{\underline h(t)}J_i(y)y^2\dy\ge\sum_{i=1}^{m_0}\frac{C_1(1-\ep)\mu_iu^*_i}{2\underline{h}(t)}\int_{\underline h(t)/2}^{\underline h(t)}y^{2-\gamma}\dy\\
&&\ge\tilde{C}_1(Kt+\theta)^{(2-\gamma)/(\gamma-1)}\ge\frac{K(Kt+\theta)^{(2-\gamma)/(\gamma-1)}}{\gamma-1}=\underline{h}'(t)
~ ~ {\rm if }~ K\le \tilde{C}_1(\gamma-1).
\eess
Since $J_i$ and $u$ are both symmetric in $x$, the fourth inequality of \eqref{2.1} also holds. Next we focus on the first one in \eqref{2.1}. We first claim that there is a constant $\hat{C}>0$ such that
\[\int_{-\underline h(t)}^{\underline h(t)}{\bf J}(x-y)\circ\underline u(t,y)\dy\succeq\hat{C}{\bf K_{\ep}}\underline h^{1-\gamma}(t) ~ ~ {\rm for } ~ t>0, ~ x\in[-\underline h(t),\underline h(t)].\]
Firstly, for $x\in[\underline{h}(t)/4,\underline{h}(t)]$ we have
\bess
&&\int_{-\underline h(t)}^{\underline h(t)}{\bf J}(x-y)\circ\underline u(t,y)\dy=\int_{-\underline h(t)-x}^{\underline h(t)-x}{\bf J}(y)\circ\underline u(t,x+y)\dy\\
&&\succeq {\bf K_{\ep}}\circ\int_{-\underline h(t)/4}^{-\underline h(t)/8}{\bf J}(y)(1-\frac{x+y}{\underline{h}(t)})\dy\succeq{\bf K_{\ep}}\circ\int_{-\underline h(t)/4}^{-\underline h(t)/8}{\bf J}(y)\frac{-y}{\underline{h}(t)}\dy\\
&&\succeq\frac{{\bf K_{\ep}}C_1}{\underline{h}(t)}\int_{\underline h(t)/8}^{\underline h(t)/4}y^{1-\gamma}\dy={\bf K_{\ep}}\hat C_1\underline{h}^{1-\gamma}(t).
\eess
For $x\in[0,\underline{h}(t)/4]$, we have
\bess
&&\int_{-\underline h(t)}^{\underline h(t)}{\bf J}(x-y)\circ\underline u(t,y)\dy=\int_{-\underline h(t)-x}^{\underline h(t)-x}{\bf J}(y)\circ\underline u(t,x+y)\dy\\
&&\succeq {\bf K_{\ep}}\circ\int_{\underline h(t)/8}^{\underline h(t)/4}{\bf J}(y)(1-\frac{x+y}{\underline{h}(t)})\dy\succeq \frac{{\bf K_{\ep}}}{\underline{h}(t)}\circ\int_{\underline h(t)/8}^{\underline h(t)/4}{\bf J}(y)y\dy\\
&&\succeq\frac{{\bf K_{\ep}}C_1}{\underline{h}(t)}\int_{\underline h(t)/8}^{\underline h(t)/4}y^{1-\gamma}\dy={\bf K_{\ep}}\hat C_1\underline{h}^{1-\gamma}(t).
\eess
Then our claim is verified since $J_i$ and $u$ are both even in $x$.

Define $\tilde{F}(\eta)=F(u^*_1\eta,\cdots,u^*_m\eta)$ for $\eta\in[0,1]$. By the assumptions on $F$, we easily show that there is a ${\bf C}\succ0$ such that
\[\tilde F(\eta)\succeq {\bf C}\min\{\eta,1-\eta\} ~ ~ {\rm for ~ any ~ }  \eta\in[0,1].\]
Hence there is a positive constant $\overline{C}$, depending only on $F$, such that
\bess
&&F(u^*_1(1-\ep)(1-\frac{|x|}{\underline{h}(t)}),\cdots,u^*_m(1-\ep)(1-\frac{|x|}{\underline{h}(t)}))\\
&&\succeq
(1-\frac{|x|}{\underline{h}(t)})F(u^*_1(1-\ep),\cdots,u^*_m(1-\ep))\\
&&\succeq (1-\frac{|x|}{\underline{h}(t)})\ep{\bf C}\succeq \overline{C}\ep\underline{u}.
\eess
By Lemma \ref{l2.1}, one can let $\theta$ large sufficiently such that
\[\int_{-\underline h(t)}^{\underline h(t)}{\bf J}(x-y)\circ\underline u(t,y)\dy\succeq (1-\ep^2)\underline{u}(t,x) ~ ~ {\rm for }~ t>0, ~ x\in[-\underline{h}(t),-\underline{h}(t)].\]
Therefore
\bess
&&D\circ\dd\int_{-\underline h(t)}^{\underline h(t)}{\bf J}(x-y)\circ\underline u(t,y)\dy-D\circ\underline u+ F(\underline u)\\
&&=\frac{\overline{C}\ep{\bf1}}{2}\circ\dd\int_{-\underline h(t)}^{\underline h(t)}{\bf J}(x-y)\circ\underline u(t,y)\dy+\bigg(D-\frac{\overline{C}\ep{\bf1}}{2}\bigg)\circ\int_{-\underline h(t)}^{\underline h(t)}{\bf J}(x-y)\circ\underline u(t,y)\dy-D\circ\underline u+ F(\underline u)\\
&&\succeq \frac{\overline{C}\ep{\bf1}}{2}\circ\dd\int_{-\underline h(t)}^{\underline h(t)}{\bf J}(x-y)\circ\underline u(t,y)\dy+(1-\ep^2)\bigg(D-\frac{\overline{C}\ep{\bf1}}{2}\bigg)\circ\underline{u}(t,x)-D\circ\underline u+ \overline{C}\ep\underline{u}\\
&&\succeq \frac{\overline{C}\ep{\bf1}}{2}\circ\dd\int_{-\underline h(t)}^{\underline h(t)}{\bf J}(x-y)\circ\underline u(t,y)\dy\succeq \frac{\overline{C}\ep}{2}{\bf K_{\ep}}\hat C_1\underline{h}^{1-\gamma}(t)\succeq \frac{K\underline{h}^{1-\gamma}(t)}{\gamma-1}{\bf K_{\ep}}\succeq \underline{u}_t
\eess
provided that $\ep$ and $K$ are small suitably. So the first inequality in \eqref{2.1} holds. For $K,\theta$ and $\ep$ as chosen above, noting that spreading happens we can find some $T>0$ such that
\[-\underline h(0)\ge g(T), ~ ~ \underline{h}(0)\le h(T), ~ ~ {\rm and } ~ ~ \underline{u}(0,x)\preceq{\bf K_{\ep}}\preceq u(T,x) ~ {\rm for ~ }x\in[-\underline{h}(0),\underline{h}(0)].\]
Therefore, \eqref{2.1} holds. So we finish Step 1.

{\bf Step 2:} We next show that $\liminf_{t\to\yy}u(t,x)\succeq {\bf u^*}$ uniformly in $[-s(t),s(t)]$ for any $0\le s(t)=t\ln t o(1)$.

For small $\ep>0$, we define
\[\underline{h}(t)=K(t+\theta)\ln(t+\theta) ~ ~ {\rm and } ~ ~ \underline{u}(t,x)={\bf K_{\ep}}\min\{1,\frac{\underline{h}(t)-|x|}{(t+\theta)^{1/2}}\} ~ ~ ~ ~ {\rm for} ~ t\ge0, ~ x\in[-\underline{h}(t),\underline{h}(t)],\]
where ${\bf K_{\ep}}={\bf u^*}(1-\ep)$ and $K,\theta>0$ to be determined later.

Now we are ready to prove
\bes\label{2.2}
\left\{\begin{aligned}
&\underline u_t\preceq D\circ\dd\int_{-\underline h(t)}^{\underline h(t)}{\bf J}(x-y)\circ\underline u(t,y)\dy-D\circ\underline u+ F(\underline u), &&\hspace{-7mm} t>0,x\in(-\underline h(t),\underline h(t))\setminus\{\underline{h}(t)-(t+\theta)^{1/2}\},\\
&\underline u(t,\pm\underline h(t))\preceq{\bf0},&&\hspace{-7mm} t>0,\\
&\underline h'(t)\le\sum_{i=1}^{m_0}\mu_i\dd\int_{-\underline h(t)}^{\underline h(t)}\int_{\underline h(t)}^{\infty}
J_i(x-y)\underline u_i(t,x)\dy\dx,&&\hspace{-7mm} t>0,\\
&-\underline h'(t)\ge-\sum_{i=1}^{m_0}\mu_i\dd\int_{-\underline h(t)}^{\underline h(t)}\int_{-\yy}^{-\underline h(t)}
J_i(x-y)\underline u_i(t,x)\dy\dx,&&\hspace{-4mm}~ t>0,\\
&\underline h(0)\le h(T),\;\;\underline u(0,x)\preceq u(T,x),&& \hspace{-7mm}x\in[-\underline h(0),\underline h(0)].
\end{aligned}\right.
\ees
If \eqref{2.2} holds, then by comparison argument we see
\[g(t+T)\le -\underline{h}(t), ~ ~ h(t+T)\ge\underline{h}(t) ~ ~ {\rm and } ~ ~ u(t+T,x)\succeq \underline{u}(t,x) ~ ~ {\rm for} ~ t\ge0, ~ x\in[-\underline{h}(t),\underline{h}(t)].\]
We first deal with the third inequality in \eqref{2.2}. Careful computations show
\bess
&&\sum_{i=1}^{m_0}\mu_i\dd\int_{-\underline h(t)}^{\underline h(t)}\int_{\underline h(t)}^{\infty}
J_i(x-y)\underline u_i(t,x)\dy\dx\\
&&\ge\sum_{i=1}^{m_0}\mu_iu^*_i(1-\ep)\dd\int_{0}^{\underline h(t)-(t+\theta)^{1/2}}\int_{\underline h(t)}^{\infty}
J_i(x-y)\dy\dx\\
&&=\sum_{i=1}^{m_0}\mu_iu^*_i(1-\ep)\dd\int_{-\underline h(t)}^{-(t+\theta)^{1/2}}\int_{0}^{\infty}
J_i(x-y)\dy\dx\\
&&=\sum_{i=1}^{m_0}\mu_iu^*_i(1-\ep)\dd\int_{(t+\theta)^{1/2}}^{\underline h(t)}\int_{x}^{\infty}
J_i(y)\dy\dx\\
&&=\sum_{i=1}^{m_0}\mu_iu^*_i(1-\ep)\dd\bigg(\int_{(t+\theta)^{1/2}}^{\underline h(t)}\int_{(t+\theta)^{1/2}}^{y}+\int_{\underline{h}(t)}^{\yy}\int_{(t+\theta)^{1/2}}^{\underline{h}(t)}\bigg)
J_i(y)\dx\dy\\
&&\ge\sum_{i=1}^{m_0}\mu_iu^*_i(1-\ep)\int_{(t+\theta)^{1/2}}^{\underline h(t)}\int_{(t+\theta)^{1/2}}^{y}J_i(y)\dx\dy\\
&&\ge\sum_{i=1}^{m_0}\mu_iu^*_i(1-\ep)C_1\int_{(t+\theta)^{1/2}}^{\underline h(t)}\frac{y-(t+\theta)^{1/2}}{y^2}\dy\\
&&\ge \sum_{i=1}^{m_0}C_1\mu_iu^*_i(1-\ep)\bigg(\ln \underline{h}(t)-\frac{\ln(t+\theta)}{2}+\frac{(t+\theta)^{1/2}}{\underline{h}(t)}-1\bigg)\\
&&\ge \sum_{i=1}^{m_0}C_1\mu_iu^*_i(1-\ep)\bigg(\frac{\ln(t+\theta)}{2}+\frac{1}{2}+\ln K+\ln(\ln (t+\theta))-\frac{3}{2}\bigg)\\
&&\ge \sum_{i=1}^{m_0}\frac{C_1\mu_iu^*_i(1-\ep)}{2}\big(\ln(t+\theta)+1\big)\ge K\big(\ln(t+\theta)+1\big)=\underline{h}'(t)
\eess
provided that $\theta$ is large enough and $K$ is small suitably. By the symmetry of $J_i$ and $\underline{u}$ on $x$, it is easy to show that the fourth one in \eqref{2.2} also holds.

Now we verify the first inequality of \eqref{2.2}, and first claim that for $x\in[-\underline{h}(t),-\underline{h}(t)+(t+\theta)^{1/2}]\cup[\underline{h}(t)-(t+\theta)^{1/2},\underline{h}(t)]$,
 there is a positive constant $\tilde{C}_1$ such that
 \[\int_{-\underline h(t)}^{\underline h(t)}{\bf J}(x-y)\circ\underline u(t,y)\dy\succeq\frac{\tilde{C}_1\ln(t+\theta)}{4(t+\theta)^{1/2}}{\bf K_{\ep}}.\]
 Direct calculations show
 \bess
 &&\int_{-\underline h(t)}^{\underline h(t)}{\bf J}(x-y)\circ\underline u(t,y)\dy\succeq{\bf K_{\ep}}\circ\int_{\underline h(t)-(t+\theta)^{1/2}}^{\underline h(t)}{\bf J}(x-y)\frac{\underline{h}(t)-y}{(t+\theta)^{1/2}}\dy\\
 &&=\frac{{\bf K_{\ep}}}{{(t+\theta)^{1/2}}}\circ\int_{\underline h(t)-(t+\theta)^{1/2}-x}^{\underline h(t)-x}{\bf J}(y)[\underline{h}(t)-x-y]\dy.
 \eess
 Then for $x\in[\underline{h}(t)-\frac{3}{4}(t+\theta)^{1/2},\underline{h}(t)]$, we have
 \bess
 &&\frac{{\bf K_{\ep}}}{{(t+\theta)^{1/2}}}\circ\int_{\underline h(t)-(t+\theta)^{1/2}-x}^{\underline h(t)-x}{\bf J}(y)[\underline{h}(t)-x-y]\dy\succeq \frac{{\bf K_{\ep}}}{{(t+\theta)^{1/2}}}\circ\int_{-(t+\theta)^{1/2}/4}^{-(t+\theta)^{1/4}/4}{\bf J}(y)(-y)\dy\\
 &&\succeq\frac{C_1{\bf K_{\ep}}}{{(t+\theta)^{1/2}}}\int_{-(t+\theta)^{1/2}/4}^{-(t+\theta)^{1/4}/4}(-y)^{-1}\dy=
 \frac{C_1{\bf K_{\ep}}}{(t+\theta)^{1/2}}\int_{(t+\theta)^{1/4}/4}^{(t+\theta)^{1/2}/4}y^{-1}\dy\\
 &&=\frac{C_1{\bf K_{\ep}}\ln(t+\theta)}{4(t+\theta)^{1/2}}
 \eess
 For $x\in[\underline{h}(t)-(t+\theta)^{1/2},\underline{h}(t)-\frac{3}{4}(t+\theta)^{1/2}]$, we obtain
 \bess
 &&\frac{{\bf K_{\ep}}}{{(t+\theta)^{1/2}}}\circ\int_{\underline h(t)-(t+\theta)^{1/2}-x}^{\underline h(t)-x}{\bf J}(y)[\underline{h}(t)-x-y]\dy\succeq \frac{{\bf K_{\ep}}}{(t+\theta)^{1/2}}\circ\int_{(t+\theta)^{1/4}/4}^{(t+\theta)^{1/2}/4}{\bf J}(y)[\underline{h}(t)-x-y]\dy\\
 &&\succeq\frac{{\bf K_{\ep}}C_1}{(t+\theta)^{1/2}}\int_{(t+\theta)^{1/4}/4}^{(t+\theta)^{1/2}/4}y^{-1}\dy=\frac{C_1{\bf K_{\ep}}\ln(t+\theta)}{4(t+\theta)^{1/2}}.
 \eess
 By the symmetry of $J_i$ and $\underline{u}$ again, we immediately complete the proof of our claim.

 By Lemma \ref{l2.2} with $l_2=\underline{h}(t)$ and $l_1=(t+\theta)^{1/2}$, we have
 \[\int_{-\underline h(t)}^{\underline h(t)}{\bf J}(x-y)\circ\underline u(t,y)\dy\succeq(1-\ep^2)\underline{u}(t,x) ~ ~ {\rm for ~ }t>0,~ x\in[-\underline{h}(t),\underline{h}(t)].\]
 Similarly to Step 1, there exists a positive constant $\overline{C}$ such that
 \[F(\underline{u})\succeq \overline{C}\ep \underline{u}~ ~ {\rm for ~ }t>0,~ x\in[-\underline{h}(t),\underline{h}(t)].\]
 Therefore, for $x\in[-\underline{h}(t),-\underline{h}(t)+(t+\theta)^{1/2}]\cup[\underline{h}(t)-(t+\theta)^{1/2},\underline{h}(t)]$, we similarly derive
 \bess
 &&D\circ\dd\int_{-\underline h(t)}^{\underline h(t)}{\bf J}(x-y)\circ\underline u(t,y)\dy-D\circ\underline u+ F(\underline u)\\
 &&\succeq \frac{\overline{C}\ep{\bf1}}{2}\circ\dd\int_{-\underline h(t)}^{\underline h(t)}{\bf J}(x-y)\circ\underline u(t,y)\dy\succeq\frac{\tilde{C}_1\overline{C}\ep\ln(t+\theta)}{8(t+\theta)^{1/2}}{\bf K_{\ep}}\\
 &&\succeq\frac{2K\ln(t+\theta)}{(t+\theta)^{1/2}}{\bf K_{\ep}}\succeq \underline{u}_t
 \eess
 provided that $\ep$ and $K$ are small enough, and $\theta$ is large suitably.
 For $|x|\le \underline{h}(t)-(t+\theta)^{1/2}$, we have
 \bess
D\circ\dd\int_{-\underline h(t)}^{\underline h(t)}{\bf J}(x-y)\circ\underline u(t,y)\dy-D\circ\underline u+ F(\underline u)\succeq \frac{\overline{C}\ep{\bf1}}{2}\circ\dd\int_{-\underline h(t)}^{\underline h(t)}{\bf J}(x-y)\circ\underline u(t,y)\dy\succeq{\bf0}\succeq\underline{u}_t.
 \eess
 Thus the first inequality of \eqref{2.2} is obtained.

Since spreading happens for \eqref{1.1}, for $\ep$, $\theta$ and $K$ as chosen above, we can choose $T>0$ properly such that $-\underline{h}(0)\ge g(T)$, $\underline{h}(0)\le h(T)$ and $\underline{u}(0,x)\preceq {\bf u^*}(1-\ep)\preceq u(T,x)$ for $x\in[-\underline{h}(0),\underline{h}(0)]$. So \eqref{2.2} is proved, and Step 2 is complete. The desired result directly follows from our early conclusions.
\end{proof}

\section{Proofs of Theorem \ref{t1.3}, Theorem \ref{t1.4} and Theorem \ref{t1.5}}
\setcounter{equation}{0} {\setlength\arraycolsep{2pt}
We first show the limitations of solution of semi-wave problem \eqref{1.3}-\eqref{1.4}, namely, Theorem \ref{t1.3}.

\begin{proof}[{\bf Proof of Theorem \ref{t1.3}:}]
We first prove the result with {\bf (J2)} holding. By some comparison considerations, we have that $c_{\mu_i}$ is nondecreasing in $\mu_i>0$.
Noticing $c_{\mu_i}<C_*$, we can define $C_{\yy}=\lim_{\mu_i\to\yy}c_{\mu_i}\le C_*$. Next we show that $\lim_{\mu_i\to\yy}l_{\mu_i}=\yy$. Clearly,
\bes\label{3.1}
0\le\int_{-\yy}^{0}\int_{0}^{\yy}J_i(x-y)\phi^{c_{\mu_i}}_i(x)\dy\dx\le \frac{c_{\mu_i}}{\mu_i}\le\frac{C_*}{\mu_i}.
\ees
Assume that $J_i$ does not have compact support set. Hence, for any $n>0$, by \eqref{1.4} we see
\bess
\frac{C_*}{\mu_i}&\ge&\int_{-\yy}^{0}\int_{0}^{\yy}J_i(x-y)\phi^{c_{\mu_i}}_i(x)\dy\dx\ge\int_{-n-1}^{-n}\phi^{c_{\mu_i}}_i(x)\int_{n+1}^{\yy}J_i(y)\dy\dx\\
&\ge&\phi^{c_{\mu_i}}_i(-n)\int_{n+1}^{\yy}J_i(y)\dy\ge0,
\eess
which implies that $\lim_{\mu_i\to\yy}\phi^{c_{\mu_i}}_i(-n)=0$. Noting that $\phi^{c_{\mu_i}}_i(x)$ is decreasing in $x\le0$,
we have that $\lim_{\mu_i\to\yy}\phi^{c_{\mu_i}}_i(x)=0$ locally uniformly in $(-\yy,0]$, which yields $\lim_{\mu_i\to\yy}l_{\mu_i}=\yy$.

Assume that $J_i$ is compactly supported, and $[-L,L]$ is the smallest set which contains the support set of $J_i$.
Combining \eqref{3.1} and the uniform boundedness of ${\phi^{c_{\mu_i}}_i}'(x)$,
one easily has that $\lim_{\mu_i\to\yy}\phi^{c_{\mu_i}}_i(x)=0$ locally uniformly in $[-L,0]$. Since ${\Phi^{c_{\mu_i}}}'$ is uniformly bounded about $\mu_i>1$,
it follows from a compact argument that there is a sequence $\mu^n_i\to\yy$ and a nonincreasing function $\Phi_{\yy}=(\phi^{\yy}_i)\in [C((-\yy,0])]^m$
such that $\Phi^{c_{\mu^n_i}}\to\Phi_{\yy}$ locally uniformly in $(-\yy,0]$ as $n\to\yy$. Clearly, $\Phi_{\yy}\in[{\bf0},{\bf u^*}]$. By the dominated convergence theorem,
\[D\circ\dd\int_{-\yy}^{0}{\bf J}(x-y)\circ \Phi_{\yy}(t,y)\dy-D\circ \Phi_{\yy}+c\Phi_{\yy}'(x)+F(\Phi_{\yy})=0, ~ ~ -\yy<x<0.\]
Thus
 \bes\label{3.2}
 d_i\dd\int_{-\yy}^{0}J_i(x-y)\phi^{\yy}_i(y)\dy-d_i\phi^{\yy}_i+c_{\mu_i}{\phi^{\yy}_i}'+f_i(\phi^{\yy}_1,\phi^{\yy}_2,\cdots,\phi^{\yy}_m)=0, \quad -\yy<x<0.
 \ees
Moreover, $\phi^{\yy}_i(x)=0$ in $[-L,0]$. If $\phi^{\yy}_i(x)\not\equiv0$ for $x\le0$, there exists $L_1\le-L$ such that $\phi^{\yy}_i(L_1)=0<\phi^{\yy}_i(x)$ in $(-\yy,L_1)$.
By \eqref{3.2}, {\bf(J)} and the assumptions on $F$, we have
\[0=d_i\int_{-\yy}^{0}J_i(L_1-y)\phi^{\yy}_i(y)\dy+f_i(\underbrace{\phi^{\yy}_1(L_1),\cdots,0}_{i},\cdots,\phi^{\yy}_m(L_1))>0,\]
which implies that $\phi^{\yy}_i(x)\equiv0$ for $x\le0$. Hence $\lim_{\mu_i\to\yy}l_{\mu_i}=\yy$.

Notice that $\hat{\Phi}^{c_{\mu_i}}$ and $\hat\Phi^{c_{\mu_i}}$$'$ are uniformly bounded for $\mu_i>1$ and $x\le -l_{\mu_i}$.
By a compact consideration again, for any sequence $\mu^n_i\to\yy$, there exists a subsequence, denoted by itself, such that
  $\lim_{n\to\yy}\hat{\Phi}^{c_{\mu^n_i}}=\hat{\Phi}^{\yy}$ locally uniformly in $\mathbb{R}$
  for some nonincreasing and continuous function $\hat{\Phi}^{\yy}\in[{\bf0},{\bf u^*}]$.
  Moreover, $\hat{\Phi}^{\yy}(0)=(u^*_1/2,\hat{\phi}^{\yy}_2(0),\cdots, \hat{\phi}^{\yy}_m(0))$. Using the dominated convergence theorem again, we have
  \[D\circ\dd\int_{\mathbb{R}}{\bf J}(x-y)\circ \hat\Phi^{\yy}(y)\dy-D\circ \hat\Phi^{\yy}+C_{\yy}\hat{\Phi}^{\yy '}+F(\hat\Phi^{\yy})=0, ~ ~ -\yy<x<\yy.\]
Together with the properties of $\hat{\Phi}^{\yy}$ and the assumptions on $F$,
we easily derive that $\hat{\Phi}^{\yy}(-\yy)={\bf u^*}$ and $\hat{\Phi}^{\yy}(\yy)={\bf 0}$. Thus, $(C_{\yy},\hat{\Phi}^{\yy})$ is a solution pair of \eqref{1.5}. By Theorem \ref{B},
$C_*$ is the minimal speed of \eqref{1.5}. Noticing that $C_{\yy}\le C_*$, we derive that $C_{\yy}=C_*$ and $\hat{\Phi}^{\yy}=\Psi$.
By the arbitrariness of sequence $\mu^n_{i}$, we have that $\hat{\Phi}^{c_{\mu_i}}(x)\to\Psi(x)$ locally uniformly in $\mathbb{R}$ as $\mu_i\to\yy$.

We now show that if {\bf(J2)} does not hold, then $c_{\mu_i}\to\yy$ as $\mu_i\to\yy$.
Since $c_{\mu_i}$ is nondecreasing in $\mu_i>0$, we have that $\lim_{\mu_i\to\yy}c_{\mu_i}:=C_{\yy}\in(0,\yy]$.
Arguing indirectly, we assume that $C_{\yy}\in(0,\yy)$. Then following the similar lines in previous arguments,
we can prove that \eqref{1.5} has a solution pair $(C_{\yy},\Phi_{\yy})$ with $\Phi_{\yy}$ nonincreasing,
$\Phi_{\yy}(-\yy)={\bf u^*}$ and $\Phi_{\yy}(\yy)={\bf 0}$. This is a contradiction to Theorem \ref{B}. So $C_{\yy}=\yy$. The proof is complete.
\end{proof}
Then we give the proof of Theorem \ref{t1.4}.
\begin{proof}[{\bf Proof of Theorem \ref{t1.4}:}]

 (i) Since {\bf (J2)} holds, problem \eqref{1.5} has a solution pair $(C_*,\Psi_{C_*})$ with $C_*>0$ and $\Psi_{C_*}$ nonincreasing in $\mathbb{R}$.
We first claim that $\Psi_{C_*}=(\psi_i)\succ{\bf 0}$ and $\Psi_{C_*}$ is monotonically decreasing in $\mathbb{R}$. For any $i\in\{1,2,\cdots,m_0\}$ and $l>0$,
define $\tilde{\psi_i}(x)=\psi_i(x-l)$. Then applying \cite[Lemma 2.2]{DN21} to $\tilde\psi$, we derive that $\psi_i(x)>0$ for $x<l$. By the arbitrariness
of $l>0$, we have $\psi_i(x)>0$ for $x\in\mathbb{R}$. Then for $i\in\{m_0+1,\cdots,m_0\}$, it follows from the assumptions on $F$ that $\psi'_i(x)<0$ in $\mathbb{R}$.
Thus $\psi_i(x)>0$ in $\mathbb{R}$ for $i\in\{m_0+1,\cdots,m_0\}$. To show the monotonicity of $\Psi_{C_*}$, it remains to verify that $\psi_i$ is decreasing in
$\mathbb{R}$ for every $i\in\{1,\cdots,m_0\}$. For any $\delta>0$, define $w(x)=\psi_i(x-\delta)-\psi_i(x)$ for any $i\in\{1,\cdots,m_0\}$.
Clearly, $w(x)\ge0$ in $\mathbb{R}$ and $w(x)\not\equiv0$ for $x<0$. By \eqref{1.5}, $w(x)$ satisfies
\[d_i\int_{-\yy}^{\yy}J_i(x-y)w(y)\dy-d_iw(x)+C_*w'(x)+q(x)w(x)\le0, ~ ~ x\in\mathbb{R}.\]
By \cite[Lemma 2.5]{DLZ}, $w(x)>0$ in $x<0$, and so $\psi_i(x)$ is decreasing in $x<0$. As before, for any $l>0$,
define $\tilde{\psi}_i(x)=\psi_i(x-l)$. Similarly, we can show that $\psi_i(x)$ is decreasing in $x<l$. Thus, our claim is verified.

Define $\bar{U}=K\Psi_{C_*}(x-C_*t)$ with $K\gg1$. We next show that $\bar{U}$ is an upper solution of \eqref{1.2}. In view of the assumptions on $U(0,x)$ and our above analysis,
 there is $K\gg1$ such that $\bar{U}(0,x)=K\Psi_{C_*}(x)\succeq U(0,x)$ in $\mathbb{R}$. Moreover, by {\bf (f1)},
 we have $KF(\Psi_{C_*}(x-C_*t))\succeq F(K\Psi_{C_*}(x-C_*t))$, and thus
 \bess
 \bar{U}_t=-C_*K\Psi'_{C_*}(x-C_*t)\succeq D\circ\int_{-\yy}^{\yy}{\bf J}(x-y)\circ \bar{U}(t,y)\dy-D\circ \bar{U}+F(\bar{U}).
 \eess
It then follows from a comparison argument that $U(t,x)\preceq\bar{U}(t,x)$ for $t\ge0$ and $x\in\mathbb{R}$.
Noticing the properties of  $\psi_i$, for any $\lambda\in(0,u^*_i)$ there is a unique $y_*\in\mathbb{R}$ such that $K\psi_i(y_*)=\lambda$. Therefore, it is easy to
 see that
 \bes \label{3.3}
 x^{-}_{i,\lambda}(t)\le x^{+}_{i,\lambda}(t)\le y_*+C_*t.
 \ees

 Similarly, we can prove that for suitable $K_1\gg1$, $K_1\Psi(-x-C_*t)$ is also an upper solution of \eqref{1.5},
 and there is a unique $\tilde y_*\in\mathbb{R}$ such that $K_1\psi_i(\tilde y_*)=\lambda$.
 Then one easily derive $-\tilde y_*-C_*t\le x^{-}_{i,\lambda}(t)\le x^{+}_{i,\lambda}(t)$. This together with \eqref{3.3} arrives at
 \[\limsup_{t\to\yy}\frac{|x^{-}_{i,\lambda}(t)|}{t}\le \limsup_{t\to\yy}\frac{|x^{+}_{i,\lambda}(t)|}{t}\le C_*.\]

 Next we claim that
  \[\liminf_{t\to\yy}\frac{|x^{+}_{i,\lambda}(t)|}{t}\ge \liminf_{t\to\yy}\frac{|x^{-}_{i,\lambda}(t)|}{t}\ge C_*.\]
We assume $\mu_1>0$, and fix other $\mu_i$. Denote the unique solution of \eqref{1.1} by $(u^{\mu_1},g^{\mu_1},h^{\mu_1})$. By a comparison consideration, we have
$U(t,x)\succeq u^{\mu_1}$ in $[0,\yy)\times[g^{\mu_1}(t),h^{\mu_1}(t)]$ for any $\mu_1>0$.
Moreover, we can choose $\mu_1$ large sufficiently, say $\mu_1>\tilde{\mu}>0$,
so that spreading happens for $(u^{\mu_1},g^{\mu_1},h^{\mu_1})$. Moreover, by Theorem \ref{A}, we have
\[\lim_{t\to\yy}\frac{-g^{\mu_1}}{t}=\lim_{t\to\yy}\frac{h^{\mu_1}}{t}=c_0.\]
To stress the dependence of $c_0$ on $\mu_1$, we rewrite $c_0$ as $c^{\mu_1}$. By Theorem \ref{t1.3}, $\lim_{\mu_1\to\yy}c^{\mu_1}=C_*$.
For any $\lambda\in(0,u^*_i)$, we choose $\delta$ small enough such that $\lambda<u^*_i-\delta$. Then by virtue of Theorem \ref{t1.1}, for any $0<\ep\ll1$, there is $T>0$ such that
\[\lambda<u^*_i-\delta\le u^{\mu_1}_i\le u^*_i+\delta, ~ ~ {\rm for} ~ t\ge T, ~ ~ |x|\le (c^{\mu_1}-\ep)t,\]
which obviously implies that $x^{-}_{i,\lambda}(t)\le-(c^{\mu_1}-\ep)t$ and $x^{+}_{i,\lambda}(t)\ge (c^{\mu_1}-\ep)t$.
Due to the arbitrariness of $\ep$ and $\mu_1$, our claim is proved. Since $K\Psi_{C_*}(x-C_*t)$ and $K_1\Psi(-x-C_*t)$ are the upper solution of \eqref{1.5},
it is easy to prove the second limitation of \eqref{1.6}. Thus \eqref{1.6} is obtained.

Now we prove \eqref{1.7}. Let $\bar{u}$ be the solution of
\[\bar{u}_t=F(\bar{u}), ~ ~ ~ ~ \bar{u}(0)=(\max\{\|u_{i0}(x)\|_{C([-h_0,h_0])},u^*_i\}).\]
By {\bf (f4)} and comparison principle, we derive that $\limsup_{t\to\yy}U(t,x)\preceq {\bf u^*}$ uniformly in $\mathbb{R}$. As before, for any $c\in (0,C_*)$, let $\mu_1>\tilde{\mu}$ large enough such that $c<c^{\mu_1}$. Using Theorem \ref{t1.1} and comparison principle, we see
$\liminf_{t\to\yy}U(t,x)\succeq {\bf u^*}$ uniformly in $x\in[-ct,ct]$ which, combined with our early conclusion, yields the desired result.

Moreover, since $K\Psi_{C_*}(x-C_*t)\succeq U(t,x)$ and $K_1\Psi(-x-C_*t)\succeq U(t,x)$ for $t\ge0$ and $x\in\mathbb{R}$, we have for any $c>C_*$,
\bess
0&\le&\sup_{|x|\ge ct}\dd\sum_{i=1}^{m}U_i(t,x)\le\sup_{x\ge ct}\dd\sum_{i=1}^{m}U_i(t,x)+\sup_{x\le -ct}\dd\sum_{i=1}^{m}U_i(t,x)\\
&\le&\sup_{x\ge ct}\dd\sum_{i=1}^{m}K\psi_i(x-C_*t)+\sup_{x\le -ct}\dd\sum_{i=1}^{m}K_1\psi_i(-x-C_*t)\\
&=&(K+K_1)\dd\sum_{i=1}^{m}\psi_i(ct-C_*t)\to0 ~  {\rm as } ~ t\to\yy.
\eess
Therefore, conclusion (i) is proved.

(ii) We now assume that {\bf (J2)} does not hold, but {\bf (J1)} holds. By Theorem \ref{t1.3}, $\lim_{\mu_1\to\yy}c^{\mu_1}=\yy$.
Thanks to the above arguments, we have $x^{-}_{i,\lambda}(t)\le-(c^{\mu_1}-\ep)t$ and $x^{+}_{i,\lambda}(t)\ge (c^{\mu_1}-\ep)t$.
Letting $\mu_1\to\yy$ and $\ep\to0$, we have $\lim_{t\to\yy}\frac{|x^{\pm}_{i,\lambda}|}{t}=\yy$. We next prove \eqref{1.9}. For any $c>0$, let $\mu_1$ large
enough such that $c^{\mu_1}>c$ and spreading happens for $(u^{\mu_1},g^{\mu_1},h^{\mu_1})$.
By a comparison argument and Theorem \ref{t1.1}, we see $\liminf_{t\to\yy}U(t,x)\succeq {\bf u^*}$ uniformly in $|x|\le ct$. Together with our previous result,
we obtain \eqref{1.9}.

We now suppose that {\bf (J1)} does not hold. It then follows from Theorem \ref{t1.1} that for any $c>0$, there is $T>0$ such that
\[\lambda<u^*_i-\delta\le u^{\mu_1}_i\le u^*_i+\delta, ~ ~ {\rm for} ~ t\ge T, ~ ~ |x|\le ct,\]
which clearly indicates \eqref{1.8}. As for \eqref{1.9}, by Theorem \ref{t1.1} again, we see $\liminf_{t\to\yy}U(t,x)\succeq {\bf u^*}$ uniformly in $|x|\le ct$.

(iii) As above, $(u^{\mu_1},g^{\mu_1},h^{\mu_1})$ is a lower solution to \eqref{1.2}. By Theorem \ref{t1.2} and our early conclusion, we immediately derive the desired result. Thus the proof is complete.
\end{proof}
Finally, we show the proof of Theorem \ref{t1.5}.
\begin{proof}[{\bf Proof of Theorem \ref{t1.5}:}] It clearly follows from comparison principles (see \cite[Lemmas 3.1 and 3.2]{DN21}) that $(u_{\mu_i},g_{\mu_i},h_{\mu_i})$ is monotonically nondecreasing in $\mu_i>0$. Hence we can define $G(t)=\lim_{\mu_i\to\yy}g_{\mu_i}(t)\in[-\yy,-h_0]$, $H(t)=\lim_{\mu_i\to\yy}h_{\mu_i}(t)\in[h_0,\yy]$ and $\hat{U}(t,x)=\lim_{\mu_i\to\yy}u_{\mu_i}(t,x)\le U(t,x)$ for $t>0$ and $G(t)<x<H(t)$. Here $u_{\mu_i}=(u^j_{\mu_i})$ and $\hat{U}=(\hat{U}_j)$ with $j\in\{1,2,\cdots,m\}$.

We now claim that $G(t)=-\yy$ and $H(t)=\yy$ for all $t>0$. We only prove the former since the latter can be handled by similar arguments. Arguing indirectly, assume that there is $t_0>0$ such that $G(t_0)>-\yy$. Then $-h_0\ge g_{\mu_i}(t)\ge G(t)\ge G(t_0)>-\yy$ for $t\in(0,t_0]$. By {\bf(J)}, there are small $\ep_1,\delta>0$ such that $J_i(|x|)>\ep_1$ for $|x|\le2\delta$. Therefore, for $t\in(0,t_0]$, we have
\bess
g'_{\mu_i}(t)&=&-\sum_{j=1}^{m_0}\mu_j\int_{g_{\mu_i}(t)}^{h_{\mu_i}(t)}\int_{-\yy}^{g_{\mu_i}(t)}J_j(x-y)u^j_{\mu_i}(t,x){\rm d}y{\rm d}x\\
&\le&-\mu_i\int_{g_{\mu_i}(t)}^{h_{\mu_i}(t)}\int_{-\yy}^{g_{\mu_i}(t)}J_i(x-y)u^i_{\mu_i}(t,x){\rm d}y{\rm d}x\le-\mu_i\int_{g_{\mu_i}(t)}^{g_{\mu_i}(t)+\delta}\int_{g_{\mu_i}(t)-\delta}^{g_{\mu_i}(t)}J_i(x-y)u^i_{\mu_i}(t,x){\rm d}y{\rm d}x\\
&\le&-\mu_i\ep_1\delta\int_{g_{\mu_i}(t)}^{g_{\mu_i}(t)+\delta}u^i_{\mu_i}(t,x){\rm d}x.
\eess
By the dominated convergence theorem, we see
\[\lim_{\mu_i\to\yy}\int_{g_{\mu_i}(t)}^{g_{\mu_i}(t)+\delta}u^i_{\mu_i}(t,x){\rm d}x=\int_{G(t)}^{G(t)+\delta}\hat{U}_i(t,x){\rm d}x>0 ~ ~ {\rm for } ~ t\in(0,t_0].\]
Then
\[-\frac{g_{\mu_i}(t_0)+h_0}{\mu_i}\ge\ep_1\delta\int_{0}^{t_0}\int_{g_{\mu_i}(t)}^{g_{\mu_i}(t)+\delta}u^i_{\mu_i}(t,x){\rm d}x\dt\to \ep_1\delta\int_{0}^{t_0}\int_{G(t)}^{G(t)+\delta}\hat{U}_i(t,x){\rm d}x\dt>0 ~ ~ {\rm as } ~ \mu_i\to\yy,\]
which clearly implies that $G(t_0)=-\yy$. So $G(t)=-\yy$ for all $t>0$, and our claim is verified. Combining with the monotonicity of $g_{\mu_i}(t)$ and $h_{\mu_i}(t)$ in $t$, one easily shows that $\lim_{\mu_i\to\yy}-g_{\mu_i}(t)=\lim_{\mu_i\to\yy}h_{\mu_i}(t)\to\yy$ locally uniformly in $(0,\yy)$.

Next we prove that $\hat{U}$ satisfies \eqref{1.2}. For any $(t,x)\in(0,\yy)\times\mathbb{R}$, there are large $\hat\mu_i>0$ and $t_1<t$ such that $x\in(g_{\mu_i}(s),h_{\mu_i}(s))$ for all $\mu_i\ge\hat\mu_i$ and $s\in[t_1,t]$. Integrating the first $m$ equations in \eqref{1.1} over $t_1$ to $s\in(t_1,t]$ yields that for $j\in\{1,2,\cdots,m_0\}$,
\[u^j_{\mu_i}(s,x)-u^j_{\mu_i}(t_1,x)=\int_{t_1}^{s}\bigg(d_j\mathcal{L}[u^j_{\mu_i}](\tau,x)+f_j(u^1_{\mu_i}(\tau,x),u^2_{\mu_i}(\tau,x),\cdots,u^m_{\mu_i}(\tau,x))\bigg){\rm d}\tau,\]
and
\[u^j_{\mu_i}(s,x)-u^j_{\mu_i}(t_1,x)=\int_{t_1}^{s}f_j(u^1_{\mu_i}(\tau,x),u^2_{\mu_i}(\tau,x),\cdots,u^m_{\mu_i}(\tau,x)){\rm d}\tau ~ ~ {\rm for } ~ j\in\{m_0+1,\cdots,m\}.\]
Letting $\mu_i\to\yy$ and using the dominated convergence theorem, we have that for $j\in\{1,2,\cdots,m_0\}$,
\[\hat{U}_j(s,x)-\hat{U}_j(t_1,x)=\int_{t_1}^{s}\bigg(d_j\mathcal{L}[\hat{U}_j](\tau,x)+f_j(\hat{U}_1(\tau,x),\hat{U}_2(\tau,x),\cdots,\hat{U}_m(\tau,x))\bigg){\rm d}\tau,\]
and
\[\hat{U}_j(s,x)-\hat{U}_j(t_1,x)=\int_{t_1}^{s}f_j(\hat{U}_1(\tau,x),\hat{U}_2(\tau,x),\cdots,\hat{U}_m(\tau,x)){\rm d}\tau ~ ~ {\rm for } ~ j\in\{m_0+1,\cdots,m\}.\]
Then differentiating the above equations by $s$, we see that $\hat{U}$ solves \eqref{1.2} for any $(t,x)\in(0,\yy)\times\mathbb{R}$. Moreover,
since $0\le\hat{U}(t,x)\le U(t,x)$ in $(0,\yy)\times\mathbb{R}$, it is easy to see that $\lim_{t\to0}\hat{U}(t,x)={\bf 0}$. By the uniqueness of solution to \eqref{1.2}, $\hat{U}(t,x)\equiv U(t,x)$ in $[0,\yy)\times\mathbb{R}$.
Using Dini's theorem, we directly derive our desired results.
\end{proof}

\section{Examples}
In this section, we introduce two epidemic models to explain our conclusions.

{\bf Example 1.} To investigate the spreading of some infectious diseases, such as cholera, Capasso and Maddalena \cite{CM} studied the following problem
\bes\label{4.1}
\left\{\begin{aligned}
&\partial_tu_1=d_1\Delta u_1-au_1+cu_2, & &t>0,~x\in\Omega,\\
&\partial_tu_2=d_2\Delta u_2-bu_2+G(u_1), & &t>0,~x\in\Omega,\\
&\frac{\partial u_1}{\partial \nu}+\lambda_1u_1=0, ~ ~ \frac{\partial u_2}{\partial \nu}+\lambda_2u_2=0, & & t>0,~ x\in\partial \Omega,
\end{aligned}\right.
 \ees
 where $u_1$ and $u_2$ represent the concentration of the infective agents, such as bacteria, and the infective human population, respectively. Both of them adopt the random diffusion (or called by local diffusion) strategy. And  $d_1$ and $d_2$ are their
respective diffusion rates. $-au_1$ is the death rate of the infection agents, $cu_2$ is the
growth rate of the agents contributed by the infective humans, and $-bu_2$ is the death rate of the infective human population. The function $G(u_1)$ describes the infective rate of humans, and its assumptions will be given later.

Recently, much research for model \eqref{4.1} and its variations has been done. For example, one can refer to \cite{ABL,LZW} for the free boundary problem with local diffusion, and \cite{ZLN} for the spreading speed. Particularly, Zhao et al \cite{ZZLD} recently replaced the local diffusion term of $u_1$ with the nonlocal diffusion operator as in \eqref{1.1}, and assume $d_2=0$. They found that the dynamics of their model is different that of \cite{ABL}.

Here we assume that the dispersal of the infective human population is also approximated by the nonlocal diffusion as in \eqref{1.1}, and thus propose the following model
\bes\left\{\begin{aligned}\label{4.2}
&\partial_tu_1=d_1\int_{g(t)}^{h(t)}J_1(x-y)u_1(t,y)\dy-d_1u_1-au_1+cu_2, & &t>0,~x\in(g(t),h(t)),\\
&\partial_tu_2=d_2\int_{g(t)}^{h(t)}J_2(x-y)u_2(t,y)\dy-d_2u_2-bu_2+G(u_1), & &t>0,~x\in(g(t),h(t)),\\
&u_i(t,g(t))=u_i(t,h(t))=0,& &t>0, ~ i=1,2,\\
&g'(t)=-\sum_{i=1}^{2}\mu_i\int_{g(t)}^{h(t)}\int_{-\yy}^{g(t)}
J_i(x-y)u_i(t,x)\dy\dx,& &t>0,\\
&h'(t)=\sum_{i=1}^{2}\mu_i\int_{g(t)}^{h(t)}\int_{h(t)}^{\infty}
J_i(x-y)u_i(t,x)\dy\dx,& &t>0,\\
&-g(0)=h(0)=h_0>0;\;\;u_1(0,x)=u_{10}(x),\;\;u_2(0,x)=u_{20}(x),&&x\in[-h_0,h_0],
 \end{aligned}\right.
 \ees
 where $J_i$ satisfy {\bf(J)}, $d_i,a,b,c$ are positive constant, $\mu_i\ge0$ and $\sum_{i=1}^{2}\mu_i>0$. $G(z)$ satisfies

(i) $G\in C^1([0,\yy))$, $G(0)=0$, $G'(z)>0$ for $z\ge0$ and $G'(0)>\frac{ab}{c}$;

(ii) $\bigg(\frac{G(z)}{z}\bigg)'<0$ for $z>0$ and $\lim_{t\to\yy}\frac{G(z)}{z}<\frac{ab}{c}$; Assumptions (i) and (ii) clearly imply that there is a unique positive constant $u^*_1$ such that $\frac{G(u^*_1)}{u^*_1}=\frac{ab}{c}$. Define $u^*_2=\frac{G(u^*_1)}{b}$.

(iii) $\bigg(\frac{G(u^*_1)}{u^*_1}\bigg)'<-\frac{ab}{c}$;\\
An example for $G$ is $G(z)=\frac{\alpha z}{1+\beta z}$ with $\alpha>\frac{ab}{c}$ and $\beta>\frac{\alpha c}{ab}$. Obviously, we have
\[F(u_1,u_2)=(f_1(u_1,u_2),f_2(u_1,u_2))=(-au_1+cu_2, -bu_2+G(u_1)) ~ ~{\rm and ~ ~ }{\bf u^*}=(u^*_1,u^*_2).\]
 By the similar methods in \cite{DNwn}, we easily show the dynamics of \eqref{4.2} are governed by the following spreading-vanishing dichotomy

\sk{\rm(i)}\, \underline{Spreading:} $\lim_{t\to\yy}-g(t)=\lim_{t\to\yy}h(t)=\yy$ (necessarily $\mathcal{R}_0=\frac{G'(0)c}{ab}>1$) and
\[\lim_{t\to\yy}(u_1(t,x),u_2(t,x))=(u^*_1,u^*_2) ~ ~ {\rm locally ~ uniformly ~ in~} \mathbb{R}.\]

\sk{\rm(ii)}\, \underline{Vanishing:}  $\lim_{t\to\yy}\big(h(t)-g(t)\big)<\yy$ and \[\lim_{t\to\yy}\bigg(\|u_1(t,\cdot)\|_{C([g(t),h(t)])}+\|u_2(t,\cdot)\|_{C([g(t),h(t)])}\bigg)=0.\]

It is easy to show that conditions {\bf(f1)}-{\bf(f5)} hold for $F$. Hence Theorem \ref{t1.1} is valid for \eqref{4.2}.

\begin{theorem}\label{t4.1}Let $(u_1,u_2,g,h)$ be a solution of \eqref{4.2} and spreading happen. Then
\bess\left\{\begin{aligned}
&\lim_{t\to\yy}\max_{x\in[0,\,ct]}\sum_{i=1}^{2}|u_i(t,x)-u^*_i|=0 ~ {\rm for ~ any ~ } c\in(0,c_0) ~ ~ {\rm if ~ {\bf(J1)} ~ holds ~ with ~ }m_0=m=2,\\
&\lim_{t\to\yy}\max_{x\in[0,\,ct]}\sum_{i=1}^{2}|u_i(t,x)-u^*_i|=0 ~ {\rm for ~ any ~ } c>0 ~ ~ {\rm if ~ {\bf(J1)} ~ does ~ not ~ hold},
\end{aligned}\right.\eess
where $c_0$ is uniquely determined by the semi-wave problem \eqref{1.3}-\eqref{1.4} with $m_0=m=2$.
\end{theorem}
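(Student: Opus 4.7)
The plan is to recognize that Theorem~\ref{t4.1} is a direct specialization of Theorem~\ref{t1.1} to problem~\eqref{4.2}, so the task reduces to checking that the reaction term
\[
F(u_1,u_2) = \bigl(-au_1+cu_2,\ -bu_2+G(u_1)\bigr)
\]
satisfies the standing hypotheses \textbf{(f1)}--\textbf{(f5)} with $m_0 = m = 2$ and ${\bf \hat u}=\infty$. Once this verification is complete, the uniform convergence on $|x|\le ct$ supplied by Theorem~\ref{t1.1} trivially implies the one claimed on $[0,ct]$, and nothing further is required.

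First one runs through \textbf{(f1)} and \textbf{(f2)}. The equation $F={\bf 0}$ reduces to $G(z)/z = ab/c$, which by assumptions~(i)--(ii) on $G$ has exactly the two roots ${\bf 0}$ and ${\bf u^*}=(u^*_1,au^*_1/c)$, giving \textbf{(f1)(i)}. Cooperativity \textbf{(f1)(ii)} is immediate from $\partial_2 f_1 = c>0$ and $\partial_1 f_2 = G'(u_1)>0$, while (iv) is vacuous because $m_0=m$. The Jacobian $\nabla F({\bf 0})$ is an irreducible Metzler matrix whose principal eigenvalue is positive iff $ab-cG'(0)<0$, i.e.\ iff $G'(0)>ab/c$, which is exactly condition~(i) on $G$; this gives \textbf{(f1)(iii)}. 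For \textbf{(f2)}, $f_1$ is linear and so yields equality, while $(G(z)/z)'<0$ gives $G(ku_1)\ge kG(u_1)$ for $k\in[0,1]$, hence $F(ku)\succeq kF(u)$.

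Next one checks \textbf{(f3)}. The $i=1$ sum $\sum_j \partial_j f_1({\bf u^*})u^*_j = -au^*_1+cu^*_2$ vanishes by the equilibrium relation, and $f_1$ is globally linear, so case~(ii) of \textbf{(f3)} applies for $i=1$; the $i=2$ sum equals $G'(u^*_1)u^*_1 - bu^*_2$, which is strictly negative by assumption~(ii) applied at $u^*_1$, so case~(i) applies for $i=2$, and assumption~(iii) then sharpens this quantitatively enough to guarantee the invertibility of $\nabla F({\bf u^*})$. Conditions \textbf{(f4)} (attraction to ${\bf u^*}$ for the kinetic ODE) and \textbf{(f5)} (attraction to ${\bf u^*}$ for the nonlocal Cauchy problem on $\mathbb{R}$) are the standard conclusions for cooperative two-component epidemic models with a unique interior equilibrium: they are obtained by monotone iteration between ${\bf 0}$ and a large constant supersolution, squeezed against the ODE flow, following the template in \cite{DNwn}. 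With \textbf{(f1)}--\textbf{(f5)} in place for \eqref{4.2}, Theorem~\ref{t1.1} applies verbatim and delivers both claimed limits.

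The only step requiring genuine care is the verification of \textbf{(f5)}: squeezing every nontrivial solution of \eqref{1.2} between the ODE envelope and a small compactly supported lower solution built from the principal eigenfunction of $\nabla F({\bf 0})$ depends on the strong positivity lemma for cooperative nonlocal systems invoked in \cite{DN21,DNwn}. Once that is in hand, everything else is routine bookkeeping, and the theorem follows.
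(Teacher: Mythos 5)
Your proposal matches the paper's own treatment exactly: the paper simply states that ``It is easy to show that conditions \textbf{(f1)}--\textbf{(f5)} hold for $F$'' and then invokes Theorem~\ref{t1.1}, and your argument spells out precisely that verification (with the same conclusions for each hypothesis) followed by the same citation. One minor correction worth noting: the invertibility of $\nabla F(\mathbf{u}^*)$ already follows from assumption~(ii) on $G$, since $(G(z)/z)'<0$ gives $G'(u^*_1)<G(u^*_1)/u^*_1=ab/c$ and hence $\det\nabla F(\mathbf{u}^*)=ab-cG'(u^*_1)>0$, so assumption~(iii) is not needed for that particular point.
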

However, one easily checks that $F$ does not satisfy {\bf(f6)}. Thus Theorem \ref{t1.2} can not directly be applied to \eqref{4.2}. But by using some new lower solution we still can prove that the similar results in Theorem \ref{t1.2} hold for problem \eqref{4.2}.

\begin{theorem}\label{t2.5} Assume that $J_i$ satisfy ${\bf (J^{\gamma})}$ with $\gamma\in(1,2]$ and $m_0=m=2$. Let spreading happen for \eqref{4.2}. Then
\bess\left\{\begin{aligned}
&\lim_{t\to\yy}\max_{|x|\le s(t)}\sum_{i=1}^{2}|u_i(t,x)-u^*_i|=0  ~ {\rm for ~ any } ~ 0\le s(t)=t^{\frac{1}{\gamma-1}}o(1) ~ ~ {\rm  ~ if ~}\gamma\in(1,2),\\
&\lim_{t\to\yy}\max_{|x|\le s(t)}\sum_{i=1}^{2}|u_i(t,x)-u^*_i|=0  ~ {\rm for ~ any } ~ 0\le s(t)=(t\ln t) o(1)  ~ ~ {\rm  ~ if ~ }\gamma=2.
\end{aligned}\right.\eess
\end{theorem}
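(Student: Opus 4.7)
The key difficulty is that assumption (f6) fails for this epidemic model: since $cu^*_2 = au^*_1$, we have $f_1((1-\ep){\bf u^*}) = (1-\ep)(-au^*_1 + cu^*_2) = 0$, so the pointwise inequality $F((1-\ep){\bf u^*}) \succeq \ep {\bf C}$ with ${\bf C} \succ {\bf 0}$ --- which underlies the crucial bound $F(\underline u) \succeq \overline C \ep \underline u$ in the proof of Theorem \ref{t1.2} --- is not available. The plan is therefore to redo that proof using a slightly different target vector for the lower solution.

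My first step is to replace the scalar-multiple target $(1-\ep){\bf u^*}$ by a vector $v_\ep := ((1-\ep)u^*_1,\,(1-K\ep)u^*_2)$ with a carefully chosen constant $K \in (0,1)$, so that $F(v_\ep) \succeq C\ep\,{\bf 1}$ for some $C > 0$ and all small $\ep$. A direct calculation, using $cu^*_2 = au^*_1$ and $G(u^*_1) = bu^*_2$, gives $f_1(v_\ep) = \ep au^*_1(1-K)$ and $f_2(v_\ep) = \ep(Kbu^*_2 - u^*_1 G'(u^*_1)) + O(\ep^2)$. Assumption (ii) on $G$, namely $(G(z)/z)' < 0$, yields $G'(u^*_1) < G(u^*_1)/u^*_1 = bu^*_2/u^*_1$, so the interval $\bigl(u^*_1 G'(u^*_1)/(bu^*_2),\,1\bigr)$ is nonempty; fixing any $K$ in it makes both components of $F(v_\ep)$ strictly positive of order $\ep$.

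Next I would mimic the two-step construction from the proof of Theorem \ref{t1.2}, with $v_\ep$ replacing $(1-\ep){\bf u^*}$: take $\underline h(t) = (Kt+\theta)^{1/(\gamma-1)}$ and $\underline u(t,x) = v_\ep\circ(1 - |x|/\underline h(t))$ when $\gamma \in (1,2)$, and $\underline h(t) = K(t+\theta)\ln(t+\theta)$ together with $\underline u(t,x) = v_\ep\circ \min\{1,\,(\underline h(t) - |x|)/(t+\theta)^{1/2}\}$ when $\gamma = 2$. Writing $\varphi(t,x)$ for the tent or trapezoidal profile, the required reaction-term bound reduces to two elementary observations: (a) $f_1$ is linear in $(u_1,u_2)$, so $f_1(v_\ep\circ\varphi) = \varphi\, f_1(v_\ep)$; (b) concavity of $G$ gives $G(v_{\ep,1}\varphi) \ge \varphi G(v_{\ep,1})$, whence $f_2(v_\ep\circ\varphi) \ge \varphi\, f_2(v_\ep)$. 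Combining, $F(\underline u) \succeq \varphi F(v_\ep) \succeq C\ep \varphi\, {\bf 1} \succeq \overline C \ep \underline u$ for some $\overline C > 0$, which is precisely the substitute for the failed (f6)-based bound in Theorem \ref{t1.2}.

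The remaining ingredients of the lower-solution system --- the heavy-tail estimates on $\int J_i(x-y)\underline u_i(t,y)\,\dy$ coming from $({\bf J^\gamma})$, the bounds on $\underline h'(t)$, and the initial comparison (available because spreading occurs for \eqref{4.2}) --- are componentwise and carry over verbatim with $v_\ep$ in place of $(1-\ep){\bf u^*}$, since $v_\ep \to {\bf u^*}$ and each component of $v_\ep$ stays of order $u^*_i$ as $\ep \to 0$. Applying the comparison principles of \cite{DN21} gives $u(t+T,x) \succeq \underline u(t,x)$, and hence $\liminf_{t\to\yy} u(t,x) \succeq v_\ep$ uniformly on the required spatial range; sending $\ep \to 0$ yields $\liminf u \succeq {\bf u^*}$. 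The matching upper bound $\limsup u \preceq {\bf u^*}$ is routine via the ODE comparison $\bar u_t = F(\bar u)$, exactly as in the proof of Theorem \ref{t1.1}. The main obstacle is identifying the correct perturbation direction for $v_\ep$; once the admissible range of $K$ is pinned down by the concavity of $G$, the rest is a direct adaptation of the scheme of Theorem \ref{t1.2}.
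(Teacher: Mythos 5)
Your proposal is correct and follows the same overall strategy as the paper: both proceed by replacing the scalar-multiple target $(1-\ep){\bf u^*}$ (which fails here since $f_1((1-\ep){\bf u^*})=0$) with an asymmetric vector that is reduced strictly more in its first component, then rerun the tent/trapezoid lower-solution construction of Theorem \ref{t1.2}. The parameterization differs: the paper takes $v_\ep=\bigl((1-\ep^{\alpha_1})u^*_1,\ (1-\ep^{\alpha_2})u^*_2\bigr)$ with $0<\frac{\alpha_2}{2}<\alpha_1<\alpha_2<1$, whereas you take $v_\ep=\bigl((1-\ep)u^*_1,\ (1-K\ep)u^*_2\bigr)$ with a fixed $K\in\bigl(u^*_1G'(u^*_1)/(bu^*_2),1\bigr)$. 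Both produce the needed bound $F(\underline u)\succeq\overline C\ep\,\underline u$; the remaining pieces (the heavy-tail estimate via $({\bf J^\gamma})$, the free-boundary inequalities, the initial comparison from spreading) carry over componentwise in either case, exactly as you say.

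Two remarks on the details. First, you justify $G(v_{\ep,1}\varphi)\ge\varphi G(v_{\ep,1})$ by ``concavity of $G$,'' but concavity is not among the standing assumptions on $G$ for model \eqref{4.2}; what is assumed is (ii), namely $(G(z)/z)'<0$, which together with $G(0)=0$ gives precisely $G(\varphi z)\ge\varphi G(z)$ for $\varphi\in[0,1]$. (The model example $G(z)=\alpha z/(1+\beta z)$ happens to be concave, which may have been the source of the slip, but the general argument should appeal to (ii).) Second, your verification that the interval $\bigl(u^*_1G'(u^*_1)/(bu^*_2),1\bigr)$ is nonempty uses only (ii) at $z=u^*_1$, i.e.\ $G'(u^*_1)u^*_1<G(u^*_1)=bu^*_2$, so your route does not invoke the transversality condition (iii). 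By contrast, the paper's proof of inequality \eqref{4.7} (monotonicity of $\Gamma$) explicitly relies on (iii). Your version of the lemma is therefore slightly cleaner: the strict sign from (ii) alone, plus a $C^1$ Taylor expansion around $u^*_1$, is enough, and there is no need to tune two power exponents.
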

\begin{proof}{\bf Step 1:} Consider problem
\bess\left\{\begin{aligned}
&\bar{u}_{1t}=-a\bar{u}_1+c\bar{u}_2\\
&\bar{u}_{2t}=-b\bar{u}_2+G(u_1)\\
&\bar{u}_1(0)=\|u_{10}(x)\|_{C([-h_0,h_0])}, ~ \bar{u}_2(0)=\|u_{20}(x)\|_{C([-h_0,h_0])}.
\end{aligned}\right.\eess
It follows from simple phase-plane analysis that $\lim_{t\to\yy}\bar{u}_1(t)=u^*_1$ and $\lim_{t\to\yy}\bar{u}_2(t)=u^*_2$. By a comparison argument, we have $u_1(t,x)\le \bar{u}_1(t)$ and $u_2(t,x)\le \bar{u}_2(t)$ for $t\ge0$ and $x\in\mathbb{R}$. Thus
\[\limsup_{t\to\yy}u_1(t,x)\le u^*_1 ~ {\rm and } ~ \limsup_{t\to\yy}u_2(t,x)\le u^*_2 ~ ~ {\rm uniformly ~ in ~ }\mathbb{R}.\]
Thus it remains to show the lower limitations of $u$. We will carry it out in two steps.

{\bf Step 2:} This step concerns the case $\gamma\in(1,2)$. We will construct a suitably lower solution, which is different from that of the Step 2 in proof of Theorem \ref{t1.2}, to show that for any $0\le s(t)=t^{\frac{1}{\gamma-1}}o(1)$,
\[\liminf_{t\to\yy}u_1(t,x)\ge u^*_1 ~ {\rm and } ~ \liminf_{t\to\yy}u_2(t,x)\ge u^*_2 ~ ~ {\rm uniformly ~ in ~ }|x|\le s(t).\]

For small $\ep>0$ and $0<\frac{\alpha_2}{2}<\alpha_1<\alpha_2<1$, define
\bess
&&\underline{h}(t)=(Kt+\theta)^{\frac{1}{\gamma-1}} ~ {\rm and}\\ &&\underline{u}(t,x)=(\underline{u}_1(t,x),\underline{u}_2(t,x))=\bigg(u^*_1(1-\ep^{\alpha_1})(1-\frac{|x|}{\underline{h}(t)}),
u^*_2(1-\ep^{\alpha_2})(1-\frac{|x|}{\underline{h}(t)})\bigg)
\eess
with $t\ge0$, $x\in[-\underline{h}(t),\underline{h}(t)]$, and $K,\theta>0$ to be determined later.

We next prove that for small $\ep>0$, there exist proper $T,K$ and $\theta>0$ such that
\bes\left\{\begin{aligned}\label{4.3}
&\partial_t\underline u_1\le d_1\int_{-\underline{h}(t)}^{\underline{h}(t)}J_1(x-y)\underline u_1(t,y)\dy-d_1\underline u_1-a\underline u_1+c\underline u_2, &&\hspace{-15mm}t>0,~x\in(-\underline h(t),\underline h(t)),\\
&\partial_t\underline u_2\le d_2\int_{-\underline{h}(t)}^{\underline{h}(t)}J_2(x-y)\underline u_2(t,y)\dy-d_2\underline u_2-b\underline u_2+G(\underline u_1), & &\hspace{-15mm}t>0,~x\in(-\underline h(t),\underline h(t)),\\
&\underline u_i(t,\pm\underline h(t))\le0,& &\hspace{-15mm}t>0, ~ i=1,2,\\
&-\underline{h}(t)\ge-\sum_{i=1}^{2}\mu_i\int_{-\underline{h}(t)}^{h(t)}\int_{-\yy}^{-h(t)}
J_i(x-y)\underline u_i(t,x)\dy\dx,& &\hspace{-15mm}t>0,\\
&\underline h'(t)\le\sum_{i=1}^{2}\mu_i\int_{-\underline h(t)}^{h(t)}\int_{\underline h(t)}^{\infty}
J_i(x-y)\underline u_i(t,x)\dy\dx,& &\hspace{-15mm}t>0,\\
&-\underline{h}(0)\ge g(T),~ \underline h(0)\le h(T);\;\;\underline u_1(0,x)\le u_1(T,x),\;\;\underline u_2(0,x)\le u_{2}(T,x),&&x\in[-\underline{h}(0),\underline{h}(0)].
 \end{aligned}\right.
 \ees
 As before, if \eqref{4.3} holds, then by comparison principle and our definition of the lower solution $(\underline{u},-\underline{h},\underline{h})$, we easily derive
 \[\liminf_{t\to\yy}(u_1(t,x),u_2(t,x))\succeq(u^*_1(1-\ep^{\alpha_1}),u^*_2(1-\ep^{\alpha_2})) ~ {\rm uniformly ~ in ~} |x|\le s(t),\]
 which, combined with the arbitrariness of $\ep$, yields
 \[\liminf_{t\to\yy}(u_1(t,x),u_2(t,x))\succeq(u^*_1,u^*_2) ~ {\rm uniformly ~ in ~} |x|\le s(t).\]

Clearly, $\underline{u}_i(t,\pm\underline{h}(t))=0$ for $t\ge0$. Then we prove that the fourth and fifth inequalities of \eqref{4.3} hold. Similarly to the proof of Theorem \ref{t1.2}, for large $\theta>0$ and small $K>0$, we have
\bess
&&\sum_{i=1}^{2}\mu_i\dd\int_{-\underline h(t)}^{\underline h(t)}\int_{\underline h(t)}^{\infty}
J_i(x-y)\underline u_i(t,x)\dy\dx\\
&&=\sum_{i=1}^{2}(1-\ep^{\alpha_i})\mu_iu^*_i\dd\int_{-\underline h(t)}^{\underline h(t)}\int_{\underline h(t)}^{\infty}
J_i(x-y)(1-\frac{|x|}{\underline{h}(t)})\dy\dx\\
&&\ge\sum_{i=1}^{2}\frac{(1-\ep^{\alpha_i})\mu_iu^*_i}{\underline{h}(t)}\bigg(\dd\int_{0}^{\underline h(t)}\int_{0}^{y}+\int_{\underline{h}(t)}^{\yy}\int_{0}^{\underline{h}(t)}\bigg)J_i(y)x\dx\dy\\
&&\ge\sum_{i=1}^{2}\frac{(1-\ep^{\alpha_i})\mu_iu^*_i}{2\underline{h}(t)}\int_{0}^{\underline h(t)}J_i(y)y^2\dy\ge\sum_{i=1}^{2}\frac{C_1(1-\ep^{\alpha_i})\mu_iu^*_i}{2\underline{h}(t)}\int_{\underline h(t)/2}^{\underline h(t)}y^{2-\gamma}\dy\\
&&\ge\tilde{C}_1(Kt+\theta)^{(2-\gamma)/(\gamma-1)}\ge\frac{K(Kt+\theta)^{(2-\gamma)/(\gamma-1)}}{\gamma-1}=\underline{h}'(t)
.
\eess
Thus the fourth inequality in \eqref{4.3} holds, and the fifth obviously follows from the symmetry of $J$ and $\underline{u}$ about $x$. Now we deal with the first two inequalities in \eqref{4.3}. As in the Step 2 of the proof of Theorem \ref{t1.2}, we can show that there exist $\hat{C}>0$ such that
\bes\label{4.4}\int_{-\underline h(t)}^{\underline h(t)}J_i(x-y)\underline u_i(t,y)\dy\ge\hat{C}u^*_i(1-\ep^{\alpha_i})\underline h^{1-\gamma}(t) ~ ~ {\rm for } ~ t>0, ~ x\in[-\underline h(t),\underline h(t)].
\ees
So the details are omitted here. Then we claim that for small $\ep>0$,
\bes\label{4.5}-a\underline{u}_1(t,x)+c\underline{u}_2(t,x)\ge\ep\underline{u}_1(t,x) ~ ~ {\rm for ~ }t\ge0, ~ x\in[-\underline h(t),\underline h(t)].\ees
Clearly, it suffices to show that if $\ep$ is small enough, then
\[-au^*_1(1-\ep^{\alpha_1})+cu^*_2(1-\ep^{\alpha_2})\ge\ep u^*_1(1-\ep^{\alpha_1}).\]
Direct computations show
\bess
&&-au^*_1(1-\ep^{\alpha_1})+cu^*_2(1-\ep^{\alpha_2})-\ep u^*_1(1-\ep^{\alpha_1})\\
&&=-au^*_1+au^*_1\ep^{\alpha_1}+cu^*_2-cu^*_2\ep^{\alpha_2}-\ep u^*_1(1-\ep^{\alpha_1})\\
&&=au^*_1\ep^{\alpha_1}-cu^*_2\ep^{\alpha_2}-\ep u^*_1(1-\ep^{\alpha_1})\\
&&=\ep^{\alpha_1}\left[au^*_1-cu^*_2\ep^{\alpha_2-\alpha_1}-\ep^{1-\alpha_1} u^*_1(1-\ep^{\alpha_1})\right]>0 ~ ~ {\rm if ~ } \ep ~ {\rm is ~ small ~ sufficiently}.
\eess
Furthermore, we claim that for small $\ep>0$,
\bes\label{4.6}-b\underline{u}_2(t,x)+G(\underline{u}_1(t,x))\ge\ep\underline{u}_2(t,x)~ ~ {\rm for ~ }t\ge0, ~ x\in[-\underline h(t),\underline h(t)].
\ees
We first prove that for small $\ep>0$,
\bes\label{4.7}
\frac{G(u^*_1(1-\ep^{\alpha_1})(1-\frac{|x|}{\underline{h}(t)}))}
{u^*_1(1-\ep^{\alpha_1})(1-\frac{|x|}{\underline{h}(t)})}\ge\frac{ab}{c}(1+\ep^{\alpha_1})~ ~ {\rm for ~ }t\ge0, ~ x\in(-\underline h(t),\underline h(t)).
\ees
By the assumptions on $G$, we see
\[\frac{G(u^*_1(1-\ep^{\alpha_1})(1-\frac{|x|}{\underline{h}(t)}))}
{u^*_1(1-\ep^{\alpha_1})(1-\frac{|x|}{\underline{h}(t)})}\ge\frac{G(u^*_1(1-\ep^{\alpha_1}))}
{u^*_1(1-\ep^{\alpha_1})}.\]
Thus we only need to prove that for $t\ge0$ and $x\in(-\underline h(t),\underline h(t))$,
\[\frac{G(u^*_1(1-\ep^{\alpha_1}))}
{u^*_1(1-\ep^{\alpha_1})}\ge\frac{ab}{c}(1+\ep^{\alpha_1}).\]
Define
\[\Gamma(\ep)=\frac{G(u^*_1(1-\ep^{\alpha_1}))}
{u^*_1(1-\ep^{\alpha_1})}-\frac{ab}{c}(1+\ep^{\alpha_1}) ~ ~ {\rm for ~ }0<\ep\ll1.\]
Obviously, $\Gamma(0)=0$, and by our assumptions on $G$ we obtain that for $0<\ep\ll1$,
\bess
&&\Gamma'(\ep)=-\bigg(\frac{G(u^*_1(1-\ep^{\alpha_1}))}
{u^*_1(1-\ep^{\alpha_1})}\bigg)'\alpha_1\ep^{\alpha_1-1}-\frac{ab}{c}\alpha_1\ep^{\alpha_1-1}\\
&&=\alpha_1\ep^{\alpha_1-1}\left[-\bigg(\frac{G(u^*_1(1-\ep^{\alpha_1}))}
{u^*_1(1-\ep^{\alpha_1})}\bigg)'-\frac{ab}{c}\right]>0.
\eess
So \eqref{4.7} holds. Now we continue to prove \eqref{4.6}. Obviously, it holds for $x=\pm\underline{h}(t)$. For $x\in(-\underline h(t),\underline h(t))$, we have
\bess
&&-b\underline{u}_2(t,x)+G(\underline{u}_1(t,x))-\ep\underline{u}_2(t,x)\\
&&=(1-\frac{|x|}{\underline{h}(t)})\left[-bu^*_2(1-\ep^{\alpha_2})+u^*_1(1-\ep^{\alpha_1})
\frac{G(u^*_1(1-\ep^{\alpha_1})(1-\frac{|x|}{\underline{h}(t)}))}
{u^*_1(1-\ep^{\alpha_1})(1-\frac{|x|}{\underline{h}(t)})}-\ep u^*_2(1-\ep^{\alpha_2})\right]\\
&&\ge(1-\frac{|x|}{\underline{h}(t)})\left[-bu^*_2(1-\ep^{\alpha_2})+u^*_1(1-\ep^{\alpha_1})
\frac{ab}{c}(1+\ep^{\alpha_1})-\ep u^*_2(1-\ep^{\alpha_2})\right]\\
&&=(1-\frac{|x|}{\underline{h}(t)})\left[bu^*_2\ep^{\alpha_2}-
\frac{abu^*_1}{c}\ep^{2\alpha_1}-\ep u^*_2(1-\ep^{\alpha_2})\right]\\
&&=(1-\frac{|x|}{\underline{h}(t)})\ep^{\alpha_2}\left[bu^*_2-
\frac{abu^*_1}{c}\ep^{2\alpha_1-\alpha_2}-\ep^{1-\alpha_2} u^*_2(1-\ep^{\alpha_2})\right]>0
\eess
provided that $\ep$ is small suitably. With \eqref{4.4}, \eqref{4.5} and \eqref{4.6} in hand, we similarly can obtain that for small $K>0$,
\bess
&&d_1\int_{-\underline{h}(t)}^{\underline{h}(t)}J_1(x-y)\underline u_1(t,y)\dy-d_1\underline u_1-a\underline u_1+c\underline u_2\\
&&\ge\frac{\ep}{2}\int_{-\underline{h}(t)}^{\underline{h}(t)}J_1(x-y)\underline u_1(t,y)\dy\ge\hat{C}u^*_1(1-\ep^{\alpha_1})\underline h^{1-\gamma}(t)\ge\frac{u^*_1(1-\ep^{\alpha_1})K\underline{h}^{1-\gamma}}{\gamma-1}\ge\underline{u}_{1t},
\eess
and
\bess
&&d_2\int_{-\underline{h}(t)}^{\underline{h}(t)}J_2(x-y)\underline u_2(t,y)\dy-d_2\underline u_2-b\underline u_2+G(\underline u_1)\\
&&\ge\frac{\ep}{2}\int_{-\underline{h}(t)}^{\underline{h}(t)}J_2(x-y)\underline u_2(t,y)\dy
\ge\hat{C}u^*_2(1-\ep^{\alpha_2})\underline h^{1-\gamma}(t)\ge\frac{u^*_2(1-\ep^{\alpha_2})K\underline{h}^{1-\gamma}}{\gamma-1}\ge\underline{u}_{2t}.
\eess
Therefore the first two inequalities in \eqref{4.3} hold.

Since spreading happens, for such $K,\theta$ and $\ep$ as chosen above, there is a $T>0$ such that $-\underline{h}(0)\ge g(T)$, $\underline{h}(0)\le h(T)$ and $\underline{u}(0,x)\preceq (u^*_1(1-\ep^{\alpha_1}),u^*_2(1-\ep^{\alpha_2}))\preceq(u_1(T,x),u_2(T,x))$ in $[-\underline{h}(0),\underline{h}(0)]$. Hence \eqref{4.3} hold, and we finish this step.

{\bf Step 3:} We now prove that for any $0\le s(t)=t\ln t o(1)$,
\[\liminf_{t\to\yy}u_1(t,x)\ge u^*_1 ~ {\rm and } ~ \liminf_{t\to\yy}u_2(t,x)\ge u^*_2 ~ ~ {\rm uniformly ~ in ~ }|x|\le s(t).\]
For fixed $0<\frac{\alpha_2}{2}<\alpha_1<\alpha_2<1$ and small $\ep>0$, define
\bess
&&\underline{h}(t)=K(t+\theta)\ln(t+\theta) ~ {\rm and}\\ &&\underline{u}(t,x)=(\underline{u}_1(t,x),\underline{u}_2(t,x))
=\bigg(u^*_1(1-\ep^{\alpha_1})\min\{1,\frac{\underline{h}(t)-|x|}{(t+\theta)^{1/2}}\},
u^*_2(1-\ep^{\alpha_2})\min\{1,\frac{\underline{h}(t)-|x|}{(t+\theta)^{1/2}}\}\bigg)
\eess
with $t\ge0$, $x\in[-\underline{h}(t),\underline{h}(t)]$, and $K,\theta>0$ to be determined later.

We next prove that for small $\ep>0$, there exist proper $T,K$ and $\theta>0$ such that
\bes\left\{\begin{aligned}\label{4.8}
&\partial_t\underline u_1\le d_1\int_{-\underline{h}(t)}^{\underline{h}(t)}J_1(x-y)\underline u_1(t,y)\dy-d_1\underline u_1-a\underline u_1+c\underline u_2,\\
 &\hspace{25mm}t>0,~x\in(-\underline h(t),\underline h(t))\setminus\{\underline{h}(t)-(t+\theta)^{1/2}\},\\
&\partial_t\underline u_2\le d_2\int_{-\underline{h}(t)}^{\underline{h}(t)}J_2(x-y)\underline u_2(t,y)\dy-d_2\underline u_2-b\underline u_2+G(\underline u_1),\\
&\hspace{25mm}t>0,~x\in(-\underline h(t),\underline h(t))\setminus\{\underline{h}(t)-(t+\theta)^{1/2}\},\\
&\underline u_i(t,\pm\underline h(t))\le0,& &\hspace{-25mm}t>0, ~ i=1,2,\\
&-\underline{h}(t)\ge-\sum_{i=1}^{2}\mu_i\int_{-\underline{h}(t)}^{h(t)}\int_{-\yy}^{-h(t)}
J_i(x-y)\underline u_i(t,x)\dy\dx,& &\hspace{-25mm}t>0,\\
&\underline h'(t)\le\sum_{i=1}^{2}\mu_i\int_{-\underline h(t)}^{h(t)}\int_{\underline h(t)}^{\infty}
J_i(x-y)\underline u_i(t,x)\dy\dx,& &\hspace{-25mm}t>0,\\
&-\underline{h}(0)\ge g(T),~ \underline h(0)\le h(T);\;\;\underline u_1(0,x)\le u_1(T,x),\;\;\underline u_2(0,x)\le u_{2}(T,x),&&x\in[-\underline{h}(0),\underline{h}(0)].
 \end{aligned}\right.
 \ees
Once \eqref{4.8} is derived, we similarly can complete the step.  It is not hard to verify that \eqref{4.5} and \eqref{4.6} are still valid for small $\ep>0$. Then by following similar lines with the proof of Theorem \ref{t1.2}, we immediately verify \eqref{4.8}. The details are omitted. The proof is complete.
\end{proof}
On the other hand, noticing that the growth rate of infectious agents is of concave nonlinearity, Hsu and Yang \cite{HY} recently proposed the following variation of model \eqref{4.1}
\bes\label{4.a}
\left\{\begin{aligned}
&\partial_tu_1=d_1\Delta u_1-au_1+H(u_2), & &t>0,~x\in\Omega,\\
&\partial_tu_2=d_2\Delta u_2-bu_2+G(u_1), & &t>0,~x\in\Omega,
\end{aligned}\right.
 \ees
 where $H(u_2)$ and $G(u_1)$ satisfy that $H,G\in C^2([0,\yy))$, $H(0)=G(0)=0$, $H',G'>0$ in $[0,\yy)$, $H^{''},G^{''}>0$ in $(0,\yy)$, and $G(H(\hat z)/a)<b\hat{z}$ for some $\hat{z}$.
Examples for such $H$ and $G$ are $H(z)=\alpha z/(1+z)$ and $G(z)=\beta \ln(z+1)$ with $\alpha,\beta>0$ and $\alpha\beta>ab$. Based on the above assumptions, it is easy to show that if $0<H'(0)G'(0)/(ab)\le1$, the unique nonnegative constant equilibrium is $(0,0)$, and if $H'(0)G'(0)/(ab)>1$, there are only two nonnegative constant equilibria, i.e., $(0,0)$ and $(u^*_1,u^*_2)\succ{\bf0}$.
Some further results about \eqref{4.a} can be seen from \cite{HY} and \cite{WH}. 

Motivated by the above works, Nguyen and Vo \cite{NV} very recently incorporated nonlocal diffusion and free boundary into model \eqref{4.a}, and thus obtained the following problem
\bes\left\{\begin{aligned}\label{4.b}
&\partial_tu_1=d_1\int_{g(t)}^{h(t)}J_1(x-y)u_1(t,y)\dy-d_1u_1-au_1+H(u_2), & &t>0,~x\in(g(t),h(t)),\\
&\partial_tu_2=d_2\int_{g(t)}^{h(t)}J_2(x-y)u_2(t,y)\dy-d_2u_2-bu_2+G(u_1), & &t>0,~x\in(g(t),h(t)),\\
&u_i(t,g(t))=u_i(t,h(t))=0,& &t>0, ~ i=1,2,\\
&g'(t)=-\sum_{i=1}^{2}\mu_i\int_{g(t)}^{h(t)}\int_{-\yy}^{g(t)}
J_i(x-y)u_i(t,x)\dy\dx,& &t>0,\\
&h'(t)=\sum_{i=1}^{2}\mu_i\int_{g(t)}^{h(t)}\int_{h(t)}^{\infty}
J_i(x-y)u_i(t,x)\dy\dx,& &t>0,\\
&-g(0)=h(0)=h_0>0;\;\;u_1(0,x)=u_{10}(x),\;\;u_2(0,x)=u_{20}(x),&&x\in[-h_0,h_0].
 \end{aligned}\right.
 \ees
 They proved that problem \eqref{4.b} has a unique global solution, and its dynamics are also governed by a spreading-vanishing dichotomy. Now we give more accurate estimates on longtime behaviors of the solution to \eqref{4.b}. Assume $H'(0)G'(0)/(ab)>1$ and define
 \[F(u_1,u_2)=(f_1(u_1,u_2),f_2(u_1,u_2))=(-au_1+H(u_2),-bu_2+G(u_1)) ~ ~ {\rm and } ~ ~ {\bf u^*}=(u^*_1,u^*_2).\]
 One can easily check that {\bf (f1)}-{\bf(f6)} hold with ${\bf \hat{u}}={\bf\yy}$. Thus Theorem \ref{t1.1} and Theorem \ref{t1.2} are valid for solution of problem \eqref{4.b}. Here for convenience of readers, we list the results as below.
 \begin{theorem}\label{t4.a}Let $(u_1,u_2,g,h)$ be a solution of \eqref{4.b} and $m_0=m=2$ in conditions {\bf (J1)} and ${\bf(J^\gamma)}$. If spreading happens, then
 \bess\left\{\begin{aligned}
&\hspace{-2mm}\lim_{t\to\yy}\max_{|x|\le ct}[|u_1(t,x)-u^*_1|+|u_2(t,x)-u^*_2|]=0 ~ {\rm for ~ any ~ } c\in(0,c_0) ~ ~ {\rm if ~ {\bf(J1)} ~ holds},\\
&\hspace{-2mm}\lim_{t\to\yy}\max_{|x|\le ct}[|u_1(t,x)-u^*_1|+|u_2(t,x)-u^*_2|]=0 ~ {\rm for ~ any ~ } c>0 ~ ~ {\rm if ~ {\bf(J1)} ~ does ~ not ~ hold},\\
&\hspace{-2mm}\lim_{t\to\yy}\max_{|x|\le s(t)}[|u_1(t,x)-u^*_1|+|u_2(t,x)-u^*_2|]=0  ~ {\rm for ~ any } ~s(t)=t^{\frac{1}{\gamma-1}}o(1) \; {\rm if ~ {\bf(J^\gamma)} ~ holds ~ for } ~\gamma\in(1,2),\\
&\hspace{-2mm}\lim_{t\to\yy}\max_{|x|\le s(t)}[|u_1(t,x)-u^*_1|+|u_2(t,x)-u^*_2|]=0 \; {\rm for ~ any \; } s(t)=(t\ln t) o(1) \; {\rm if ~ {\bf(J^\gamma)} ~ holds ~ for } ~\gamma=2.
\end{aligned}\right.\eess
where $c_0$ is uniquely determined by the semi-wave problem \eqref{1.3}-\eqref{1.4} with $m_0=m=2$.
 \end{theorem}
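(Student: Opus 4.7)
The plan is to reduce Theorem \ref{t4.a} to Theorems \ref{t1.1} and \ref{t1.2} by verifying that the vector field $F(u_1,u_2)=(-au_1+H(u_2),\,-bu_2+G(u_1))$ satisfies all the structural hypotheses {\bf (f1)}--{\bf (f6)} with ${\bf \hat{u}}={\bf\yy}$. Once these are checked, no fresh dynamical argument is needed: the four asymptotic estimates are the literal specializations of Theorems \ref{t1.1} and \ref{t1.2} to $m_0=m=2$.

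First I would verify {\bf (f1)} and {\bf (f2)}. For {\bf (f1)}(i), setting $F=0$ gives $u_1=H(u_2)/a$ and the scalar equation $\Phi(u_2):=G(H(u_2)/a)-bu_2=0$; using concavity of $H,G$ with $H(0)=G(0)=0$, the map $\Phi$ is concave on $u_2>0$ with $\Phi'(0)=H'(0)G'(0)/a-b>0$, and $\Phi(\hat z)<0$ for some $\hat z$, so there is a unique positive root $u_2^*$ and hence a unique positive equilibrium $(u_1^*,u_2^*)$. Cooperativity is immediate from $H',G'>0$, and the off-diagonal matrix $\nabla F({\bf 0})$ is irreducible with positive principal eigenvalue whenever $H'(0)G'(0)>ab$. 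Item {\bf (f1)}(iv) is vacuous because $m_0=m$. For {\bf (f2)}, concavity of $H,G$ with vanishing at the origin gives $H(ku_2)\ge kH(u_2)$ and $G(ku_1)\ge kG(u_1)$ for $k\in[0,1]$, so $F(ku)\succeq kF(u)$.

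For {\bf (f3)} and {\bf (f6)}, the central tool is strict concavity of $H$ and $G$ combined with the equilibrium identities $au_1^*=H(u_2^*)$ and $bu_2^*=G(u_1^*)$. Concavity at $u^*$ yields $H(u_2^*)>u_2^*H'(u_2^*)$ and $G(u_1^*)>u_1^*G'(u_1^*)$, which give $\sum_{j}\partial_j f_i({\bf u^*})u_j^*<0$ for each $i$, invertibility of $\nabla F({\bf u^*})$ (its determinant $ab-H'(u_2^*)G'(u_1^*)$ is strictly positive by the same inequalities), and the matrix inequality in {\bf (f3)}. Concavity at the origin gives $H(u_2^*)<H'(0)u_2^*$ and $G(u_1^*)<G'(0)u_1^*$, so $\sum_j\partial_jf_i({\bf 0})u_j^*>0$. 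Finally, for $\eta\in(0,1)$ strict concavity produces $H(\eta u_2^*)>\eta H(u_2^*)=\eta a u_1^*$, so $f_1(\eta{\bf u^*})>0$; symmetrically $f_2(\eta{\bf u^*})>0$. This settles {\bf (f6)}.

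The items {\bf (f4)} and {\bf (f5)} are standard for two-dimensional cooperative systems whose reaction is strictly concave along rays through the origin. For {\bf (f4)}, monotone dynamical systems theory applied to $\bar u_t=F(\bar u)$, together with the facts that ${\bf 0}$ and ${\bf u^*}$ are the only equilibria and the former is unstable, forces every positive solution to converge to ${\bf u^*}$. For {\bf (f5)}, I would sandwich any nontrivial initial datum between a small positive subsolution of the form $\eta{\bf u^*}$ (justified by $f_i(\eta{\bf u^*})>0$) and a large constant supersolution, then apply comparison together with the ODE convergence from {\bf (f4)} to conclude local uniform convergence to ${\bf u^*}$, following the strategy of \cite{DNwn}. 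I expect the most delicate point to be {\bf (f5)}, since it concerns the spatial nonlocal Cauchy problem rather than pure ODE dynamics; however, once the positive subsolution is produced from {\bf (f6)}, the remaining work is clean comparison, so no essentially new obstacle arises, and Theorem \ref{t4.a} then reads off from Theorems \ref{t1.1}--\ref{t1.2}.
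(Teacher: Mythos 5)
Your approach coincides with the paper's: the paper simply records that $F(u_1,u_2)=(-au_1+H(u_2),-bu_2+G(u_1))$ satisfies {\bf (f1)}--{\bf (f6)} with $\hat{\bf u}=\infty$ and then invokes Theorems \ref{t1.1} and \ref{t1.2}, which is exactly what you carry out, only with the verifications spelled out. (One small remark: the paper's stated hypothesis $H'',G''>0$ is evidently a misprint for $H'',G''<0$ --- the given examples $H(z)=\alpha z/(1+z)$, $G(z)=\beta\ln(1+z)$ are strictly concave, and your use of concavity in checking {\bf (f2)}, {\bf (f3)} and {\bf (f6)} is the correct reading; under literal convexity {\bf (f2)} would fail.)
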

 
{\bf Example 2.} Our second example is the following West Nile virus model with nonlocal diffusion and free boundaries
\bes\left\{\begin{aligned}\label{4.9}
&H_t=d_1\int_{g(t)}^{h(t)}J_1(x-y)H(t,y)\dy-d_1H+a_1(e_1-H)V-b_1H, & &t>0,~x\in(g(t),h(t)),\\
&V_t=d_2\int_{g(t)}^{h(t)}J_2(x-y)V(t,y)\dy-d_2V+a_2(e_2-V)H-b_2V, & &t>0,~x\in(g(t),h(t)),\\
&H(t,x)=V(t,x)=0,& &t>0, ~ x\in\{g(t),h(t)\},\\
&g'(t)=-\mu\int_{g(t)}^{h(t)}\int_{-\yy}^{h(t)}
J_2(x-y)H(t,x)\dy\dx,& &t>0,\\
&h'(t)=\mu\int_{g(t)}^{h(t)}\int_{h(t)}^{\infty}
J_2(x-y)H(t,x)\dy\dx,& &t>0,\\
&-g(0)=h(0)=h_0>0;\;\;H(0,x)=u_{10}(x),\;\;V(0,x)=u_{20}(x),&&x\in[-h_0,h_0],
 \end{aligned}\right.
 \ees
 where $J_i$ satisfy {\bf(J)}. $d_i,a_i,b_i,e_i$ and $\mu$ are positive constant. $H(t,x)$ and $V(t,x)$ are the densities of the infected bird (host) and mosquito (vector)
populations, respectively. Since model \eqref{4.9} is a simplified model, the biological interpretation of this model can be referred to the literatures \cite{ALZ,LRD,LH,WHD}. Moreover, the dynamics of this model have been recently studied in \cite{DNwn}. The authors proved that the dynamics of \eqref{4.9} are governed by the spreading-vanishing dichotomy

  \sk{\rm(i)}\, \underline{Spreading:} $\lim_{t\to\yy}-g(t)=\lim_{t\to\yy}h(t)=\yy$ (necessarily $\mathcal{R}_0=\sqrt\frac{a_1a_2e_1e_2}{b_1b_2}>1$) and
\[\lim_{t\to\yy}(H(t,x),V(t,x))=(\frac{a_1a_2e_1e_2-b_1b_2}{a_1a_2e_2+b_1a_2},\frac{a_1a_2e_1e_2-b_1b_2}{a_1a_2e_1+a_1b_2}) ~ ~ {\rm locally ~ uniformly ~ in~} \mathbb{R}.\]

\sk{\rm(ii)}\, \underline{Vanishing:}  $\lim_{t\to\yy}\big(h(t)-g(t)\big)<\yy$ and \[\lim_{t\to\yy}\bigg(\|H(t,\cdot)\|_{C([g(t),h(t)])}+\|V(t,\cdot)\|_{C([g(t),h(t)])}\bigg)=0.\]

  Assume that spreading happens (necessarily $a_1a_2e_1e_2>b_1b_2$). Thus we have
 \bess
 &F(u)=(f_1(u_1,u_2),f_2(u_1,u_2))=((a_1(e_1-u_1)u_2-b_1u_1,a_2(e_2-u_2)u_1-b_2u_2)\\
&{\bf u^*}=(u^*_1,u^*_2)=(\frac{a_1a_2e_1e_2-b_1b_2}{a_1a_2e_2+b_1a_2},\frac{a_1a_2e_1e_2-b_1b_2}{a_1a_2e_1+a_1b_2}).
\eess
 One easily checks that conditions ${\bf (f_1)}-{\bf (f_6)}$ hold for $F$ with ${\bf \hat{u}}=(e_1,e_2)$. Hence the more accurate longtime behaviors of solution to \eqref{4.9} can be summarized as follows.
 \begin{theorem}\label{t4.2}Let $(H,V,g,h)$ be a solution of \eqref{4.9} and $m_0=m=2$ in conditions {\bf (J1)} and ${\bf(J^\gamma)}$. If spreading happens, then
 \bess\left\{\begin{aligned}
&\hspace{-2mm}\lim_{t\to\yy}\max_{|x|\le ct}[|H(t,x)-u^*_1|+|V(t,x)-u^*_2|]=0 ~ {\rm for ~ any ~ } c\in(0,c_0) ~ ~ {\rm if ~ {\bf(J1)} ~ holds},\\
&\hspace{-2mm}\lim_{t\to\yy}\max_{|x|\le ct}[|H(t,x)-u^*_1|+|V(t,x)-u^*_2|]=0 ~ {\rm for ~ any ~ } c>0 ~ ~ {\rm if ~ {\bf(J1)} ~ does ~ not ~ hold},\\
&\hspace{-2mm}\lim_{t\to\yy}\max_{|x|\le s(t)}[|H(t,x)-u^*_1|+|V(t,x)-u^*_2|]=0  ~ {\rm for ~ any } ~s(t)=t^{\frac{1}{\gamma-1}}o(1) \; {\rm if ~ {\bf(J^\gamma)} ~ holds ~ for } ~\gamma\in(1,2),\\
&\hspace{-2mm}\lim_{t\to\yy}\max_{|x|\le s(t)}[|H(t,x)-u^*_1|+|V(t,x)-u^*_2|]=0 \; {\rm for ~ any \; } s(t)=(t\ln t) o(1) \; {\rm if ~ {\bf(J^\gamma)} ~ holds ~ for } ~\gamma=2.
\end{aligned}\right.\eess
where $c_0$ is uniquely determined by the semi-wave problem \eqref{1.3}-\eqref{1.4} with $m_0=m=2$.
 \end{theorem}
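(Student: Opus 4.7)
The strategy is to reduce Theorem \ref{t4.2} entirely to the general Theorem \ref{t1.1} and Theorem \ref{t1.2} by verifying that the reaction term
\[
F(u_1,u_2)=\bigl(a_1(e_1-u_1)u_2-b_1u_1,\; a_2(e_2-u_2)u_1-b_2u_2\bigr)
\]
together with the invariant set $[{\bf 0},{\bf\hat u}]$ with ${\bf\hat u}=(e_1,e_2)$ and the positive equilibrium ${\bf u^*}=(u^*_1,u^*_2)$ identified in the theorem, satisfies assumptions {\bf(f1)}--{\bf(f6)}. Once this verification is in place, the four conclusions are immediate: the first two follow from Theorem \ref{t1.1} (applied with $m=m_0=2$), and the last two follow from Theorem \ref{t1.2}, since all the relevant hypotheses ({\bf(J1)}, ${\bf(J^\gamma)}$, spreading, and {\bf(f6)} in the $\gamma$-case) have been put in.

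The first step is to check the structural assumptions. For {\bf(f1)}(i) one solves $F=0$ in $\mathbb{R}^2_+$ algebraically and sees that under $a_1a_2e_1e_2>b_1b_2$ there are exactly the two roots $\bf 0$ and $\bf u^*$. For {\bf(f1)}(ii), note $\partial_2 f_1=a_1(e_1-u_1)\ge 0$ on $[{\bf0},{\bf\hat u}]$ and similarly $\partial_1 f_2\ge 0$, so the cooperative structure holds on $[{\bf 0},{\bf\hat u}]$. For {\bf(f1)}(iii) the matrix $\nabla F({\bf 0})=\left(\begin{smallmatrix}-b_1 & a_1e_1\\ a_2e_2 & -b_2\end{smallmatrix}\right)$ is clearly irreducible and has positive principal eigenvalue iff $a_1a_2e_1e_2>b_1b_2$, which is exactly the spreading assumption $\mathcal R_0>1$. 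For {\bf(f1)}(iv) there is nothing to prove since $m_0=m$. To verify {\bf(f2)}, a direct computation gives
\[
f_1(ku_1,ku_2)-kf_1(u_1,u_2)=k(1-k)a_1u_1u_2\ge 0,
\]
and analogously for $f_2$, so $F(ku)\succeq kF(u)$.

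The next step is the three equilibrium-type conditions {\bf(f3)}, {\bf(f4)} and {\bf(f6)}. Direct computation at $\bf u^*$ using the identities $a_1(e_1-u^*_1)u^*_2=b_1u^*_1$ and $a_2(e_2-u^*_2)u^*_1=b_2u^*_2$ yields
\[
\sum_{j=1}^2\partial_jf_1({\bf u^*})u^*_j=-a_1u^*_1u^*_2-b_1u^*_1<0,\qquad \sum_{j=1}^2\partial_jf_2({\bf u^*})u^*_j=-a_2u^*_1u^*_2-b_2u^*_2<0,
\]
which delivers both the strict negativity part of {\bf(f3)}(i) and half of {\bf(f6)}; the positivity $\sum_j\partial_jf_i({\bf 0})u^*_j>0$ in {\bf(f6)} comes from the identity $\nabla F({\bf 0}){\bf u^*}^T\succ 0$, which is just a rewriting of the two equations defining $\bf u^*$. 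The invertibility of $\nabla F({\bf u^*})$ is immediate by expanding the determinant and using the above identities. The auxiliary positivity $f_i(\eta{\bf u^*})>0$ for $\eta\in(0,1)$ in {\bf(f6)} follows from {\bf(f2)} together with $F({\bf u^*})={\bf 0}$ and the cooperativity: indeed, $f_i(\eta{\bf u^*})\ge \eta f_i({\bf u^*})+$ strictly positive terms from the concavity-type inequality already displayed. Finally, {\bf(f4)} and the attractivity part of {\bf(f5)} on $[{\bf 0},{\bf\hat u}]$ are already known for this West Nile system: the ODE phase portrait is two-dimensional cooperative with a unique positive equilibrium, so every trajectory starting from $\bar u(0)\succ {\bf 0}$ converges to $\bf u^*$; and the attractivity for the Cauchy problem \eqref{1.2} with initial data in $[{\bf 0},{\bf\hat u}]$ is essentially contained in the arguments of \cite{DNwn} (or alternatively can be obtained by squeezing between the ODE upper solution and a compactly supported lower solution, using comparison for nonlocal cooperative systems).

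With {\bf(f1)}--{\bf(f6)} in hand, Theorem \ref{t1.1} and Theorem \ref{t1.2} apply verbatim, giving the four claimed asymptotics. The only delicate point in the verification chain is confirming the global attractivity {\bf(f5)} on $[{\bf 0},{\bf\hat u}]$ for the Cauchy problem, since this is not purely algebraic; this is the expected main obstacle, but it is precisely the content of the spreading-alternative analysis performed for \eqref{4.9} in \cite{DNwn}, and may be invoked directly.
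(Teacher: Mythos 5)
Your proposal takes exactly the same route as the paper: the paper simply asserts that one easily checks {\bf(f1)}--{\bf(f6)} hold for the West Nile $F$ with ${\bf\hat u}=(e_1,e_2)$, cites \cite{DNwn} for the spreading-vanishing dichotomy and the Cauchy-problem attractivity, and then applies Theorem \ref{t1.1} and Theorem \ref{t1.2}; you have filled in the verification details that the paper leaves implicit. One small arithmetic slip: using the equilibrium identity $a_1(e_1-u^*_1)u^*_2=b_1u^*_1$, the sum $\sum_{j}\partial_jf_1({\bf u^*})u^*_j=(-a_1u^*_2-b_1)u^*_1+a_1(e_1-u^*_1)u^*_2$ reduces to $-a_1u^*_1u^*_2$ (the $-b_1u^*_1$ cancels with $+b_1u^*_1$), not $-a_1u^*_1u^*_2-b_1u^*_1$; the sign conclusion is unchanged, so this does not affect the argument.
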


\end{document}